\renewcommand{\le}{\leqslant}
\renewcommand{\ge}{\geqslant}
\DeclareMathOperator{\Dom}{Dom}
\DeclareMathOperator{\meas}{meas}
\DeclareMathOperator{\rank}{rank}
\DeclareMathOperator{\Ran}{Ran}
\DeclareMathOperator{\Ker}{Ker}
\DeclareMathOperator{\clos}{clos}
\DeclareMathOperator{\esssup}{ess-sup}
\DeclareMathOperator{\const}{const}
\titleformat{\part}[hang]{ \large\bfseries}{Глава~\thepart.}{0.5ex}{\centering}[]
\titleformat{\section}[hang]{ \scshape\bfseries}{\S\thesection.}{0.5ex}{\centering}[]
\titleformat{\subsection}[runin]{\bfseries}{\thesubsection.}{0.5ex}{}[.]
\numberwithin{equation}{section}
\theoremstyle{plain}
\newtheorem{thrm}{\scshape Теорема}[section]
\newtheorem{proposition}[thrm]{\scshape Предложение}
\newtheorem{lemma}[thrm]{\scshape Лемма}
\newtheorem{corollary}[thrm]{\scshape Следствие}
\newtheorem{remark}[thrm]{\scshape Замечание}
\newtheorem{condition}[thrm]{\scshape Условие}
\theoremstyle{definition}
\newtheorem{example}[thrm]{\scshape Пример}
\newtheoremstyle{break}        
{}{}%
{\itshape}{}%
{\bfseries}{}
{\newline}{}
\theoremstyle{break}
\theoremstyle{break}
\providecommand{\keywords}[1]{\textbf{{Ключевые слова:}} #1.}
\title{Операторные оценки погрешности при усреднении нестационарного уравнения Шрёдингера:\\зависимость от времени\footnote{Работа выполнена при поддержке гранта РНФ \textnumero 17-11-01069.}}
\date{}
\author{Дородный~М.~А.\footnote{Санкт-Петербургский государственный университет, Россия, 199034, Санкт-Петербург, Университетская~наб., д.~7/9; e-mail: \texttt{mdorodni@yandex.ru}.}}
\begin{document}
	
\clubpenalty = 10000
\widowpenalty = 10000

\hyphenation{мат-ри-чно-знач-ная}
\hyphenation{пе-рио-ди-чес-кие}
\hyphenation{пе-рио-ди-чес-кую}
\hyphenation{пе-рио-ди-чес-кое}
   
\maketitle

\begin{abstract}
	\noindent
	В $L_2 (\mathbb{R}^d; \mathbb{C}^n)$ рассматривается самосопряжённый матричный эллиптический дифференциальный оператор $\mathcal{A}_\varepsilon$ второго порядка с периодическими коэффициентами, зависящими от $\mathbf{x}/\varepsilon$. Для операторной экспоненты $e^{-i \tau \mathcal{A}_\varepsilon}$, $\tau \in \mathbb{R}$, при малом $\varepsilon$ получены аппроксимации по ($H^s \to L_2$)-операторной норме при подходящем $s$. Обсуждается точность полученных оценок в зависимости от $\tau$. Результаты применяются к вопросу о поведении решения $\mathbf{u}_\varepsilon$ задачи Коши для нестационарного уравнения типа Шрёдингера $i \partial_{\tau} \mathbf{u}_\varepsilon =  \mathcal{A}_\varepsilon \mathbf{u}_\varepsilon + \mathbf{F}$.
\end{abstract}

\keywords{периодические дифференциальные операторы, нестационарные уравнения типа Шрёдингера, усреднение, эффективный оператор, операторные оценки погрешности}

\part*{Введение}
Работа относится к теории усреднения (гомогенизации) периодических дифференциальных операторов (ДО). Задачам усреднения в пределе малого периода посвящена обширная литература; укажем в первую очередь книги~\cite{BeLP,BaPa,ZhKO}. Один из методов изучения задач усреднения в $\mathbb{R}^d$~--- это спектральный метод, основанный на теории Флоке--Блоха. См., например,~\cite[глава~4]{BeLP}, \cite[глава~2]{ZhKO}, \cite{Se1981}, \cite{COrVa}, \cite{APi}.

\subsection{Класс операторов}
Пусть $\Gamma$~--- решётка в $\mathbb{R}^d$, $\Omega$~--- элементарная ячейка решётки~$\Gamma$. Для $\Gamma$-периодических функций в $\mathbb{R}^d$ используется обозначение $\varphi^{\varepsilon}(\mathbf{x}) \coloneqq \varphi(\varepsilon^{-1}\mathbf{x})$, $\varepsilon>0$. В $L_2 (\mathbb{R}^d; \mathbb{C}^n)$ рассматриваются самосопряжённые эллиптические матричные ДО второго порядка следующего вида
\begin{equation}
\label{intro_A_eps}
\mathcal{A}_{\varepsilon} = (f^{\varepsilon}(\mathbf{x}))^* b(\mathbf{D})^* g^{\varepsilon}(\mathbf{x}) b(\mathbf{D}) f^{\varepsilon}(\mathbf{x}).
\end{equation}
Здесь $b(\mathbf{D})$~--- однородный матричный ДО первого порядка с постоянными коэффициентами. Предполагается, что символ  $b(\boldsymbol{\xi})$~--- ($m \times n$)-матрица ранга $n$ (считаем, что $m \ge n$). Далее, $g(\mathbf{x})$~--- $\Gamma$-периодическая ограниченная и положительно определённая ($m \times m$)-матрица-функция, $f(\mathbf{x})$~--- $\Gamma$-периодическая ограниченная вместе со своей обратной ($n \times n$)-матрица-функция.

Целесообразно первоначально изучать более узкий класс операторов вида
\begin{equation}
\label{intro_hatA_eps}
\widehat{\mathcal{A}}_{\varepsilon} = b(\mathbf{D})^* g^{\varepsilon}(\mathbf{x}) b(\mathbf{D}),
\end{equation}
отвечающий случаю $f = \mathbf{1}_n$. Многие операторы математической физики допускают запись в виде~(\ref{intro_A_eps}) или~(\ref{intro_hatA_eps}), см., например,~\cite[гл.~4]{BSu2005-2}. Простейший пример~--- оператор акустики $\widehat{\mathcal{A}}_{\varepsilon} = \mathbf{D}^* g^{\varepsilon}(\mathbf{x}) \mathbf{D} = -\operatorname{div} g^{\varepsilon}(\mathbf{x}) \nabla$.

\subsection{Обзор} 
В 2001 году М.~Ш.~Бирманом и Т.~А.~Суслиной (см.~\cite{BSu2001}) был предложен и развит теоретико-операторный подход к задачам усреднения в $\mathbb{R}^d$ (вариант спектрального метода), основанный на масштабном преобразовании, теории Флоке--Блоха и аналитической теории возмущений. 
С помощью этого подхода были получены так называемые~\emph{операторные оценки погрешности} в задачах усреднения.

В случае усреднения эллиптических и параболических задач этот подход  достаточно подробно разработан: укажем, в частности, работы~\cite{BSu2003, BSu2005, BSu2005-2, Su2004, Su2007, VSu2011, VSu2012}.

Другой подход  к получению операторных оценок погрешности (\textquotedblleft метод сдвига\textquotedblright) для эллиптических и параболических задач  был предложен В.~В.~Жиковым и С.~Е.~Пастуховой в работах \cite{Zh2006, ZhPas2005, ZhPas2006}.  См. также обзор~\cite{ZhPas2016}.

Операторные оценки погрешности для нестационарного уравнения типа Шрёдингера и гиперболических уравнений изучены в меньшей степени. Им были посвящены работы~\cite{BSu2008, Su2016, Su2017, DSu2018, M2017, M2019a}, а также~\cite{D2019} и \cite{M2019b}, где рассматривался более широкий класс операторов, включающих младшие члены. В операторных терминах речь идёт о поведении при малом $\varepsilon$ оператор-функций $e^{-i \tau \widehat{\mathcal{A}}_{\varepsilon}}$ и $\cos(\tau \widehat{\mathcal{A}}_{\varepsilon}^{1/2})$, $\widehat{\mathcal{A}}_{\varepsilon}^{-1/2} \sin(\tau \widehat{\mathcal{A}}_{\varepsilon}^{1/2})$,  где $\tau \in \mathbb{R}$. Остановимся подробнее на результатах для нестационарного уравнения типа Шрёдингера.
В~\cite{BSu2008} была доказана оценка 
\begin{equation}
\label{intro_exp_est_1}
\| e^{-i \tau \widehat{\mathcal{A}}_{\varepsilon}} - e^{-i \tau \widehat{\mathcal{A}}^0} \|_{H^3 (\mathbb{R}^d) \to L_2 (\mathbb{R}^d)} \le C (1 + |\tau|)\varepsilon.
\end{equation}
Здесь $\widehat{\mathcal{A}}^0 = b(\mathbf{D})^* g^0 b(\mathbf{D})$~--- \emph{эффективный оператор} c постоянной \emph{эффективной матрицей} $g^0$. Затем в работе~\cite{Su2017} (см. также~\cite{Su2016}) была подтверждена точность этой оценки относительно типа операторной нормы. С другой стороны, были найдены достаточные условия (которые формулируются в терминах спектральных характеристик оператора на краю спектра), позволяющие усилить результат и получить оценку
\begin{equation}
\label{intro_exp_est_2}
\| e^{-i \tau \widehat{\mathcal{A}}_{\varepsilon}} - e^{-i \tau \widehat{\mathcal{A}}^0} \|_{H^{2} (\mathbb{R}^d) \to L_2 (\mathbb{R}^d)} \le \mathcal{C}(1+|\tau|) \varepsilon.
\end{equation}

\subsection{Основные результаты работы}
Настоящая работа посвящена оценкам погрешности при аппроксимации операторной экспоненты; особое внимание уделяется зависимости этих оценок от времени. Мы показываем, что множитель $(1+ |\tau|)$ в оценке~(\ref{intro_exp_est_1}) нельзя улучшить (заменить на $(1 + |\tau|^\alpha)$ с $\alpha <1$) в общей ситуации. С другой стороны, мы доказываем, что оценку~(\ref{intro_exp_est_2}) (справедливую при дополнительных предположениях) можно улучшить:
\begin{equation*}
\| e^{-i \tau \widehat{\mathcal{A}}_{\varepsilon}} - e^{-i \tau \widehat{\mathcal{A}}^0} \|_{H^{2} (\mathbb{R}^d) \to L_2 (\mathbb{R}^d)} \le \check{\mathcal{C}}(1+|\tau|^{1/2}) \varepsilon.
\end{equation*}
Этот результат позволяет получать сходимость решений с квалифицированной оценкой погрешности при больших временах, а именно, при $\tau = O(\varepsilon^{-\alpha})$ с $\alpha < 2$.
Аналоги этих результатов получены и для более общего оператора~(\ref{intro_A_eps}). При этом оказывается, что удобно изучать
оператор  $f^\varepsilon e^{-i \tau \mathcal{A}_{\varepsilon}} (f^\varepsilon)^{-1}$ (операторную экспоненту, окаймлённую быстро осциллирующими множителями). 

Результаты, полученные в операторных терминах, применяются затем к вопросу о поведении решения $\mathbf{u}_\varepsilon (\mathbf{x}, \tau)$, $\mathbf{x} \in \mathbb{R}^d$, $\tau \in \mathbb{R}$, следующей задачи
\begin{equation}
\label{intro_Ahat_Cauchy_problem}
\left\{
\begin{aligned}
&i \frac{\partial \mathbf{u}_\varepsilon (\mathbf{x}, \tau)}{\partial \tau} =  (\widehat{\mathcal{A}}_{\varepsilon} \mathbf{u}_\varepsilon) (\mathbf{x}, \tau) + \mathbf{F} (\mathbf{x}, \tau), \\
& \mathbf{u}_\varepsilon (\mathbf{x}, 0) = \boldsymbol{\phi} (\mathbf{x}), 
\end{aligned}
\right.
\end{equation}
а также более общей задачи с оператором $\mathcal{A}_{\varepsilon}$.

\subsection{Метод}
Результаты получены с помощью теоретико-операторного подхода. Масштабное преобразование сводит изучение разности экспонент под знаком нормы~(\ref{intro_exp_est_1}) к изучению разности $e^{-i \tau \varepsilon^{-2} \widehat{\mathcal{A}}} - e^{-i \tau \varepsilon^{-2} \widehat{\mathcal{A}}^0}$, где $\widehat{\mathcal{A}} = b(\mathbf{D})^* g(\mathbf{x}) b(\mathbf{D})$. Затем с помощью унитарного преобразования Гельфанда оператор~$\widehat{\mathcal{A}}$ раскладывается в прямой интеграл по зависящим от квазиимпульса $\mathbf{k}$ операторам $\widehat{\mathcal{A}}(\mathbf{k})$, действующим в пространстве $L_2 (\Omega; \mathbb{C}^n)$. Следуя~\cite{BSu2003}, мы выделяем одномерный параметр $t = |\mathbf{k}|$ и рассматриваем семейство $\widehat{\mathcal{A}}(\mathbf{k})$ как квадратичный пучок по отношению к параметру $t$. При этом часть построений удаётся провести в рамках абстрактной теории операторов.  В абстрактной схеме изучается действующее в некотором гильбертовом пространстве $\mathfrak{H}$ операторное семейство $A(t)$, допускающее факторизацию вида $A(t) = X(t)^* X(t)$, где $X(t) = X_0 + t X_1$.

\subsection{Структура статьи}
Работа состоит из трёх глав. В гл.~\ref{abstr_part} (\S\S1--3) содержится необходимый абстрактный теоретико-операторный материал. В гл.~\ref{L2_operators_part} (\S\S4--9) изучаются периодические ДО, действующие в $L_2(\mathbb{R}^d;\mathbb{C}^n)$. Гл.~\ref{main_results_part} (\S\S10--11) посвящена задачам усреднения для нестационарного уравнения типа Шрёдингера. В~\S\ref{main_results_exp_section} получены основные результаты работы в операторных терминах. Затем в~\S\ref{main_results_Cauchy_section} эти результаты применяются к усреднению для задачи Коши~(\ref{intro_Ahat_Cauchy_problem}), а также более общей задачи с оператором $\mathcal{A}_{\varepsilon}$. 

\subsection{Обозначения}
Пусть $\mathfrak{H}$ и $\mathfrak{H}_{*}$ --- комплексные сепарабельные гильбертовы пространства. Символы $(\cdot, \cdot)_{\mathfrak{H}}$ и $ \| \cdot \|_{\mathfrak{H}}$ означают скалярное произведение и норму в $\mathfrak{H}$. Символ $\| \cdot \|_{\mathfrak{H} \to \mathfrak{H}_*}$ означает норму ограниченного оператора из $\mathfrak{H}$ в $\mathfrak{H}_{*}$. Иногда мы опускаем индексы, если это не ведёт к смешениям. Через $I = I_{\mathfrak{H}}$ обозначается тождественный оператор в $\mathfrak{H}$. Если $A \colon \mathfrak{H} \to \mathfrak{H}_*$~--- линейный оператор, то через $\Dom A$ и $\Ker A$ обозначаются область определения и ядро $A$, соответственно. Если $\mathfrak{N}$~--- подпространство в $\mathfrak{H}$, то $\mathfrak{N}^{\perp} \coloneqq \mathfrak{H} \ominus \mathfrak{N}$. Если $P$~--- ортогональный проектор пространства $\mathfrak{H}$ на $\mathfrak{N}$, то $P^{\perp}$~--- ортогональный проектор $\mathfrak{H}$ на $\mathfrak{N}^{\perp}$. 

Символы $\left< \cdot, \cdot \right>$ и $| \cdot |$ означают стандартные скалярное произведение и норму в $\mathbb{C}^n$; $\boldsymbol{1}_n$~--- единичная ($n \times n$)-матрица. Для $(m \times n)$-матрицы $a$ символ  $a^*$ означает эрмитово сопряжённую матрицу.
Далее, $\mathbf{x} = (x_1, \ldots , x_d) \in \mathbb{R}^d$, $i D_j = \frac{\partial}{\partial x_j}$, $j = 1,\ldots, d$, $\mathbf{D} = -i \nabla = (D_1, \ldots, D_d)$. 

Классы $L_p$ функций cо значениями в $\mathbb{C}^n$, заданных в области $\mathcal{O} \subset \mathbb{R}^d$, обозначаются через $L_p (\mathcal{O}; \mathbb{C}^n)$, $1 \le p \le \infty$. Классы Соболева $\mathbb{C}^n$-значных функций в области $\mathcal{O} \subset \mathbb{R}^d$ порядка $s$ с индексом суммирования $p$ обозначаются $W^s_p (\mathcal{O}; \mathbb{C}^n)$. При $p=2$ используем обозначения $H^s (\mathcal{O}; \mathbb{C}^n)$, $s \in \mathbb{R}$. При $n=1$ пишем просто $L_p (\mathcal{O})$, $W^s_p (\mathcal{O})$, $H^s (\mathcal{O})$ и т.д., но иногда мы применяем такие обозначения и для пространств векторнозначных и матричнозначных функций.

Через $C$, $c$, $\mathcal{C}$, $\mathfrak{C}$ (возможно, с индексами и значками) обозначаются различные оценочные постоянные.

\subsection{Благодарности}
Автор выражает благодарность Т.~А.~Суслиной за полезные обсуждения и ценные советы.

\part{Абстрактная теоретико-операторная схема}
\label{abstr_part}

\section{Квадратичные операторные семейства}

\subsection{Операторы $X(t)$ и $A(t)$}  
\label{abstr_X_A_section}
Пусть $\mathfrak{H}$ и $\mathfrak{H}_{*}$~--- комплексные сепарабельные гильбертовы пространства. Предположим, что $X_{0} \colon \mathfrak{H} \to \mathfrak{H}_{*}$~--- плотно определённый и замкнутый оператор, а $X_{1} \colon \mathfrak{H} \to \mathfrak{H}_{*}$~--- ограниченный оператор. Введём замкнутый на $\Dom X_0$ оператор $X(t) \coloneqq X_0 + t X_1$, $t \in \mathbb{R}$. Рассмотрим семейство самосопряжённых операторов $A(t) \coloneqq X(t)^*  X(t)$ в $\mathfrak{H}$. Оператор $A(t)$ порождается замкнутой квадратичной формой $\| X(t) u \|^{2}_{\mathfrak{H}_*}$, $u \in \Dom X_0$. Введём обозначения $A_0 \coloneqq A(0)$, $\mathfrak{N} \coloneqq \Ker  A_0 = \Ker X_0$, $\mathfrak{N}_{*} \coloneqq \Ker X^*_0$. Предполагается выполненным следующее условие.
\begin{condition}
	Точка $\lambda_0 = 0$~--- изолированная точка спектра оператора $A_0$ и $0 < n \coloneqq \dim \mathfrak{N} < \infty$, $n \le n_* \coloneqq \dim  \mathfrak{N}_* \le \infty$.
\end{condition}

Обозначим через $d^0$ \emph{расстояние от точки $\lambda_0 = 0$ до остального спектра оператора $A_0$}. Пусть $P$ и $P_*$~--- ортопроекторы в $\mathfrak{H}$ на $\mathfrak{N}$ и в  $\mathfrak{H}_*$ на  $\mathfrak{N}_*$, соответственно. Обозначим через $F(t;[a, b])$ спектральный проектор оператора $A(t)$ для промежутка $[a,b]$ и положим $\mathfrak{F} (t;[a,b]) \coloneqq F(t;[a, b]) \mathfrak{H}$. \emph{Фиксируем число $\delta > 0$ такое, что $8 \delta < d^0$}. Будем писать $F(t)$ вместо $F(t; [0,\delta])$ и $\mathfrak{F} (t)$ вместо $\mathfrak{F}(t; [0, \delta])$. Выберем число $t^0 > 0$ так, чтобы
\begin{equation}
\label{abstr_t0}
t^0 \le \delta^{1/2} \|X_1\|^{-1}.
\end{equation}
Как показано в~\cite[гл.~1, предложение~1.2]{BSu2003}, $F(t; [0,\delta]) = F(t;[0, 3 \delta])$ и $\rank F(t; [0,\delta]) = n$ при $|t| \le t^0$.

\subsection{Операторы $Z$, $R$ и $S$}
Следуя~\cite[гл.~1, \S1]{BSu2003} и~\cite[\S1]{BSu2005}, введём операторы, которые возникают при рассмотрениях в духе теории возмущений.

Пусть $\omega \in \mathfrak{N}$ и пусть $\psi = \psi(\omega) \in \Dom X_0 \cap \mathfrak{N}^{\perp}$~--- (слабое) решение уравнения 
\begin{equation*}
X^*_0 (X_0 \psi + X_1 \omega) = 0.
\end{equation*}
Введём оператор $Z \colon \mathfrak{H} \to \mathfrak{H}$ по формуле $Zu = \psi (P u)$, $u \in \mathfrak{H}$. Далее, определим оператор $R \coloneqq X_0 Z + X_1 \colon \mathfrak{N}  \to \mathfrak{N}_*$. Оператор $R$ также может быть определён формулой $R= P_*X_1 |_{\mathfrak{N}}$. Следуя \cite[гл.~1, п.~1.3]{BSu2003}, назовём оператор $S \coloneqq R^* R \colon \mathfrak{N} \to \mathfrak{N}$ \emph{спектральным ростком} семейства $A(t)$ при $t=0$. Для ростка справедливо также представление $S = P X^*_1 P_* X_1 |_{\mathfrak{N}}$. Спектральный росток называется \emph{невырожденным}, если $\Ker S = \{0\}$.

\subsection{Операторы $Z_2$ и $R_2$}
\label{abstr_Z2_R2_section}
Нам потребуется ввести операторы $Z_2$ и $R_2$, определённые в~\cite[гл.~1, \S1]{VSu2011}.

Пусть $\omega \in \mathfrak{N}$ и пусть $\phi = \phi(\omega) \in \Dom X_0 \cap \mathfrak{N}^{\perp}$~--- (слабое) решение уравнения 
\begin{equation*}
X^*_0 (X_0 \phi + X_1 Z \omega) = -P^{\perp} X_1^* R \omega.
\end{equation*}
Правая часть этого уравнения принадлежит $\mathfrak{N}^{\perp} = \Ran X_0^*$, поэтому выполнено условие разрешимости.
Определим оператор $Z_2 \colon \mathfrak{H} \to \mathfrak{H}$ по формуле $Z_2 u = \phi (P u)$, $u \in \mathfrak{H}$. Наконец, введём оператор $R_2 \coloneqq X_0 Z_2 + X_1 Z \colon \mathfrak{N}  \to \mathfrak{H}_*$.

\subsection{Аналитические ветви собственных значений и собственных векторов оператора $A(t)$} Согласно общей аналитической теории возмущений (см.~\cite{K}), при $|t| \le t^0$ существуют вещественно аналитические функции $\lambda_l (t)$ (ветви собственных значений) и вещественно аналитические $\mathfrak{H}$-значные функции $\varphi_l (t)$ (ветви собственных векторов), такие что $A(t) \varphi_l(t) = \lambda_l (t) \varphi_l(t)$, $l = 1, \ldots, n$, причём набор $\varphi_l (t)$, $l = 1, \ldots, n$, образует \emph{ортонормированный базис} в $\mathfrak{F}(t)$. Более того, для \emph{достаточно малого} $t_*$ (где $0 < t_* \le t^0$) при $|t| \le t_*$ имеют место сходящиеся степенные разложения
\begin{align}
\label{abstr_A(t)_eigenvalues_series}
\lambda_l(t) &= \gamma_l t^2 + \mu_l t^3 + \nu_l t^4 + \ldots, & &\gamma_l \ge 0, \; \mu_l, \nu_l \in \mathbb{R}, \qquad  l = 1, \ldots, n, \\ 
\label{abstr_A(t)_eigenvectors_series}
\varphi_l (t) &= \omega_l + t \psi_l^{(1)} + t \psi_l^{(2)} + \ldots, & &l = 1, \ldots, n.
\end{align}
При этом элементы $\omega_l =  \varphi_l (0), \, l = 1, \ldots, n,$ образуют ортонормированный базис в подпространстве $\mathfrak{N}$.
В~\cite[гл.~1, \S1]{BSu2003} и~\cite[\S1]{BSu2005} было установлено, что $\widetilde{\omega}_l = \psi_l^{(1)} - Z \omega_l \in \mathfrak{N}$, 
\begin{alignat}{2}
\label{abstr_S_eigenvectors}
S \omega_l = \gamma_l \omega_l&, \qquad  &&l = 1, \ldots, n,\\
\label{abstr_tilde_omega_rel}
(\widetilde{\omega}_j, \omega_k ) + (\omega_j, \widetilde{\omega}_k) = 0&, \qquad &&j, k = 1,\ldots, n.
\end{alignat}
Таким образом, \emph{числа $\gamma_l$ и элементы $\omega_l$, определённые в~\emph{(\ref{abstr_A(t)_eigenvalues_series})} и~\emph{(\ref{abstr_A(t)_eigenvectors_series})}, являются собственными для ростка $S$}. Справедливы представления $P = \sum_{l=1}^{n} (\cdot, \omega_l) \omega_l$, $SP = \sum_{l=1}^{n} \gamma_l (\cdot, \omega_l) \omega_l$.

\subsection{Пороговые аппроксимации}
Следующие утверждения были получены в~\cite[гл.~1, теоремы~4.1 и~4.3]{BSu2003} и~\cite[теорема~4.1]{BSu2005}. Договоримся ниже через $\beta_j$ обозначать абсолютные константы (значения которых допускают явный контроль), причём считаем $\beta_j \ge 1$.
\begin{thrm}[\cite{BSu2003}] 
	В условиях п.~\emph{\ref{abstr_X_A_section}} при $|t| \le t^0$ справедливы оценки
	\begin{align}
	\label{abstr_F(t)_threshold}
	\| F(t) - P \| &\le C_1 |t|, & C_1 &= \beta_1 \delta^{-1/2} \| X_1 \|,\\
	\notag
	\| A(t)F(t) - t^2 SP \| &\le C_2 |t|^3, & C_2 &= \beta_2 \delta^{-1/2}\| X_1 \|^3.
	\end{align}
\end{thrm}
\begin{thrm}[\cite{BSu2005}]
	\label{abstr_threshold_approx_thrm_2} 
	В условиях п.~\emph{\ref{abstr_X_A_section}} при $|t| \le t^0$ справедливо представление
	\begin{equation*}
	A(t) F(t) = t^2 SP + t^3 K + \Xi (t), \qquad \| \Xi (t) \| \le C_3 t^4, \qquad C_3 = \beta_3 \delta^{-1}\| X_1 \|^4.
	\end{equation*}
	Здесь оператор $K$ допускает представление $K = K_0 + N = K_0 + N_0 + N_*$, где $K_0$ переводит $\mathfrak{N}$ в $\mathfrak{N}^{\perp}$ и $\mathfrak{N}^{\perp}$ в $\mathfrak{N}$, а $N = N_0 + N_*$ переводит $\mathfrak{N}$ в себя и $\mathfrak{N}^{\perp}$ в $\{ 0 \}$. В терминах коэффициентов степенных разложений операторы $K_0$, $N_0$, $N_*$ имеют вид $K_0 = \sum_{l=1}^{n} \gamma_l \left( (\cdot, Z \omega_l) \omega_l + (\cdot, \omega_l) Z \omega_l \right)$, 
	\begin{equation}
	\label{abstr_N0_N*}
	N_0 = \sum_{l=1}^{n} \mu_l (\cdot, \omega_l) \omega_l, \qquad N_* = \sum_{l=1}^{n} \gamma_l \left( (\cdot, \widetilde{\omega}_l) \omega_l + (\cdot, \omega_l) \widetilde{\omega}_l\right).
	\end{equation}
	В инвариантных терминах справедливы представления $K_0 = Z S P + S P Z^*$, $
	N = Z^*X_1^* R P + (RP)^* X_1 Z$.
\end{thrm}

\begin{remark}
	\label{abstr_N_remark}	
	\begin{enumerate*}[label=\emph{\arabic*$^{\circ}.$}, ref=\arabic*$^{\circ}$]
		\item Если $Z = 0$, то $K_0 = 0$, $N = 0$ и $K = 0$.
		\item В базисе $\{\omega_l\}_{l=1}^n$ операторы $N$, $N_0$, $N_*$ \emph{(}суженные на подпространство $\mathfrak{N}$\emph{)} задаются матрицами размера $n \times n$. При этом оператор $N_0$ диагонален $(N_0 \omega_j, \omega_k ) = \mu_j \delta_{jk}$, $j, k = 1, \ldots ,n$. Матричные элементы оператора $N_*$ имеют вид $(N_* \omega_j, \omega_k) = \gamma_k (\omega_j, \widetilde{\omega}_k) + \gamma_j (\widetilde{\omega}_j, \omega_k ) = ( \gamma_j - \gamma_k)(\widetilde{\omega}_j, \omega_k )$, $j, k = 1,\ldots, n$.	Видно, что диагональные элементы для $N_*$ обращаются в ноль. Более того, $(N_* \omega_j, \omega_k) = 0$ если  $\gamma_j = \gamma_k$.
		\item Если $n = 1$, то $N_* = 0$ и $N = N_0$.
	\end{enumerate*}
\end{remark}

\subsection{Условие невырожденности}
Ниже мы предполагаем выполненным следующее дополнительное условие.
\begin{condition}
	\label{abstr_nondegeneracy_cond}
	Существует константа $c_* > 0$ такая, что $A(t) \ge c_* t^2 I$ при $|t| \le t^0$.
\end{condition}
Из условия~\ref{abstr_nondegeneracy_cond} следует, что $\lambda_l (t) \ge c_* t^2$, $l = 1, \ldots, n$, при $|t| \le t^0$. В силу~(\ref{abstr_A(t)_eigenvalues_series}) это влечёт $\gamma_l \ge c_* > 0$, $l= 1, \ldots, n$, т.~е. невырожденность спектрального ростка:
\begin{equation}
\label{abstr_S_nondegeneracy}
S \ge c_* I_{\mathfrak{N}}.
\end{equation}

\subsection{Разбиение собственных значений оператора $A(t)$ на кластеры}
\label{abstr_cluster_section}

Материал этого пункта заимствован из~\cite[раздел~2]{Su2017}. Он содержателен при $n \ge 2$.

Предположим, что выполнено условие~\ref{abstr_nondegeneracy_cond}. Сейчас нам будет удобно изменить обозначения, отслеживая кратности собственных значений оператора $S$. Пусть $p$~--- число различных собственных значений ростка $S$. Будем считать, что они занумерованы в порядке возрастания и обозначим их через $\gamma^{\circ}_j$, $j = 1, \ldots, p$. Пусть $k_1, \ldots, k_p$~--- их кратности (разумеется, $k_1 + \ldots + k_p = n$).
Введём обозначения для собственных подпространств: $\mathfrak{N}_j = \Ker(S - \gamma^{\circ}_j I_\mathfrak{N})$, $j = 1 ,\ldots, p$. Тогда $\mathfrak{N} = \sum_{j=1}^{p} \oplus \mathfrak{N}_j$.
Пусть $P_j$~--- ортопроектор пространства $\mathfrak{H}$ на $\mathfrak{N}_j$. Тогда $P = \sum_{j=1}^{p} P_j$, $P_j P_l = 0$ при $j \ne l$.

Мы разбиваем первые $n$ собственных значений оператора $A(t)$ на $p$ кластеров при $|t| \le t^0$; $j$-ый кластер состоит из собственных значений $\lambda_l (t)$, $l= i,\ldots, i+k_j-1$, где $i = i(j) = k_1+\ldots+k_{j-1}+1$. 

Для каждой пары индексов $(j, l)$, $1 \le j,l \le p$, $j \ne l$, введём обозначение
\begin{equation*}
c^{\circ}_{jl} \coloneqq \min \{c_*, n^{-1} |\gamma^{\circ}_l - \gamma^{\circ}_j|\}.
\end{equation*}
Ясно, что найдётся номер $i_0 = i_0 (j,l)$, где $j \le i_0 \le l-1$ при $j < l$ и $l \le i_0 \le j-1$ при $l < j$, такой, что $\gamma^{\circ}_{i_0 + 1} -  \gamma^{\circ}_{i_0} \ge c^{\circ}_{jl}$. Это означает, что на промежутке между $\gamma^{\circ}_j$ и $\gamma^{\circ}_l$ в спектре оператора $S$ имеется лакуна длины не меньше $c^{\circ}_{jl}$. Возможно, выбор $i_0$ неоднозначен, в этом случае договоримся брать наименьшее возможное $i_0$ (для определённости). Далее, выберем число $t^{00}_{jl} \le t^0$ так, чтобы 
\begin{equation*}
t^{00}_{jl} \le (4C_2)^{-1} c^{\circ}_{jl} = (4 \beta_2)^{-1} \delta^{1/2} \|X_1\|^{-3 } c^{\circ}_{jl}.
\end{equation*}
Пусть $\Delta^{(1)}_{jl} \coloneqq [\gamma^{\circ}_1 - c^{\circ}_{jl}/4, \gamma^{\circ}_{i_0} + c^{\circ}_{jl}/4]$ и $\Delta^{(2)}_{jl} \coloneqq [\gamma^{\circ}_{i_0+1} - c^{\circ}_{jl}/4, \gamma^{\circ}_p + c^{\circ}_{jl}/4]$. Отрезки $\Delta^{(1)}_{jl}$ и $\Delta^{(2)}_{jl}$ не пересекаются и отделены друг от друга на расстояние, не меньшее $c^{\circ}_{jl}/2$. Как показано в~\cite[раздел~2]{Su2017}, при $|t| \le  t^{00}_{jl}$ у оператора $A(t)$ в промежутке $t^2 \Delta^{(1)}_{jl}$ ровно $k_1 + \ldots + k_{i_0}$ собственных значений (с учётом кратностей) и ровно $k_{i_0+1} + \ldots + k_p$ собственных значений в промежутке $t^2 \Delta^{(2)}_{jl}$.
\begin{remark}
	Операторы $N_{0}$ и $N_{*}$ допускают инвариантное представление: 
	\begin{equation}
	\label{abstr_N0_N*_invar_repr}
	N_{0} = \sum_{j=1}^{p} P_j N P_j, \qquad N_{*} = \sum_{\substack{1 \le l,j \le p \\ j \ne l}} P_l N P_j.
	\end{equation}
\end{remark}

\subsection{Коэффициенты $\nu_l$, $l=1, \ldots,n$}
\label{abstr_nu_section}
Нам потребуется указать связь коэффициентов $\nu_l$, $l=1, \ldots,n$, с некоторой задачей на собственные числа. 

В~\cite[(1.34), (1.37)]{VSu2011} было установлено, что
\begin{align}
\notag
\psi_l^{(2)} - Z \widetilde{\omega}_l - Z_2 \omega_l \eqqcolon \widetilde{\omega}^{(2)}_l \in \mathfrak{N}&, & &l = 1, \ldots, n,\\
\label{abstr_tilde_omega^(2)_rel}
(\widetilde{\omega}^{(2)}_l, \omega_k) + (Z \omega_l, Z \omega_k) + (\widetilde{\omega}_l, \widetilde{\omega}_k) + (\omega_l, \widetilde{\omega}^{(2)}_k) = 0&, & &l, k = 1, \ldots, n.
\end{align}  
Далее, из~\cite[(2.47), формула после~(2.46)]{VSu2011} получаем
\begin{multline}
\label{abstr_nu_rel_1}
(N_1 \omega_l, \omega_k) - \mu_l (\widetilde{\omega}_l, \omega_k) - \mu_k (\omega_l, \widetilde{\omega}_k) - \gamma_l (\widetilde{\omega}^{(2)}_l, \omega_k) - \gamma_k (\omega_l, \widetilde{\omega}^{(2)}_k) - (S \widetilde{\omega}_l, \widetilde{\omega}_k) = \nu_l \delta_{lk},\\
l, k = 1, \ldots, n,
\end{multline}
где $N_1 = N_1^0 - Z^*Z S P - S P Z^*Z$, $N_1^0 = Z_2^* X_1^* R P + (RP)^* X_1 Z_2 + R_2^* R_2 P$. 

Пусть $\gamma_q^{\circ}$~--- собственное число задачи~(\ref{abstr_S_eigenvectors}) кратности $k_q$ (т.~е. $\gamma_i = \ldots = \gamma_{i+k_q-1}$, $i = i(q) = k_1+\ldots+k_{q-1}+1$). Рассмотрим задачу на собственные значения (см. замечание~\ref{abstr_N_remark}) 
\begin{equation}
\label{abstr_N_eigenvalues}
P_q N \omega_l = \mu_l \omega_l, \qquad l = i, \ldots, i+k_q-1.
\end{equation}
Будем считать, что $\mu_l$, $l = i, \ldots, i+k_q-1$, занумерованы в порядке неубывания. Обозначим количество различных собственных значений через $p'(q)$ и обозначим их кратности через $k_{1,q}, \ldots, k_{p'(q),q}$ (разумеется, $k_{1,q} + \ldots + k_{p'(q),q} = k_q$). Переобозначим различные собственные значения через $\mu^{\circ}_{j,q}$, $j = 1, \ldots, p'(q)$ и введём следующие обозначения для собственных подпространств: $\mathfrak{N}_{j,q} = \Ker(P_qN|_{\mathfrak{N}_q} - \mu^{\circ}_{j,q} I_{\mathfrak{N}_q})$, $j = 1 ,\ldots, p'(q)$. Тогда $\mathfrak{N}_q = \sum_{j=1}^{p'(q)} \oplus \mathfrak{N}_{j,q}$. Пусть $P_{j,q}$~--- ортопроектор пространства $\mathfrak{H}$ на $\mathfrak{N}_{j,q}$. Тогда $P_q = \sum_{j=1}^{p'(q)} P_{j,q}$ и $P_{j,q} P_{r,q} = 0$  при $j \ne r$.  

Пусть $\mu^{\circ}_{q',q}$~--- $k_{q',q}$-кратное собственное значение задачи~(\ref{abstr_N_eigenvalues}): $\mu_{i'} = \ldots = \mu_{i'+k_{q',q}-1}$, где $i' = i'(q',q) =i(q)+k_{1,q}+\ldots+k_{q'-1,q}$. Используя соотношения~(\ref{abstr_tilde_omega_rel}), (\ref{abstr_tilde_omega^(2)_rel}) и учитывая, что $\gamma_l = \gamma_k = \gamma^{\circ}_q$, $\mu_l = \mu_k = \mu^{\circ}_{q',q}$, $l,k = i', \ldots, i'+k_{q',q}-1$, из~(\ref{abstr_nu_rel_1}) получаем
\begin{equation}
\label{abstr_nu_rel_2}
(N_1 \omega_l, \omega_k) +  \gamma_l (Z \omega_l, Z \omega_k) + \gamma_l (\widetilde{\omega}_l, \widetilde{\omega}_k) - (S \widetilde{\omega}_l, \widetilde{\omega}_k) = \nu_l \delta_{lk}, \qquad l,k = i', \ldots, i'+k_{q',q}-1.
\end{equation}
Далее, в силу замечания~\ref{abstr_N_remark} имеем
\begin{multline*}
\gamma_l (\widetilde{\omega}_l, \widetilde{\omega}_k) - (S \widetilde{\omega}_l, \widetilde{\omega}_k) =  \sum_{l'=1}^{n} (\gamma_l - \gamma_{l'}) (\widetilde{\omega}_l,\omega_{l'}) (\omega_{l'}, \widetilde{\omega}_k) =\\= \sum_{\substack{l'\in\{1,\ldots,n\}\\l' \ne i, \ldots, i+k_q-1}} \frac{(N \omega_{l}, \omega_{l'}) ( \omega_{l'}, N\omega_{k})}{\gamma^{\circ}_q - \gamma_{l'}}   =  \sum_{\substack{j\in\{1,\ldots,p\} \\	j \ne q}} \frac{( P_{j} N\omega_l, N\omega_k )}{\gamma^{\circ}_q - \gamma^{\circ}_{j}}  \eqqcolon \mathfrak{n}_0^{(q',q)}[\omega_l,\omega_k],\\
l,k = i', \ldots, i'+k_{q',q}-1.
\end{multline*}
Уравнения~(\ref{abstr_nu_rel_2}) можно трактовать как задачу на собственные числа для оператора $\mathcal{N}^{(q',q)}$: 
\begin{equation}
\label{abstr_srcN^q_eigenvalues}
\mathcal{N}^{(q',q)} \omega_l = \nu_l \omega_l, \qquad l = i', \ldots, i'+k_{q',q}-1,
\end{equation}
 где 
\begin{equation*}
\mathcal{N}^{(q',q)} \coloneqq P_{q',q} \left. \left( N_1^0 - \frac{1}{2} Z^*Z S P - \frac{1}{2} S P Z^*Z \right)\right|_{\mathfrak{N}_{q',q}} + \mathcal{N}^{(q',q)}_0,
\end{equation*}
а $\mathcal{N}^{(q',q)}_0$~--- оператор в $\mathfrak{N}_{q',q}$, порождённый формой $\mathfrak{n}_0^{(q',q)}[\cdot,\cdot]$.

Отметим, что в случае, когда $N_0 = 0$ (что равносильно $\mu_l = 0$ для всех $l=1, \ldots,n$) выполнено $\mathfrak{N}_{1,q} = \mathfrak{N}_{q}$, $q=1,\ldots,p$. Тогда вместо $\mathcal{N}^{(1,q)}$ мы будем писать $\mathcal{N}^{(q)}$. 

\section{Аппроксимация оператора $e^{-i \tau \varepsilon^{-2} A(t)}P$}
\label{abstr_exp_section}

\subsection{Аппроксимация оператора $e^{-i \tau \varepsilon^{-2} A(t)}P$}
Пусть $\varepsilon > 0$. Исследуем поведение оператора $e^{-i \tau \varepsilon^{-2} A ( t)}$ при малом $\varepsilon$. Домножим этот оператор на \textquotedblleft сглаживающий множитель\textquotedblright \ $\varepsilon^s (t^2 + \varepsilon^2)^{-s/2}P$, где $s > 0$. (Термин объясняется тем, что в приложениях к ДО такое домножение переходит в сглаживание.) Наша цель~--- получить аппроксимацию сглаженного оператора с оценкой погрешности порядка $O (\varepsilon)$ при минимально возможном $s$.

Следующие утверждения были доказаны в~\cite[теорема~2.1]{BSu2008} и~\cite[следствия~3.3, 3.5]{Su2017}.
\begin{thrm}[\cite{BSu2008}]
	\label{abstr_exp_general_thrm_wo_eps}
	При $\tau \in \mathbb{R}$ и $|t| \le t^0$ справедлива оценка
	\begin{equation}
	\label{abstr_exp_general_est_wo_eps}
	\| e^{-i \tau A(t)} P - e^{-i \tau t^2 SP} P \| \le  2C_1 |t| + C_2 |\tau| |t|^3.
	\end{equation}
\end{thrm}
\begin{thrm}[\cite{Su2017}]
	\label{abstr_exp_enchcd_thrm_wo_eps_1}
	Пусть $N=0$. Тогда при $\tau \in \mathbb{R}$ и $|t| \le t^0$ справедлива оценка
	\begin{equation} 
	\label{abstr_exp_enchcd_est_wo_eps_1}     
	\| e^{-i \tau A(t)} P - e^{-i \tau t^2 SP} P \| \le 2C_1 |t| + C_{4} | \tau | t^4. 
	\end{equation}
	Константа $C_4$ полиномиально зависит от величин $\delta^{-1/2}$, $\|X_1\|$.
\end{thrm}
\begin{thrm}[\cite{Su2017}]
	\label{abstr_exp_enchcd_thrm_wo_eps_2}
	Пусть $N_0 = 0$. Тогда при $\tau \in \mathbb{R}$ и $|t| \le t^{00}$ справедлива оценка
	\begin{equation*}     
	\|  e^{-i \tau A(t)} P - e^{-i \tau t^2 SP} P \| \le C_{5} |t| + C_{6} | \tau | t^4. 
	\end{equation*}
	Здесь $t^{00}$ подчинено условию 
	\begin{equation}
	\label{abstr_t00}
	t^{00} \le  (4 \beta_2)^{-1} \delta^{1/2} \|X_1\|^{-3} c^{\circ},
	\end{equation}
	где 
	\begin{equation}
	\label{abstr_c^circ}
	c^{\circ} \coloneqq \min_{(j,l) \in \mathcal{Z}} c^{\circ}_{jl}, \qquad \mathcal{Z} \coloneqq \{(j,l) \colon 1 \le j,l \le p, j \ne l, P_jNP_l \ne 0\}.
	\end{equation} 
	Константы $C_5$, $C_6$ полиномиально зависят от  $\delta^{-1/2}$, $\|X_1\|$, $n$, $(c^{\circ})^{-1}$.
\end{thrm}

Применим сформулированные теоремы. Начнём с теоремы~\ref{abstr_exp_general_thrm_wo_eps}. Пусть $|t| \le t^0$. В силу~(\ref{abstr_exp_general_est_wo_eps}) (с заменой $\tau$ на $\varepsilon^{-2} \tau$) 
\begin{multline*}
\bigl\| e^{-i \tau \varepsilon^{-2} A (t)} P - e^{-i \tau \varepsilon^{-2} t^2 SP} P \bigr\| \varepsilon^3 (t^2 + \varepsilon^2)^{-3/2} \le \\ \le (2C_{1}|t| + C_{2} \varepsilon^{-2}  |\tau| |t|^3) \varepsilon^3 (t^2 + \varepsilon^2)^{-3/2} \le (C_{1} + C_{2}|\tau|) \varepsilon.
\end{multline*}
Здесь пришлось взять $s = 3$. Мы приходим к следующему результату, полученному в~\cite[теорема~2.6]{BSu2008}.
\begin{thrm}[\cite{BSu2008}]
	\label{abstr_exp_general_thrm}
	При $\tau \in \mathbb{R}$ и $|t| \le t^0$ справедлива оценка
	\begin{equation*}
	\bigl\| e^{-i \tau \varepsilon^{-2} A (t)} P - e^{-i \tau \varepsilon^{-2} t^2 SP} P \bigr\| \varepsilon^3 (t^2 + \varepsilon^2)^{-3/2} \le  (C_1 + C_2 |\tau|) \varepsilon.
	\end{equation*}
	Постоянные $C_1$, $C_2$ контролируются через многочлены от величин $\delta^{-1/2}$, $\|X_1\|$.
\end{thrm}

Теорема~\ref{abstr_exp_enchcd_thrm_wo_eps_1} позволяет усилить результат теоремы~\ref{abstr_exp_general_thrm} в случае, когда $N = 0$.
\begin{thrm}
	\label{abstr_exp_enchcd_thrm_1}
	Пусть $N = 0$. Тогда при $\tau \in \mathbb{R}$ и $|t| \le t^0$ справедлива оценка
	\begin{equation} 
	\label{abstr_exp_enchcd_est_1}   
	\bigl\| e^{-i \tau \varepsilon^{-2} A (t)} P - e^{-i \tau \varepsilon^{-2} t^2 S P}P \bigr\| \varepsilon^{2} (t^2 + \varepsilon^2)^{-1} \le  (C_1 + C'_4 |\tau|^{1/2}) \varepsilon. 
	\end{equation}    
	Постоянные $C_1$, $C'_4$ контролируются через многочлены от величин $\delta^{-1/2}$, $\|X_1\|$.
\end{thrm}
\begin{proof}
	Заметим, что при $|t| \ge \varepsilon^{1/2}/|\tau|^{1/4}$ выполнено 
	\begin{equation*}
	\frac{\varepsilon^{2}}{t^2 + \varepsilon^2} \le
	\frac{\varepsilon^{2}}{\left(\tfrac{\varepsilon^{1/2}}{|\tau|^{1/4}}\right)^2 + \varepsilon^2} = \frac{\varepsilon |\tau|^{1/2}}{1 + \varepsilon |\tau|^{1/2} } \le \varepsilon |\tau|^{1/2},
	\end{equation*}
	поэтому левая часть в~(\ref{abstr_exp_enchcd_est_1}) не превосходит $2 |\tau|^{1/2} \varepsilon$.
	Таким образом, достаточно считать, что $|t| < \varepsilon^{1/2}/|\tau|^{1/4}$. Воспользуемся неравеством~(\ref{abstr_exp_enchcd_est_wo_eps_1}) с заменой $\tau$ на $\varepsilon^{-2} \tau$. Тогда при $|t| < \varepsilon^{1/2}/|\tau|^{1/4}$ получаем оценку
	\begin{multline*}
	\bigl\| e^{-i \tau \varepsilon^{-2} A(t)} P - e^{-i \tau \varepsilon^{-2} t^2 S P} P\bigr\| \varepsilon^2 (t^2 + \varepsilon^2)^{-1} \le  \left( 2 C_1 |t| + C_4 \varepsilon^{-2} |\tau| t^4 \right) \varepsilon^{2} (t^2 + \varepsilon^2)^{-1} \le \\ \le C_1 \varepsilon + C_4 |\tau| t^2 \le C_1 \varepsilon + C_4 |\tau|^{1/2} \varepsilon.
	\end{multline*}
	В результате получаем оценку~(\ref{abstr_exp_enchcd_est_1}) с постоянной $C'_4= \max\{2,C_4\}$.
\end{proof}

Аналогично, применение теоремы~\ref{abstr_exp_enchcd_thrm_wo_eps_2} позволяет получить следующий результат.
\begin{thrm}
	\label{abstr_exp_enchcd_thrm_2}
	Пусть $N_0 = 0$. Тогда при $\tau \in \mathbb{R}$ и $|t| \le t^{00}$ справедлива оценка
	\begin{equation*} 
	\bigl\| e^{-i \tau \varepsilon^{-2} A (t)} P - e^{-i \tau \varepsilon^{-2} t^2 S P}P \bigr\| \varepsilon^{2} (t^2 + \varepsilon^2)^{-1} \le  (C_5 + C'_6 |\tau|^{1/2}) \varepsilon. 
	\end{equation*}    
	Здесь $t^{00}$ подчинено условию~\emph{(\ref{abstr_t00})}, константы $C_5$, $C'_6$ контролируются через многочлены от величин $\delta^{-1/2}$, $\|X_1\|$, $n$, $(c^{\circ})^{-1}$.
\end{thrm}

\begin{remark}
	Теоремы~\emph{\ref{abstr_exp_enchcd_thrm_1}} и~\emph{\ref{abstr_exp_enchcd_thrm_2}} усиливают результаты теорем~\emph{4.2} и~\emph{4.3} из~\emph{\cite{Su2017}} в отношении зависимости оценок от $\tau$. 
\end{remark}

\subsection{Точность результатов относительно сглаживающего множителя}
Покажем, что полученные результаты точны относительно сглаживающего множителя. Следующая теорема, доказанная в~\cite[теорема~4.4]{Su2017}, подтверждает точность теоремы~\ref{abstr_exp_general_thrm}.
\begin{thrm}[\cite{Su2017}]
	\label{abstr_exp_smooth_shrp_thrm_1}
	Пусть $N_0 \ne 0$. Пусть $\tau \ne 0$ и $0 \le s < 3$. Тогда не существует такой константы $C(\tau) >0$, чтобы оценка
	\begin{equation*}
	\bigl\| e^{-i \tau \varepsilon^{-2} A (t)} P - e^{-i \tau \varepsilon^{-2} t^2 S P} P \bigr\| \varepsilon^s (t^2 + \varepsilon^2)^{-s/2} \le  C(\tau) \varepsilon
	\end{equation*}
	выполнялась для всех достаточно малых $|t|$ и $\varepsilon$.
\end{thrm}
Далее, подтвердим точность теорем~\ref{abstr_exp_enchcd_thrm_1}, \ref{abstr_exp_enchcd_thrm_2}.
\begin{thrm}
	\label{abstr_exp_smooth_shrp_thrm_2}
	Пусть $N_0 = 0$ и $\mathcal{N}^{(q)} \ne 0$ для некоторого $q \in \{1, \ldots, p\}$. Пусть $\tau \ne 0$ и $0 \le s < 2$. Тогда не существует такой константы $C(\tau) >0$, чтобы оценка
	\begin{equation}
	\label{abstr_exp_smooth_shrp_est_2}
	\bigl\| e^{-i \tau \varepsilon^{-2} A (t)} P - e^{-i \tau \varepsilon^{-2} t^2 S P} P \bigr\| \varepsilon^s (t^2 + \varepsilon^2)^{-s/2} \le  C(\tau) \varepsilon
	\end{equation}
	выполнялась для всех достаточно малых $|t|$ и $\varepsilon$.
\end{thrm}
\begin{proof}
	Начнём с предварительных замечаний. Поскольку $F(t)^{\perp}P = (P - F(t))P$, то из~(\ref{abstr_F(t)_threshold}) вытекает оценка
	\begin{equation}
	\label{abstr_smooth_shrp_Fperp_est}
	\| e^{-i \tau \varepsilon^{-2} A (t)} F(t)^{\perp} P \| \varepsilon (t^2 + \varepsilon^2)^{-1/2} \le C_1 |t| \varepsilon (t^2 + \varepsilon^2)^{-1/2} \le C_1 \varepsilon, \qquad |t| \le t^0.
	\end{equation}
	Далее, при $ |t| \le t^0$ имеем:
	\begin{equation}
	\label{abstr_smooth_shrp_f1}
	e^{-i \tau \varepsilon^{-2} A (t)} F(t) = \sum_{l=1}^{n} e^{-i \tau \varepsilon^{-2} \lambda_l(t)} (\cdot,\varphi_l(t)) \varphi_l(t).
	\end{equation}
	Затем, из сходимости рядов~(\ref{abstr_A(t)_eigenvectors_series}) следует, что
	\begin{equation}
	\label{abstr_smooth_shrp_f2}
	\| \varphi_l(t) - \omega_l \| \le c_1 |t|, \qquad |t| \le t_*, \quad l = 1,\ldots,n.
	\end{equation}
		
	Фиксируем $0 \ne \tau \in \mathbb{R}$. Будем рассуждать от противного. Предположим, что для некоторого $1 \le s < 2$ существует константа $C(\tau) > 0$ такая, что выполнено~(\ref{abstr_exp_smooth_shrp_est_2}) при всех достаточно малых $|t|$ и $\varepsilon$. В силу~(\ref{abstr_smooth_shrp_Fperp_est})--(\ref{abstr_smooth_shrp_f2}) это предположение равносильно существованию положительной константы $\widetilde{C} (\tau)$ такой, что  выполнено
	\begin{equation}
	\label{abstr_smooth_shrp_f3}
	\left\| \sum_{l=1}^{n} \left( e^{-i \tau \varepsilon^{-2} \lambda_l(t)} - e^{-i \tau \varepsilon^{-2} t^2 \gamma_l} \right) (\cdot,\omega_l) \omega_l  \right\| \varepsilon^{s} (t^2 + \varepsilon^2)^{-s/2} \le \widetilde{C} (\tau) \varepsilon
	\end{equation}
	при всех достаточно малых $|t|$ и $\varepsilon$.
	
	Условие $N_0 = 0$ и $\mathcal{N}^{(q)} \ne 0$ для некоторого $q \in \{1, \ldots, p\}$ означает, что в разложениях~(\ref{abstr_A(t)_eigenvalues_series}) $\mu_l = 0$ при всех $l=1, \ldots,n$, и что хотя бы для одного $j$ выполнено $\nu_j \ne 0$. Применим оператор под знаком нормы в~(\ref{abstr_smooth_shrp_f3}) к элементу $\omega_j$. Тогда
	\begin{equation}
	\label{abstr_smooth_shrp_f4}
	\left| e^{-i \tau \varepsilon^{-2} \lambda_j(t)} - e^{-i \tau \varepsilon^{-2} t^2 \gamma_j} \right| \varepsilon^{s} (t^2 + \varepsilon^2)^{-s/2} \le \widetilde{C} (\tau) \varepsilon
	\end{equation}
	при всех достаточно малых $|t|$ и $\varepsilon$. Левую часть~(\ref{abstr_smooth_shrp_f4}) c учётом~(\ref{abstr_A(t)_eigenvalues_series}) можно записать в виде $
	2 \left| \sin \left( \frac{1}{2} \tau \varepsilon^{-2} (\nu_j t^4 + O(t^5)) \right) \right| \varepsilon^{s} (t^2 + \varepsilon^2)^{-s/2}$.	Будем считать, что $t_*$ достаточно мало, так что $
	\frac{1}{2} |\nu_j| |t|^4 \le |\nu_j t^4 + O(t^5)| \le \frac{3}{2} |\nu_j| |t|^4$, $|t| \le t_*$. Далее, для фиксированного $\tau \ne 0$, предполагая, что $\varepsilon$ достаточно мало (а именно, $\varepsilon \le \pi^{-1/2} |\nu_j\tau|^{1/2} t_*^2$), положим $t = t(\varepsilon) = \pi^{1/4} |\nu_j \tau|^{-1/4} \varepsilon^{1/2} = c \varepsilon^{1/2}$. Для таких $t$ справедливо $2 \left| \sin \left( \frac{1}{2} \tau \varepsilon^{-2} (\nu_j t^4 + O(t^5)) \right) \right|  \ge \sqrt{2}$, поэтому из~(\ref{abstr_smooth_shrp_f4}) следует, что $\sqrt{2} \varepsilon^{s} (c^2 \varepsilon + \varepsilon^2)^{-s/2} \le \widetilde{C} (\tau) \varepsilon$.
	Это означает, что функция $\varepsilon^{s/2-1} (c^2 + \varepsilon)^{-s/2}$ равномерно ограничена при малых $\varepsilon$. Но это неверно, если $s < 2$. Полученное противоречие завершает доказательство.
\end{proof}

\subsection{Точность результатов относительно времени}
Теперь мы докажем следующее утверждение, подтверждающее точность теоремы~\ref{abstr_exp_general_thrm} относительно времени.

\begin{thrm}
	\label{abstr_exp_time_shrp_thrm_1}
	Пусть $N_0 \ne 0$. Тогда не существует положительной функции $C(\tau)$ такой, что $\lim_{\tau \to \infty} C(\tau)/ |\tau| = 0$ и выполнена оценка
	\begin{equation}
	\label{abstr_exp_time_shrp_est_1}
	\bigl\| e^{-i \tau \varepsilon^{-2} A (t)} P - e^{-i \tau \varepsilon^{-2} t^2 S P} P \bigr\| \varepsilon^3 (t^2 + \varepsilon^2)^{-3/2} \le  C(\tau) \varepsilon
	\end{equation}
	при всех $\tau \in \mathbb{R}$ и всех достаточно малых $|t|$ и $\varepsilon > 0$.
\end{thrm}
\begin{proof}
	Будем рассуждать от противного. Предположим, что существует положительная функция $C(\tau)$ такая, что $\lim_{\tau \to \infty} C(\tau)/ |\tau| = 0$ и выполнено~(\ref{abstr_exp_time_shrp_est_1}) при всех достаточно малых $|t|$ и $\varepsilon$. В силу~(\ref{abstr_smooth_shrp_Fperp_est})--(\ref{abstr_smooth_shrp_f2}) это предположение равносильно существованию положительной функции $\widetilde{C} (\tau)$ такой, что $\lim_{\tau \to \infty} \widetilde{C}(\tau)/ |\tau| = 0$ и выполнено
	\begin{equation}
	\label{abstr_time_shrp_f1}
	\left\| \sum_{l=1}^{n} \left( e^{-i \tau \varepsilon^{-2} \lambda_l(t)} - e^{-i \tau \varepsilon^{-2} t^2 \gamma_l} \right) (\cdot,\omega_l) \omega_l  \right\| \varepsilon^{3} (t^2 + \varepsilon^2)^{-3/2} \le \widetilde{C} (\tau) \varepsilon
	\end{equation}
	при всех достаточно малых $|t|$ и $\varepsilon$.
	
	Условие $N_0 \ne 0$ означает, что хотя бы для одного $j$ выполнено $\mu_j \ne 0$. Применим оператор под знаком нормы в~(\ref{abstr_time_shrp_f1}) к элементу $\omega_j$. Тогда
	\begin{equation}
	\label{abstr_time_shrp_f2}
	\left| e^{-i \tau \varepsilon^{-2} \lambda_j(t)} - e^{-i \tau \varepsilon^{-2} t^2 \gamma_j} \right| \varepsilon^{3} (t^2 + \varepsilon^2)^{-3/2} \le \widetilde{C} (\tau) \varepsilon
	\end{equation}
	при всех достаточно малых $|t|$ и $\varepsilon$. Модуль в левой части~(\ref{abstr_time_shrp_f2}) c учётом~(\ref{abstr_A(t)_eigenvalues_series}) может быть записан следующим образом: 
	\begin{equation*}
	\left| e^{-i \tau \varepsilon^{-2} \lambda_j(t)} - e^{-i \tau \varepsilon^{-2} t^2 \gamma_j} \right| = 2 \left| \sin \left( \frac{1}{2} \tau \varepsilon^{-2} (\lambda_j(t) - t^2 \gamma_j) \right) \right| = 2 \left| \sin \left( \frac{1}{2} \tau \varepsilon^{-2} (\mu_j t^3 + O(t^4)) \right) \right|.
	\end{equation*}
	Поэтому 
	\begin{equation}
	\label{abstr_time_shrp_f3}
	2 \left| \sin \left( \frac{1}{2} \tau \varepsilon^{-2} (\mu_j t^3 + O(t^4)) \right) \right| \varepsilon^{3} (t^2 + \varepsilon^2)^{-3/2} \le \widetilde{C} (\tau) \varepsilon
	\end{equation}
	при всех достаточно малых $|t|$ и $\varepsilon$.
	Мы хотим оценить снизу максимум левой части~(\ref{abstr_time_shrp_f3}) (как функции от $t$). Введём функцию 
	\begin{equation*}
	h(y) = \left\{
	\begin{aligned}
	&\frac{2}{\pi} y, & &\text{при} \, y \in [0, \pi/2],\\
	&\frac{2}{\pi} (\pi - y), & &\text{при} \, y \in (\pi/2, \pi],
	\end{aligned}
	\right. 
	\end{equation*}
	(и продолжим её $\pi$-периодическим образом). Очевидно, $|\sin y| \ge h (y)$. Будем считать, что $t_*$ достаточно мало, так что
	\begin{equation}
	\label{abstr_time_shrp_f4}
	\frac{1}{2} |\mu_j| |t|^3 \le |\mu_j t^3 + O(t^4)| \le \frac{3}{2} |\mu_j| |t|^3, \qquad |t| \le t_*.
	\end{equation}
	Далее, пусть $\varepsilon$ достаточно мало, а именно, $\varepsilon \le (2 \pi)^{-1/2} |\mu_j \tau|^{1/2} t_*^{3/2}$.
	Тогда множество  $T = \left\{t \colon \frac{1}{2} |\tau| \varepsilon^{-2} |t^3\mu_j + O(t^4)| \le \frac{\pi}{2} \right\}$ содержится в интервале $|t| \le t_*$. Левая часть~(\ref{abstr_time_shrp_f3}) при $t \in T$ допускает оценку снизу
	\begin{multline*}
	2 \left| \sin \left( \frac{1}{2} \tau \varepsilon^{-2} (\mu_j t^3 + O(t^4)) \right) \right| \varepsilon^{3} (t^2 + \varepsilon^2)^{-3/2} \ge \\ \ge 2 h \left( \frac{1}{2} \tau \varepsilon^{-2} (\mu_j t^3 + O(t^4)) \right) \varepsilon^{3} (t^2 + \varepsilon^2)^{-3/2} \ge \varepsilon^{3} (t^2 + \varepsilon^2)^{-3/2} \cdot \frac{1}{\pi} |\tau| \varepsilon^{-2} |\mu_j| |t|^3.
	\end{multline*}
	Положим $t_\diamond = (2 \pi / 3)^{1/3} |\mu_j \tau|^{-1/3} \varepsilon^{2/3}$. Тогда $\frac{3}{4} \tau \varepsilon^{-2} |\mu_j| t_\diamond^3 \le \frac{\pi}{2}$, а поэтому $t_\diamond \in T$ в силу~(\ref{abstr_time_shrp_f4}). Максимум левой части~(\ref{abstr_time_shrp_f3}) оценивается снизу через
	\begin{equation*}
	\varepsilon^{3} (t^2 + \varepsilon^2)^{-3/2} \cdot \frac{1}{\pi} |\tau| \varepsilon^{-2} |\mu_j| t_\diamond^3 = \frac{2}{3} \cdot \frac{\varepsilon |\tau|}{\bigl((2\pi/3)^{2/3}|\mu_j|^{-2/3} + \varepsilon^{2/3} |\tau|^{2/3}\bigr)^{3/2}} \ge \frac{\sqrt{2}}{3} \cdot \frac{\varepsilon |\tau|}{2\pi/(3 |\mu_j|) + \varepsilon |\tau|}.
	\end{equation*}
	Это означает, что 
	\begin{equation}
	\label{abstr_time_shrp_f5}
	\frac{\sqrt{2}}{3} \left( \frac{2\pi}{3|\mu_j|} + \varepsilon |\tau| \right)^{-1} \le \frac{\widetilde{C}(\tau)}{|\tau|}
	\end{equation}
	при всех достаточно малых $\varepsilon > 0$. Но, поскольку $\lim_{\tau \to \infty} \widetilde{C}(\tau)/ |\tau| = 0$,   оценка~(\ref{abstr_time_shrp_f5}) не может выполняться при больших $|\tau|$ и $\varepsilon = O(|\tau|^{-1})$. Полученное противоречие завершает доказательство.
\end{proof}
Аналогично доказывается следующее утверждение, подтверждающее точность теорем~\ref{abstr_exp_enchcd_thrm_1}, \ref{abstr_exp_enchcd_thrm_2}.
\begin{thrm}
	\label{abstr_exp_time_shrp_thrm_2}
	Пусть $N_0 = 0$ и $\mathcal{N}^{(q)} \ne 0$ для некоторого $q \in \{1, \ldots, p\}$. Тогда не существует положительной функции $C(\tau)$ такой, что $\lim_{\tau \to \infty} C(\tau)/ |\tau|^{1/2} = 0$ и выполнена оценка
	\begin{equation*}
	\bigl\| e^{-i \tau \varepsilon^{-2} A (t)} P - e^{-i \tau \varepsilon^{-2} t^2 S P} P \bigr\| \varepsilon^2 (t^2 + \varepsilon^2)^{-1} \le  C(\tau) \varepsilon
	\end{equation*}
	при всех $\tau \in \mathbb{R}$ и всех достаточно малых $|t|$ и $\varepsilon > 0$.
\end{thrm}

\section{Аппроксимация окаймлённой операторной экспоненты}
\label{abstr_sndw_section}
\subsection{Операторное семейство вида $A(t) = M^* \widehat{A} (t) M$}
\label{abstr_A_and_Ahat_section}
Наряду с пространством $\mathfrak{H}$ рассмотрим ещё одно сепарабельное гильбертово пространство $\widehat{\mathfrak{H}}$. Пусть $\widehat{X} (t) = \widehat{X}_0 + t \widehat{X}_1 \colon \widehat{\mathfrak{H}} \to \mathfrak{H}_* $~--- семейство операторов того же вида, что и $X(t)$, причём для $\widehat{X} (t)$ выполнены предположения п.~\ref{abstr_X_A_section}. Пусть $M \colon \mathfrak{H} \to \widehat{\mathfrak{H}}$~--- изоморфизм. Предположим, что $M \Dom X_0 = \Dom \widehat{X}_0$, $X(t) = \widehat{X} (t) M$, а тогда и $X_0 = \widehat{X}_0 M$, $X_1 = \widehat{X}_1 M$. В $\widehat{\mathfrak{H}}$ введём семейство самосопряжённых операторов $\widehat{A} (t) = \widehat{X} (t)^* \widehat{X} (t)$. Тогда, очевидно,
\begin{equation}
\label{abstr_A_Ahat}
A(t) = M^* \widehat{A} (t) M.
\end{equation} 
Все объекты, отвечающие семейству $\widehat{A}(t)$, далее помечаются значком \textquotedblleft$\widehat{\phantom{m}} $\textquotedblright. Отметим, что $\widehat{\mathfrak{N}} = M \mathfrak{N}$ и $\widehat{\mathfrak{N}}_* =  \mathfrak{N}_*$. В пространстве $\widehat{\mathfrak{H}}$ рассмотрим положительно определённый оператор $Q \coloneqq (M M^*)^{-1} \colon \widehat{\mathfrak{H}} \to \widehat{\mathfrak{H}}$. Пусть $Q_{\widehat{\mathfrak{N}}}$~--- блок оператора $Q$ в подпространстве $\widehat{\mathfrak{N}}$, т.~е.
$Q_{\widehat{\mathfrak{N}}} = \widehat{P} Q|_{\widehat{\mathfrak{N}}} \colon \widehat{\mathfrak{N}} \to \widehat{\mathfrak{N}}$. Очевидно, $Q_{\widehat{\mathfrak{N}}}$~--- изоморфизм в $\widehat{\mathfrak{N}}$.

Как показано в \cite[предложение~1.2]{Su2007}, ортопроектор $P$ в $\mathfrak{H}$ на $\mathfrak{N}$ и ортопроектор $\widehat{P}$ в $\widehat{\mathfrak{H}}$ на $\widehat{\mathfrak{N}}$ связаны соотношением 
\begin{equation}
\label{abstr_P_Phat}
P = M^{-1} (Q_{\widehat{\mathfrak{N}}})^{-1} \widehat{P} (M^*)^{-1}.
\end{equation}
Пусть $\widehat{S} \colon \widehat{\mathfrak{N}} \to \widehat{\mathfrak{N}}$~--- спектральный росток семейства $\widehat{A} (t)$ при $t = 0$,
а $S$~--- росток семейства $A (t)$. В \cite[гл.~1, п.~1.5]{BSu2003} установлено следующее тождество:
\begin{equation}
\label{abstr_S_Shat}
S = P M^* \widehat{S} M |_\mathfrak{N}.
\end{equation}

\subsection{Операторы $\widehat{Z}_Q$ и $\widehat{N}_Q$}
\label{abstr_hatZ_Q_and_hatN_Q_section}
Для операторного семейства $\widehat{A} (t)$ введём оператор $\widehat{Z}_Q$, действующий в $\widehat{\mathfrak{H}}$ и сопоставляющий элементу $\widehat{u} \in \widehat{\mathfrak{H}}$ решение  $\widehat{\psi}_Q$ задачи $\widehat{X}^*_0 (\widehat{X}_0 \widehat{\psi}_Q + \widehat{X}_1 \widehat{\omega}) = 0$, $Q \widehat{\psi}_Q \perp \widehat{\mathfrak{N}}$, где $\widehat{\omega} = \widehat{P} \widehat{u}$. Как показано в~\cite[\S6]{BSu2005}, оператор $Z$ для семейства $A(t)$ и введённый оператор $\widehat{Z}_Q$ связаны соотношением 
\begin{equation}
\label{abstr_Z_hatZ_Q}
\widehat{Z}_Q =M Z M^{-1} \widehat{P}.
\end{equation}
Введём оператор $\widehat{N}_Q \coloneqq \widehat{Z}_Q^* \widehat{X}_1^* \widehat{R} \widehat{P} + (\widehat{R} \widehat{P})^* \widehat{X}_1  \widehat{Z}_Q$. Согласно~\cite[\S6]{BSu2005}, оператор $N$ для семейства $A(t)$ и введённый оператор $\widehat{N}_Q$ связаны соотношением
\begin{equation}
\label{abstr_N_hatN_Q}
\widehat{N}_Q = \widehat{P} (M^*)^{-1} N M^{-1} \widehat{P}.
\end{equation}
Поскольку $N = N_0 + N_*$, то $\widehat{N}_Q = \widehat{N}_{0,Q} + \widehat{N}_{*,Q}$, где
\begin{equation}
\label{abstr_N0*_hatN0*_Q}
\widehat{N}_{0,Q} = \widehat{P} (M^*)^{-1} N_0 M^{-1} \widehat{P}, \qquad \widehat{N}_{*,Q} = \widehat{P} (M^*)^{-1} N_* M^{-1} \widehat{P}.
\end{equation}

Справедлива следующая лемма, доказанная в~\cite[лемма~5.1]{Su2017}.
\begin{lemma}[\cite{Su2017}]
	\label{abstr_N_hatNQ_lemma}
	Условие $N = 0$ равносильно равенству $\widehat{N}_Q = 0$. Условие $N_0 = 0$ равносильно равенству $\widehat{N}_{0,Q} = 0$.
\end{lemma}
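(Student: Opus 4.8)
The plan is to read off both equivalences directly from the conjugation formulas (\ref{abstr_N_hatN_Q}) and (\ref{abstr_N0*_hatN0*_Q}), which present $\widehat{N}_Q$ and $\widehat{N}_{0,Q}$ as $\widehat{P}(M^*)^{-1}(\,\cdot\,)M^{-1}\widehat{P}$ applied to $N$ and to $N_0$ respectively. The implications $N=0\Rightarrow\widehat{N}_Q=0$ and $N_0=0\Rightarrow\widehat{N}_{0,Q}=0$ are then immediate, so all the content sits in the two converse implications.

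For the converse I would first record the structural fact, coming from Theorem~\ref{abstr_threshold_approx_thrm_2} together with the explicit formula (\ref{abstr_N0_N*}), that $N$ and $N_0$ annihilate $\mathfrak{N}^{\perp}$ and map $\mathfrak{N}$ into itself; equivalently $N=PNP$ and $N_0=PN_0P$. Consequently, setting $\omega:=M^{-1}\widehat{P}\widehat{u}\in\mathfrak{N}=M^{-1}\widehat{\mathfrak{N}}$, the formula (\ref{abstr_N_hatN_Q}) factors as $\widehat{N}_Q=\Phi\circ(N|_{\mathfrak{N}})\circ(M^{-1}\widehat{P})$, where $\Phi:=\widehat{P}(M^*)^{-1}|_{\mathfrak{N}}\colon\mathfrak{N}\to\widehat{\mathfrak{N}}$ and where $M^{-1}\widehat{P}\colon\widehat{\mathfrak{H}}\to\mathfrak{N}$ is surjective onto $\mathfrak{N}$. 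An identical factorization holds for $\widehat{N}_{0,Q}$ with $N_0$ in place of $N$.

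The key step, and the one place where the geometry of the pair $(\mathfrak{N},\widehat{\mathfrak{N}})$ really enters, is to show that $\Phi$ is an isomorphism of $\mathfrak{N}$ onto $\widehat{\mathfrak{N}}$. I would extract this from the projector identity (\ref{abstr_P_Phat}): multiplying it on the left by $M$ and restricting to $\mathfrak{N}$, where $P$ acts as the identity, gives $M\omega=(Q_{\widehat{\mathfrak{N}}})^{-1}\Phi\omega$, that is, $\Phi=Q_{\widehat{\mathfrak{N}}}\,M|_{\mathfrak{N}}$. Since $M|_{\mathfrak{N}}\colon\mathfrak{N}\to\widehat{\mathfrak{N}}$ is an isomorphism (because $\widehat{\mathfrak{N}}=M\mathfrak{N}$) and $Q_{\widehat{\mathfrak{N}}}$ is an isomorphism of $\widehat{\mathfrak{N}}$, the composite $\Phi$ is an isomorphism.

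With $\Phi$ invertible and $M^{-1}\widehat{P}$ surjective onto $\mathfrak{N}$, the factorization forces $\widehat{N}_Q=0$ to be equivalent to $N|_{\mathfrak{N}}=0$, hence to $N=0$; likewise $\widehat{N}_{0,Q}=0\Leftrightarrow N_0=0$. I expect the only genuine obstacle to be the isomorphism claim for $\Phi$; everything else is bookkeeping with the ranges of $\widehat{P}$ and with the equalities $\widehat{\mathfrak{N}}=M\mathfrak{N}$, $\mathfrak{N}=M^{-1}\widehat{\mathfrak{N}}$. A cosmetic alternative that avoids computing $\Phi$ explicitly would be to strip off the outer isomorphisms in (\ref{abstr_N_hatN_Q}) and use (\ref{abstr_P_Phat}) only to control the projectors, but the factorization above seems the cleanest route.
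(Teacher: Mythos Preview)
Your argument is correct. The paper does not supply its own proof of this lemma but simply cites it from \cite{Su2017}, so there is no in-paper argument to compare against; your derivation from the conjugation identities (\ref{abstr_N_hatN_Q}), (\ref{abstr_N0*_hatN0*_Q}) together with (\ref{abstr_P_Phat}) is the natural direct proof and all steps---in particular $N=PNP$, the surjectivity of $M^{-1}\widehat{P}$ onto $\mathfrak{N}$, and the identification $\Phi=Q_{\widehat{\mathfrak{N}}}\,M|_{\mathfrak{N}}$ showing $\Phi$ is an isomorphism---are valid.
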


\subsection{Операторы $\widehat{Z}_{2,Q}$, $\widehat{R}_{2,Q}$ и $\widehat{N}_{1,Q}^0$}
\label{abstr_hatZ2_Q_hatR2_Q_N1^0_Q_section}
Пусть $\widehat{u} \in \widehat{\mathfrak{H}}$ и пусть $\widehat{\phi}_Q = \widehat{\phi}_Q(\widehat{u}) \in \Dom \widehat{X}_0$~--- (слабое) решение уравнения 
\begin{equation*}
\widehat{X}^*_0 (\widehat{X}_0 \widehat{\phi}_Q + \widehat{X}_1 \widehat{Z}_Q \widehat{\omega}) = -\widehat{X}_1^* \widehat{R} \widehat{\omega} + Q (Q_{\widehat{\mathfrak{N}}})^{-1} \widehat{P} \widehat{X}_1^* \widehat{R} \widehat{\omega}, \qquad Q \widehat{\phi}_Q \perp \widehat{\mathfrak{N}},
\end{equation*}
где $\widehat{\omega} = \widehat{P} \widehat{u}$. Ясно, что правая часть этого уравнения принадлежит $\widehat{\mathfrak{N}}^{\perp} = \Ran \widehat{X}_0^*$, поэтому условие разрешимости выполнено.
Определим оператор $\widehat{Z}_{2,Q} \colon \widehat{\mathfrak{H}} \to \widehat{\mathfrak{H}}$ по формуле $\widehat{Z}_{2,Q} \widehat{u} = \widehat{\phi}_Q(\widehat{u})$. 

Теперь, введём оператор $\widehat{R}_{2,Q} \colon \widehat{\mathfrak{N}} \to \mathfrak{H}_*$ по формуле
$\widehat{R}_{2,Q} = \widehat{X}_0 \widehat{Z}_{2,Q}  + \widehat{X}_1 \widehat{Z}_Q $.
Наконец, определим оператор $\widehat{N}_{1,Q}^0$:
\begin{equation*}
\widehat{N}_{1,Q}^0 = \widehat{Z}_{2,Q}^* \widehat{X}_1^* \widehat{R} \widehat{P} + (\widehat{R} \widehat{P})^* \widehat{X}_1 \widehat{Z}_{2,Q} + \widehat{R}_{2,Q}^* \widehat{R}_{2,Q} \widehat{P}.
\end{equation*}
В~\cite[\S6, п.~6.3]{VSu2011} были установлены следующие соотношения:
\begin{gather}
\notag
\widehat{Z}_{2,Q} =M Z_2 M^{-1} \widehat{P}, \\
\notag
R_2 = \widehat{R}_{2,Q} M |_{\mathfrak{N}}, \quad \widehat{R}_{2,Q} = R_2 M^{-1} |_{\widehat{\mathfrak{N}}},\\
\label{abstr_N1^0_N1Q^0_hat}
\widehat{N}_{1,Q}^0 = \widehat{P} (M^*)^{-1} N_1^0 M^{-1} \widehat{P}.
\end{gather}

\subsection{Связь операторов и коэффициентов степенных разложений}
Укажем связь коэффициентов степенных разложений~(\ref{abstr_A(t)_eigenvalues_series}), (\ref{abstr_A(t)_eigenvectors_series}) и операторов $\widehat{S}$ и $Q_{\widehat{\mathfrak{N}}}$. (См.~\cite[п.~1.6,~1.7]{BSu2005}.) Положим $\zeta_l \coloneqq M \omega_l \in \widehat{\mathfrak{N}}$, $l = 1, \ldots, n$. Тогда из~(\ref{abstr_S_eigenvectors}) и~(\ref{abstr_P_Phat}), (\ref{abstr_S_Shat}) видно, что
\begin{equation}
\label{abstr_hatS_gener_spec_problem}
\widehat{S} \zeta_l  = \gamma_l Q_{\widehat{\mathfrak{N}}} \zeta_l, \qquad l = 1, \ldots, n. 
\end{equation}
Набор $\zeta_1, \ldots, \zeta_n$ образует базис в $\widehat{\mathfrak{N}}$, ортонормированный с весом $Q_{\widehat{\mathfrak{N}}}$: $(Q_{\widehat{\mathfrak{N}}} \zeta_l, \zeta_j) = \delta_{lj}$, $l,j = 1,\ldots,n$.

Операторы $\widehat{N}_{0,Q}$ и $\widehat{N}_{*,Q}$ можно описать в терминах коэффициентов степенных разложений~(\ref{abstr_A(t)_eigenvalues_series}) и~(\ref{abstr_A(t)_eigenvectors_series}); ср.~(\ref{abstr_N0_N*}). Положим $\widetilde{\zeta}_l \coloneqq M \widetilde{\omega}_l \in \widehat{\mathfrak{N}}$, $l = 1, \ldots,n$. Тогда
\begin{equation}
\label{abstr_hatN_0Q_N_*Q}
\widehat{N}_{0,Q} = \sum_{k=1}^{n} \mu_k (\cdot, Q_{\widehat{\mathfrak{N}}} \zeta_k) Q_{\widehat{\mathfrak{N}}} \zeta_k, \qquad \widehat{N}_{*,Q} = \sum_{k=1}^{n} \gamma_k \left( (\cdot, Q_{\widehat{\mathfrak{N}}} \widetilde{\zeta}_k) Q_{\widehat{\mathfrak{N}}} \zeta_k + (\cdot, Q_{\widehat{\mathfrak{N}}} \zeta_k) Q_{\widehat{\mathfrak{N}}} \widetilde{\zeta}_k \right). 
\end{equation}
Перейдём теперь к обозначениям, принятым в~п.~\ref{abstr_cluster_section}. Напомним, что различные собственные значения ростка $S$ обозначаются через $\gamma^{\circ}_q$, $q = 1,\ldots,p$, а соответствующие собственные подпространства через $\mathfrak{N}_q$. Набор векторов $\omega_l$, $l=i, \ldots, i+k_q-1$, где $i = i(q) = k_1+\ldots+k_{q-1}+1$, образует ортонормированный базис в $\mathfrak{N}_q$. Тогда те же числа $\gamma^{\circ}_q$, $q = 1,\ldots,p$~--- это различные собственные значения задачи~(\ref{abstr_hatS_gener_spec_problem}), а $M \mathfrak{N}_q$~--- соответствующие собственные подпространства. Векторы $\zeta_l = M \omega_l$, $l=i, \ldots, i+k_q-1$, образуют базис в $M \mathfrak{N}_q$, ортонормированный с весом $Q_{\widehat{\mathfrak{N}}}$. Через $\mathcal{P}_q$ обозначим \textquotedblleft косой\textquotedblright \ проектор на $M \mathfrak{N}_q$, ортогональный относительно скалярного произведения $(Q_{\widehat{\mathfrak{N}}} \cdot, \cdot)$, т.~е. $\mathcal{P}_q = \sum_{l=i}^{i+k_q-1} (\cdot, Q_{\widehat{\mathfrak{N}}} \zeta_l) \zeta_l$. Легко видеть, что $\mathcal{P}_q =M P_q M^{-1} \widehat{P}$.
Используя~(\ref{abstr_N0_N*_invar_repr}), (\ref{abstr_N_hatN_Q}) и~(\ref{abstr_N0*_hatN0*_Q}), нетрудно проверить равенства
\begin{equation}
\label{abstr_hatN_0Q_N_*Q_invar_repr}
\widehat{N}_{0,Q} = \sum_{j=1}^{p} \mathcal{P}_j^* \widehat{N}_Q \mathcal{P}_j, \qquad \widehat{N}_{*,Q} = \sum_{\substack{1 \le l,j \le p \\ j \ne l}} \mathcal{P}_l^* \widehat{N}_Q \mathcal{P}_j.
\end{equation}

Далее, можно указать связь между собственными числами и собственными векторами задачи~(\ref{abstr_N_eigenvalues}) и оператором $\widehat{N}_Q$. Пусть $\gamma^{\circ}_q$~--- $k_q$-кратное собственное значение задачи~(\ref{abstr_hatS_gener_spec_problem}). Тогда из~(\ref{abstr_N_hatN_Q}) и очевидного равенства $M P_q = \widehat{P}_{M \mathfrak{N}_q} M P_q$, где $\widehat{P}_{M \mathfrak{N}_{q}}$~--- ортопроектор на подпространство $M \mathfrak{N}_{q}$, видно, что
\begin{equation}
\label{abstr_hatN_Q_gener_spec_problem}
\widehat{P}_{M \mathfrak{N}_{q}} \widehat{N}_Q \zeta_l = \mu_l Q_{M \mathfrak{N}_q} \zeta_l, \qquad l = i(q), \ldots, i(q)+k_q-1,
\end{equation}
где $Q_{M \mathfrak{N}_q} = \widehat{P}_{M \mathfrak{N}_q} Q_{\widehat{\mathfrak{N}}}|_{M\mathfrak{N}_q}$. Напомним, что различные собственные значения задачи~(\ref{abstr_N_eigenvalues}) обозначаются через $\mu^{\circ}_{q',q}$, $q' = 1,\ldots,p'(q)$, а соответствующие собственные подпространства через $\mathfrak{N}_{q',q}$. Тогда те же числа $\mu^{\circ}_{q',q}$, $q' = 1,\ldots,p'(q)$~--- это различные собственные значения задачи~(\ref{abstr_hatN_Q_gener_spec_problem}), а $M \mathfrak{N}_{q',q}$~--- соответствующие собственные подпространства.

Наконец, свяжем собственные числа и собственные векторы задачи~(\ref{abstr_srcN^q_eigenvalues}) и оператор
\begin{equation*}
\widehat{\mathcal{N}}_Q^{(q',q)} =  \widehat{P}_{M \mathfrak{N}_{q',q}} \left. \left( \widehat{N}_{1,Q}^0 - \frac{1}{2} \widehat{Z}_Q^* Q \widehat{Z}_Q (MM^*) \widehat{S} \widehat{P} - \frac{1}{2} \widehat{S} (MM^*) \widehat{Z}_Q^* Q \widehat{Z}_Q \right)\right|_{M\mathfrak{N}_{q',q}} + \widehat{\mathcal{N}}^{(q',q)}_{0,Q},
\end{equation*}
где $\widehat{\mathcal{N}}^{(q',q)}_{0,Q}$~--- оператор в $M \mathfrak{N}_{q',q}$, порождённый формой 
\begin{equation*}
\widehat{\mathfrak{n}}_{0,Q}^{(q',q)}[\cdot,\cdot] =  \sum_{\substack{j\in\{1,\ldots,p\} \\ j \ne q}} \frac{( \widehat{P}_{M \mathfrak{N}_j} (M M^*) \widehat{P}_{M \mathfrak{N}_j} \widehat{N}_Q  \cdot,\widehat{N}_Q \cdot)}{\gamma^{\circ}_q - \gamma^{\circ}_j},
\end{equation*}
а $\widehat{P}_{M \mathfrak{N}_{q',q}}$~--- ортопроектор на подпространство $M \mathfrak{N}_{q',q}$. Из~(\ref{abstr_S_Shat}), (\ref{abstr_Z_hatZ_Q}), (\ref{abstr_N1^0_N1Q^0_hat}) и равенств $M P_j = \widehat{P}_{M\mathfrak{N}_j} M P_j$, $\widehat{P}_{M\mathfrak{N}_j} M (I - P_j) = 0$, $j=1,\ldots,p$, $M P_{q',q} = \widehat{P}_{M \mathfrak{N}_{q',q}} M P_{q',q}$, где $\widehat{P}_{M \mathfrak{N}_{q',q}}$~--- ортопроектор на подпространство $M \mathfrak{N}_{q',q}$,  видно, что
\begin{equation}
\label{abstr_scrNhat_M^(q)_gener_spec_problem}
\widehat{\mathcal{N}}_Q^{(q',q)} \zeta_l = \nu_l Q_{M \mathfrak{N}_{q',q}} \zeta_l, \qquad l = i', \ldots, i'+k_{q',q}-1.
\end{equation}
Здесь $i' = i'(q',q) = i(q)+k_{1,q}+\ldots+k_{q'-1,q}$ и $Q_{M \mathfrak{N}_{q',q}} = \widehat{P}_{M \mathfrak{N}_{q',q}} Q_{\widehat{\mathfrak{N}}}|_{M\mathfrak{N}_{q',q}}$.

\subsection{Аппроксимация окаймлённой операторной экспоненты}
\label{abstr_sndw_exp_section}
В этом пункте мы находим аппроксимацию для операторной экспоненты $e^{-i \tau \varepsilon^{-2} A(t)}$ семейства вида~(\ref{abstr_A_Ahat}) в терминах ростка $\widehat{S}$ оператора $\widehat{A}(t)$ и изоморфизма $M$. При этом оказывается удобным окаймить операторную экспоненту подходящими множителями.

Положим $M_0 \coloneqq (Q_{\widehat{\mathfrak{N}}})^{-1/2}$.  В~\cite[лемма~5.3]{Su2017} были доказаны следующие оценки
\begin{align}
\label{abstr_sndw_exp_est_1}
\| M e^{-i\tau A(t)} M^{-1} \widehat{P} -  M_0 e^{-i \tau t^2 M_0 \widehat{S} M_0} M_0^{-1} \widehat{P}\| \le  \| M \|^2 \| M^{-1} \|^2 \| e^{-i \tau A(t)} P -  e^{-i \tau t^2 S P} P \|,\\
\label{abstr_sndw_exp_est_2}
\| e^{-i \tau A(t)} P -  e^{-i \tau t^2 S P} P \| \le  \| M \|^2 \| M^{-1} \|^2 \| M e^{-i\tau A(t)} M^{-1} \widehat{P} -  M_0 e^{-i \tau t^2 M_0 \widehat{S} M_0} M_0^{-1} \widehat{P}\|. 
\end{align}
Из теорем~\ref{abstr_exp_general_thrm}, \ref{abstr_exp_enchcd_thrm_1}, \ref{abstr_exp_enchcd_thrm_2}, леммы~\ref{abstr_N_hatNQ_lemma} и неравенства~(\ref{abstr_sndw_exp_est_1}) непосредственно вытекают следующие результаты.
\begin{thrm}[\cite{BSu2008}]
	\label{abstr_sndw_exp_general_thrm}
	В предположениях п.~\emph{\ref{abstr_A_and_Ahat_section}} при $\tau \in \mathbb{R}$, $\varepsilon > 0$ и $|t| \le t^0$ выполнена оценка
	\begin{equation*}
	\|M e^{-i \tau \varepsilon^{-2} A(t)} M^{-1} \widehat{P} -  M_0 e^{-i \tau \varepsilon^{-2} t^2 M_0 \widehat{S} M_0} M_0^{-1} \widehat{P}\| \varepsilon^{3} (t^2 + \varepsilon^2)^{-3/2} \le \| M \|^2 \| M^{-1} \|^2 (C_1  + C_2 |\tau| ) \varepsilon.
	\end{equation*}
\end{thrm}

\begin{thrm}
	\label{abstr_sndw_exp_enchcd_thrm_1}
	Пусть выполнены предположения п.~\emph{\ref{abstr_A_and_Ahat_section}} и пусть $\widehat{N}_Q = 0$. Тогда при $\tau \in \mathbb{R}$, $\varepsilon > 0$ и $|t| \le t^0$ выполнена оценка
	\begin{equation*}
	\| M e^{-i \tau \varepsilon^{-2} A(t)} M^{-1} \widehat{P} -  M_0 e^{-i \tau \varepsilon^{-2} t^2 M_0 \widehat{S} M_0} M_0^{-1} \widehat{P}\| \varepsilon^{2} (t^2 + \varepsilon^2)^{-1} \le \| M \|^2 \| M^{-1} \|^2 ( C_1  + C'_4 | \tau |^{1/2} ) \varepsilon.
	\end{equation*}
\end{thrm}

\begin{thrm}
	\label{abstr_sndw_exp_enchcd_thrm_2}
	Пусть выполнены предположения пп.~\emph{\ref{abstr_A_and_Ahat_section}} и условие~\emph{\ref{abstr_nondegeneracy_cond}}. Пусть $\widehat{N}_{0,Q} = 0$. Тогда при $\tau \in \mathbb{R}$ и $|t| \le t^{00}$ выполнена оценка
	\begin{equation*}
	\| M e^{-i \tau \varepsilon^{-2} A(t)} M^{-1} \widehat{P} -  M_0 e^{-i \tau \varepsilon^{-2} t^2 M_0 \widehat{S} M_0} M_0^{-1} \widehat{P} \| \varepsilon^{2} (t^2 + \varepsilon^2)^{-1} \le \| M \|^2 \| M^{-1} \|^2 (C_5 + C'_6 | \tau |^{1/2} ) \varepsilon.
	\end{equation*}
\end{thrm}

Теорема~\ref{abstr_sndw_exp_general_thrm} была доказана в~\cite[теорема~3.2]{BSu2008}.
\begin{remark}
	Теоремы~\emph{\ref{abstr_sndw_exp_enchcd_thrm_1}} и~\emph{\ref{abstr_sndw_exp_enchcd_thrm_2}} усиливают результаты теорем~\emph{5.8} и~\emph{5.9} из~\emph{\cite{Su2017}} в отношении зависимости оценок от $\tau$. 
\end{remark}

\subsection{Подтверждение точности}
Из теорем~\ref{abstr_exp_smooth_shrp_thrm_1}, \ref{abstr_exp_smooth_shrp_thrm_2}, \ref{abstr_exp_time_shrp_thrm_1}, \ref{abstr_exp_time_shrp_thrm_2} и неравенства~(\ref{abstr_sndw_exp_est_2}) непосредственно вытекают следующие утверждения.

\begin{thrm}[\cite{Su2017}]
	\label{abstr_sndw_exp_smooth_shrp_thrm_1}
	Пусть $\widehat{N}_{0,Q} \ne 0$. Пусть $\tau \ne 0$ и $0 \le s < 3$. Тогда не существует такой константы $C(\tau) >0$, чтобы оценка
	\begin{equation*}
	\bigl\| M e^{-i \tau \varepsilon^{-2} A(t)} M^{-1} \widehat{P} -  M_0 e^{-i \tau \varepsilon^{-2} t^2 M_0 \widehat{S} M_0} M_0^{-1} \widehat{P} \bigr\| \varepsilon^s (t^2 + \varepsilon^2)^{-s/2} \le  C(\tau) \varepsilon
	\end{equation*}
выполнялась для всех достаточно малых $|t|$ и $\varepsilon$.
\end{thrm}

\begin{thrm}
	\label{abstr_sndw_exp_smooth_shrp_thrm_2}
	Пусть $\widehat{N}_{0,Q} = 0$ и $\widehat{\mathcal{N}}_Q^{(q)} \ne 0$ для некоторого $q \in \{1, \ldots, p\}$.  Пусть $\tau \ne 0$ и $0 \le s < 2$. Тогда не существует такой константы $C(\tau) >0$, чтобы оценка
	\begin{equation*}
	\bigl\| M e^{-i \tau \varepsilon^{-2} A(t)} M^{-1} \widehat{P} -  M_0 e^{-i \tau \varepsilon^{-2} t^2 M_0 \widehat{S} M_0} M_0^{-1} \widehat{P} \bigr\| \varepsilon^s (t^2 + \varepsilon^2)^{-s/2} \le  C(\tau) \varepsilon
	\end{equation*}
выполнялась для всех достаточно малых $|t|$ и $\varepsilon$.
\end{thrm}

\begin{thrm}
	\label{abstr_sndw_exp_time_shrp_thrm_1}
	Пусть $\widehat{N}_{0,Q} \ne 0$. Тогда не существует положительной функции $C(\tau)$ такой, что $\lim_{\tau \to \infty} C(\tau)/ |\tau| = 0$ и выполнена оценка
	\begin{equation}
	\label{abstr_sndw_exp_time_shrp_est_1}
	\bigl\| M e^{-i \tau \varepsilon^{-2} A(t)} M^{-1} \widehat{P} -  M_0 e^{-i \tau \varepsilon^{-2} t^2 M_0 \widehat{S} M_0} M_0^{-1} \widehat{P} \bigr\| \varepsilon^3 (t^2 + \varepsilon^2)^{-3/2} \le  C(\tau) \varepsilon
	\end{equation}
	при всех $\tau \in \mathbb{R}$ и всех достаточно малых $|t|$ и $\varepsilon > 0$.
\end{thrm}

\begin{thrm}
	\label{abstr_sndw_exp_time_shrp_thrm_2}
	Пусть $\widehat{N}_{0,Q} = 0$ и $\widehat{\mathcal{N}}_Q^{(q)} \ne 0$ для некоторого $q \in \{1, \ldots, p\}$. Тогда не существует положительной функции $C(\tau)$ такой, что $\lim_{\tau \to \infty} C(\tau)/ |\tau|^{1/2} = 0$ и выполнена оценка
	\begin{equation*}
	\bigl\| M e^{-i \tau \varepsilon^{-2} A(t)} M^{-1} \widehat{P} -  M_0 e^{-i \tau \varepsilon^{-2} t^2 M_0 \widehat{S} M_0} M_0^{-1} \widehat{P} \bigr\| \varepsilon^2 (t^2 + \varepsilon^2)^{-1} \le  C(\tau) \varepsilon
	\end{equation*}
	при всех $\tau \in \mathbb{R}$ и всех достаточно малых $|t|$ и $\varepsilon > 0$.
\end{thrm}

Теорема~\ref{abstr_sndw_exp_smooth_shrp_thrm_1} была доказана в~\cite[теорема~5.10]{Su2017}.

\part{Усреднение периодических\\дифференциальных операторов в 	$L_2(\mathbb{R}^d; \mathbb{C}^n)$}
\label{L2_operators_part}

\section{Периодические дифференциальные операторы в $L_2(\mathbb{R}^d; \mathbb{C}^n)$}

\subsection{Предварительные сведения: решётки  и преобразование Гельфанда}
Пусть $\Gamma$~--- решётка в $\mathbb{R}^d$, порождённая базисом $\mathbf{a}_1, \ldots , \mathbf{a}_d$, т.~е. $\Gamma = \left\{ \mathbf{a} \in \mathbb{R}^d \colon \mathbf{a} = \sum_{j=1}^{d} n_j \mathbf{a}_j, \; n_j \in \mathbb{Z} \right\}$,
и пусть $\Omega$~--- элементарная ячейка решётки $\Gamma$: $
\Omega \coloneqq \left\{ \mathbf{x} \in \mathbb{R}^d \colon \mathbf{x} = \sum_{j=1}^{d} \xi_j \mathbf{a}_j, \; 0 < \xi_j < 1 \right\}$. 
Базис $\mathbf{b}_1, \ldots, \mathbf{b}_d$, двойственный по отношению к $\mathbf{a}_1, \ldots , \mathbf{a}_d$, определяется из соотношений $\left< \mathbf{b}_l, \mathbf{a}_j \right> = 2 \pi \delta_{lj}$. Этот базис порождает решётку $\widetilde \Gamma$, \emph{двойственную} к решётке $\Gamma$. Обозначим через $\widetilde \Omega$ центральную \emph{зону Бриллюэна} решётки $\widetilde \Gamma$:
\begin{equation}
\label{Brillouin_zone}
\widetilde \Omega = \left\{ \mathbf{k} \in \mathbb{R}^d \colon | \mathbf{k} | < | \mathbf{k} - \mathbf{b} |, \; 0 \ne \mathbf{b} \in \widetilde \Gamma \right\}.
\end{equation}
Будем пользоваться обозначениями $| \Omega | = \meas \Omega$, $| \widetilde \Omega | = \meas \widetilde \Omega$ и отметим, что $| \Omega |  | \widetilde \Omega | = (2 \pi)^d$. Пусть $r_0$~--- радиус шара, \emph{вписанного} в $\clos \widetilde \Omega$. Отметим, что
\begin{equation*}
2 r_0 = \min|\mathbf{b}|,  \quad 0 \ne \mathbf{b} \in \widetilde \Gamma.
\end{equation*}
С решёткой $ \Gamma $ связано разложение в ряд Фурье
$\{ \hat{\mathbf{u}}_{\mathbf{b}} \} \mapsto \mathbf{u}$:
$\mathbf{u}(\mathbf{x}) = | \Omega |^{-1/2} \sum_{\mathbf{b} \in \widetilde \Gamma} \hat{\mathbf{u}}_{\mathbf{b}} e^{i \left<\mathbf{b}, \mathbf{x} \right>}$, которое унитарно отображает $l_2 (\widetilde \Gamma; \mathbb{C}^n) $ на $L_2 (\Omega; \mathbb{C}^n)$. \textit{Через $\widetilde H^\sigma(\Omega; \mathbb{C}^n)$ обозначается подпространство тех функций  из $ H^\sigma(\Omega; \mathbb{C}^n)$,  $\Gamma$-перио\-ди\-ческое продолжение которых на $\mathbb{R}^d$ принадлежит $H^\sigma_{\mathrm{loc}}(\mathbb{R}^d; \mathbb{C}^n)$}. Имеет место равенство
\begin{equation}
\label{D_and_fourier}
\int_{\Omega} |(\mathbf{D} + \mathbf{k}) \mathbf{u}|^2\, d\mathbf{x} = \sum_{\mathbf{b} \in \widetilde{\Gamma}} |\mathbf{b} + \mathbf{k} |^2 |\hat{\mathbf{u}}_{\mathbf{b}}|^2, \qquad \mathbf{u} \in \widetilde{H}^1(\Omega; \mathbb{C}^n), \; \mathbf{k} \in \mathbb{R}^d,
\end{equation} 
причём сходимость ряда в правой части~(\ref{D_and_fourier}) равносильна включению $\mathbf{u} \in \widetilde{H}^1(\Omega; \mathbb{C}^n)$. Из~(\ref{Brillouin_zone})~и~(\ref{D_and_fourier}) следует оценка
\begin{equation}
\label{(D+k)u_est}
\int_{\Omega} |(\mathbf{D} + \mathbf{k}) \mathbf{u}|^2\, d\mathbf{x} \ge \sum_{\mathbf{b} \in \widetilde{\Gamma}} | \mathbf{k} |^2 |\hat{\mathbf{u}}_{\mathbf{b}}|^2 = | \mathbf{k} |^2 \int_{\Omega} |\mathbf{u}|^2\, d\mathbf{x}, \qquad  \mathbf{u} \in \widetilde{H}^1(\Omega; \mathbb{C}^n), \; \mathbf{k} \in \widetilde{\Omega}.
\end{equation}  

Преобразование Гельфанда $\mathscr{U}$ первоначально определяется на функциях из класса Шварца $\mathbf{v} \in \mathcal{S}(\mathbb{R}^d; \mathbb{C}^n)$ формулой:
\begin{equation*}
\tilde{\mathbf{v}} ( \mathbf{k}, \mathbf{x}) = (\mathscr{U} \- \mathbf{v}) (\mathbf{k}, \mathbf{x}) = | \widetilde \Omega |^{-1/2} \sum_{\mathbf{a} \in \Gamma} e^{- i \left< \mathbf{k}, \mathbf{x} + \mathbf{a} \right>} \mathbf{v} ( \mathbf{x} + \mathbf{a}), \qquad
\mathbf{x} \in \Omega, \; \mathbf{k} \in \widetilde \Omega,
\end{equation*}
и продолжается по непрерывности до унитарного отображения:
\begin{equation*}
\mathscr{U} \colon L_2 (\mathbb{R}^d; \mathbb{C}^n) \to \int_{\widetilde \Omega} \oplus  L_2 (\Omega; \mathbb{C}^n) \, d \mathbf{k} \eqqcolon \mathcal{K}.
\end{equation*}

\subsection{Факторизованные операторы $\mathcal{A}$ второго порядка}
\label{A_section}
Пусть $b (\mathbf{D})= \sum_{l=1}^d b_l D_l$, где $b_l$ --- постоянные ($ m \times n $)-матрицы (вообще говоря, с комплексными элементами).
\emph{Предполагается, что $m \ge n$}. Рассмотрим символ $b(\boldsymbol{\xi}) = \sum_{l=1}^d b_l \xi_l$, $\boldsymbol{\xi} \in \mathbb{R}^d$.
\emph{Предположим, что} $\rank b( \boldsymbol{\xi} ) = n$, $0 \ne  \boldsymbol{\xi} \in \mathbb{R}^d$. Это равносильно тому, что для некоторых $\alpha_0, \alpha_1$ выполнены неравенства
\begin{equation}
\label{alpha0_alpha1}
\alpha_0 \mathbf{1}_n \le b( \boldsymbol{\theta} )^* b( \boldsymbol{\theta} ) \le \alpha_1 \mathbf{1}_n, \quad  \boldsymbol{\theta} \in \mathbb{S}^{d-1}, \quad 0 < \alpha_0 \le \alpha_1 < \infty.
\end{equation}
Пусть $f(\mathbf{x})$~--- $\Gamma$-периодическая  ($n \times n$)-матричнозначная функция и $h(\mathbf{x})$~--- $\Gamma$-пе\-риоди\-чес\-кая  ($m \times m$)-матричнозначная функция, такие что
\begin{equation}
\label{h_f_Linfty}
f, f^{-1} \in L_{\infty} (\mathbb{R}^d); \quad h, h^{-1} \in L_{\infty} (\mathbb{R}^d).
\end{equation}
Рассмотрим замкнутый оператор $\mathcal{X}  \colon  L_2 (\mathbb{R}^d ; \mathbb{C}^n) \to  L_2 (\mathbb{R}^d ; \mathbb{C}^m)$, заданный выражением
$\mathcal{X} = h b( \mathbf{D} ) f$ на области определения
$\Dom \mathcal{X} = \left\lbrace \mathbf{u} \in L_2 (\mathbb{R}^d ; \mathbb{C}^n) \colon f \mathbf{u} \in H^1  (\mathbb{R}^d ; \mathbb{C}^n) \right\rbrace$. Самосопряжённый оператор $\mathcal{A} = \mathcal{X}^* \mathcal{X}$ в $L_2 (\mathbb{R}^d ; \mathbb{C}^n)$ порождается замкнутой квадратичной формой $\mathfrak{a}[\mathbf{u}, \mathbf{u}] = \| \mathcal{X} \mathbf{u} \|^2_{L_2(\mathbb{R}^d)}$, $ \mathbf{u} \in \Dom \mathcal{X}$. Формально,
\begin{equation}
\label{A}
\mathcal{A} = f (\mathbf{x})^* b( \mathbf{D} )^* g( \mathbf{x} )  b( \mathbf{D} ) f(\mathbf{x}),
\end{equation}
где $g( \mathbf{x} ) = h( \mathbf{x} )^*  h( \mathbf{x} )$. Используя преобразование Фурье и~(\ref{alpha0_alpha1}),~(\ref{h_f_Linfty}), легко проверить оценки
\begin{equation*}
\alpha_0 \| g^{-1} \|_{L_{\infty}}^{-1} \| \mathbf{D} (f \mathbf{u}) \|_{L_2(\mathbb{R}^d)}^2 \le \mathfrak{a}[\mathbf{u}, \mathbf{u}] \le \alpha_1 \| g \|_{L_{\infty}} \| \mathbf{D} (f \mathbf{u}) \|_{L_2(\mathbb{R}^d)}^2, \qquad \mathbf{u} \in \Dom \mathcal{X}.
\end{equation*}

\subsection{Операторы $\mathcal{A}(\mathbf{k})$}
Положим
\begin{equation}
\label{Spaces_H}
\mathfrak{H} = L_2 (\Omega; \mathbb{C}^n), \qquad \mathfrak{H}_* = L_2 (\Omega; \mathbb{C}^m)
\end{equation}
и рассмотрим замкнутый оператор $\mathcal{X} (\mathbf{k}) \colon \mathfrak{H} \to \mathfrak{H}_*$, зависящий от параметра $\mathbf{k} \in \mathbb{R}^d$ и заданный выражением
$\mathcal{X} (\mathbf{k}) = hb(\mathbf{D} + \mathbf{k})f$ на области 
\begin{equation*}
\Dom \mathcal{X} (\mathbf{k}) = \bigl\lbrace \mathbf{u} \in \mathfrak{H} \colon   f \mathbf{u} \in \widetilde{H}^1 (\Omega; \mathbb{C}^n)\bigr\rbrace \eqqcolon \mathfrak{d}.
\end{equation*}
Самосопряжённый оператор $\mathcal{A} (\mathbf{k}) =\mathcal{X} (\mathbf{k})^* \mathcal{X} (\mathbf{k}) \colon \mathfrak{H} \to \mathfrak{H}$ порождается квадратичной формой $\mathfrak{a}(\mathbf{k})[\mathbf{u}, \mathbf{u}] = \| \mathcal{X}(\mathbf{k}) \mathbf{u} \|_{\mathfrak{H}_*}^2$, $\mathbf{u} \in \mathfrak{d}$.
Используя разложение функции $\mathbf{u}$ в ряд Фурье и условия~(\ref{alpha0_alpha1}),~(\ref{h_f_Linfty}), легко проверить, что
\begin{equation}
\label{a(k)_est}
\alpha_0 \|g^{-1} \|_{L_\infty}^{-1} \|(\mathbf{D} + \mathbf{k}) f \mathbf{u} \|_{\widetilde{\mathfrak{H}}}^2 \le \mathfrak{a}(\mathbf{k})[\mathbf{u}, \mathbf{u}] \le \alpha_1 \|g \|_{L_\infty} \|(\mathbf{D} + \mathbf{k}) f \mathbf{u} \|_{\widetilde{\mathfrak{H}}}^2, \quad \mathbf{u} \in \mathfrak{d}.
\end{equation}

Из~(\ref{(D+k)u_est}) и нижней оценки~(\ref{a(k)_est}) вытекает, что
\begin{equation}
\label{c_*}	
\mathcal{A} (\mathbf{k}) \ge c_* |\mathbf{k}|^2 I, \qquad \mathbf{k} \in \widetilde{\Omega}, \; c_* = \alpha_0\|f^{-1} \|_{L_\infty}^{-2} \|g^{-1} \|_{L_\infty}^{-1} .
\end{equation}

Положим $\mathfrak{N} \coloneqq \Ker \mathcal{A} (0) = \Ker \mathcal{X} (0)$. Соотношения~(\ref{a(k)_est}) при $\mathbf{k} = 0$ показывают, что
\begin{equation}
\label{frakN}
\mathfrak{N} = \left\lbrace \mathbf{u} \in L_2 (\Omega; \mathbb{C}^n) \colon f \mathbf{u} = \mathbf{c} \in \mathbb{C}^n \right\rbrace, \qquad \dim \mathfrak{N} = n. 
\end{equation}

\subsection{Зонные функции}
Обозначим через $E_j(\mathbf{k})$, $j \in \mathbb{N}$, последовательные (с учётом кратностей) собственные значения оператора $\mathcal{A}(\mathbf{k})$ (зонные функции):
\begin{equation*}
E_1(\mathbf{k}) \le E_2(\mathbf{k}) \le \ldots \le E_j(\mathbf{k}) \le \ldots, \qquad \mathbf{k} \in \mathbb{R}^d.
\end{equation*}
Зонные функции $E_j(\mathbf{k})$ непрерывны и $\widetilde{\Gamma}$-периодичны.
Как показано в~\cite[гл.~2, п.~2.2]{BSu2003} (на основании простых вариационных соображений), зонные функции удовлетворяют следующим оценкам:
\begin{alignat*}{2}
E_j(\mathbf{k})  &\ge c_* | \mathbf{k} |^2, \qquad  &&\mathbf{k} \in \clos \widetilde{\Omega}, \qquad j = 1, \ldots, n, \\
E_{n+1}(\mathbf{k}) &\ge c_* r_0^2, \qquad &&\mathbf{k} \in \clos \widetilde{\Omega}, \\
E_{n+1}(0)  &\ge 4 c_* r_0^2.
\end{alignat*}

\subsection{Прямой интеграл для оператора $\mathcal{A}$}
Под действием преобразования Гельфанда $\mathscr{U}$ оператор $\mathcal{A}$ раскладывается в прямой интеграл по операторам  $\mathcal{A} (\mathbf{k})$:
\begin{equation}
\label{Gelfand_A_decompose}
\mathscr{U} \mathcal{A}  \mathscr{U}^{-1} = \int_{\widetilde \Omega} \oplus \mathcal{A} (\mathbf{k}) \, d \mathbf{k}.
\end{equation}
Подразумевается следующее. Пусть $\mathbf{v} \in \Dom \mathcal{X}$, тогда $\tilde{\mathbf{v}}(\mathbf{k}, \cdot) \in \mathfrak{d}$ при п.в. $\mathbf{k} \in \widetilde \Omega$ и
\begin{equation}
\label{Gelfand_a_form}
\mathfrak{a}[\mathbf{v}, \mathbf{v}] = \int_{\widetilde{\Omega}} \mathfrak{a}(\mathbf{k}) [\tilde{\mathbf{v}}(\mathbf{k}, \cdot), \tilde{\mathbf{v}}(\mathbf{k}, \cdot)] \, d \mathbf{k} .
\end{equation}
Обратно, если для $\tilde{\mathbf{v}} \in \mathcal{K}$ справедливо $\tilde{\mathbf{v}}(\mathbf{k}, \cdot) \in \mathfrak{d}$ при п.в. $\mathbf{k} \in \widetilde \Omega$ и интеграл в~(\ref{Gelfand_a_form}) конечен, то $\mathbf{v} \in \Dom \mathcal{X}$ и выполнено~(\ref{Gelfand_a_form}).

\subsection{Включение операторов $\mathcal{A} (\mathbf{k})$ в абстрактную схему}
Если $d > 1$, то операторы  $\mathcal{A} (\mathbf{k})$ зависят от многомерного параметра $\mathbf{k}$. Следуя \cite[гл.~2]{BSu2003}, введём одномерный параметр $t = | \mathbf{k}|$. Будем использовать схему главы~\ref{abstr_part}. При этом все построения будут зависеть от дополнительного параметра $\boldsymbol{\theta} = \mathbf{k} / | \mathbf{k}| \in \mathbb{S}^{d-1}$ и мы должны следить за равномерностью оценок по $\boldsymbol{\theta}$. Пространства $\mathfrak{H}$ и $\mathfrak{H}_*$ определены в~(\ref{Spaces_H}). Положим $X(t) = X(t; \boldsymbol{\theta}) \coloneqq \mathcal{X}(t \boldsymbol{\theta})$. При этом выполнено $X(t; \boldsymbol{\theta}) = X_0 + t  X_1 (\boldsymbol{\theta})$, где $X_0 = h(\mathbf{x}) b (\mathbf{D}) f(\mathbf{x})$, $\Dom X_0 = \mathfrak{d}$, а $X_1 (\boldsymbol{\theta})$  --- ограниченный оператор умножения на матрицу $h(\mathbf{x}) b(\boldsymbol{\theta}) f(\mathbf{x})$. Далее, положим $A(t) = A(t; \boldsymbol{\theta}) \coloneqq \mathcal{A}(t \boldsymbol{\theta})$. Ядро $\mathfrak{N} = \Ker X_0$ описано в~(\ref{frakN}). Как было показано в \cite[гл.~2,~\S3]{BSu2003}, расстояние $d^0$ от точки $\lambda_0 = 0$ до остального спектра оператора $\mathcal{A}(0)$ подчинено оценке $d^0 \ge 4 c_* r_0^2$. Условие $n \le n_* = \dim \Ker X^*_0$ также выполнено. Более того, либо $n_* = n$ (если $m = n$), либо $n_* = \infty$ (если $m > n$).

Следуя пункту~\ref{abstr_X_A_section}, мы должны фиксировать $\delta \in (0, d^0/8)$. Так как $d^0 \ge 4 c_* r_0^2$, положим
\begin{equation}
\label{delta_fixation}
\delta = \frac{1}{4} c_* r^2_0 = \frac{1}{4} \alpha_0\|f^{-1} \|_{L_\infty}^{-2} \|g^{-1} \|_{L_\infty}^{-1} r^2_0.
\end{equation}  
Отметим, что в силу~(\ref{alpha0_alpha1}) и~(\ref{h_f_Linfty}) справедлива оценка
\begin{equation}
\label{X_1_estimate}
\| X_1 (\boldsymbol{\theta}) \| \le  \alpha^{1/2}_1 \| h \|_{L_{\infty}} \| f \|_{L_{\infty}}, \qquad \boldsymbol{\theta} \in \mathbb{S}^{d-1}.
\end{equation}

Для $t^0$ (см.~(\ref{abstr_t0})) примем следующее значение:
\begin{equation}
\label{t0_fixation}
t^0 = \delta^{1/2} \alpha_1^{-1/2} \| h \|_{L_{\infty}}^{-1} \|f\|_{L_{\infty}}^{-1} = \frac{r_0}{2} \alpha_0^{1/2} \alpha_1^{-1/2} \left( \| h \|_{L_{\infty}} \| h^{-1} \|_{L_{\infty}} \|f \|_{L_\infty} \|f^{-1} \|_{L_\infty} \right)^{-1}.
\end{equation}
Отметим, что $t^0 \le r_0/2$. Следовательно, шар $|\mathbf{k}| \le t^0$ целиком лежит внутри $\widetilde{\Omega}$. Важно, что величины $c_*$, $\delta$, $t^0$ (см.~(\ref{c_*}), (\ref{delta_fixation}), (\ref{t0_fixation})) не зависят от $\boldsymbol{\theta}$. Условие~\ref{abstr_nondegeneracy_cond} выполнено в силу~(\ref{c_*}). Росток $S(\boldsymbol{\theta})$ оператора $A(t, \boldsymbol{\theta})$ невырожден равномерно по $\boldsymbol{\theta}$: выполнено $S(\boldsymbol{\theta}) \ge c_* I_{\mathfrak{N}}$ (ср.~(\ref{abstr_S_nondegeneracy})).

\section{Эффективные характеристики оператора $\widehat{\mathcal{A}} = b(\mathbf{D})^* g(\mathbf{x}) b(\mathbf{D})$}
\subsection{Оператор $A(t, \boldsymbol{\theta})$ в случае $f = \mathbf{1}_n$}
Особую роль играет оператор $A(t, \boldsymbol{\theta})$ при $f = \mathbf{1}_n$. Условимся в этом случае отмечать все объекты шляпкой \textquotedblleft$\, \widehat{\phantom{\_}} \,$\textquotedblright. Тогда для оператора
\begin{equation}
\label{hatA}
\widehat{\mathcal{A}} = b(\mathbf{D})^* g(\mathbf{x}) b(\mathbf{D})
\end{equation}
семейство $\widehat{\mathcal{A}} (\mathbf{k})$ обозначается $\widehat{A} (t, \boldsymbol{\theta})$. Ядро~(\ref{frakN}) принимает вид
\begin{equation}
\label{Ker3}
\widehat{\mathfrak{N}} = \left\lbrace \mathbf{u} \in L_2 (\Omega; \mathbb{C}^n) \colon \mathbf{u} = \mathbf{c} \in \mathbb{C}^n \right\rbrace, 
\end{equation}
т.~е. $\widehat{\mathfrak{N}}$ состоит из постоянных вектор-функций. Ортопроектор $\widehat{P}$ пространства $L_2 (\Omega; \mathbb{C}^n)$ на подпространство (\ref{Ker3}) есть оператор усреднения по ячейке:
\begin{equation}
\label{Phat_projector}
\widehat{P} \mathbf{u} = |\Omega|^{-1} \int_{\Omega} \mathbf{u} (\mathbf{x}) \, d\mathbf{x}.
\end{equation}
Согласно~\cite[гл.~3,~\S1]{BSu2003}, спектральный росток $\widehat{S} (\boldsymbol{\theta}) \colon \widehat{\mathfrak{N}} \to \widehat{\mathfrak{N}} $ семейства $\widehat{A}(t, \boldsymbol{\theta})$ представим в виде $
\widehat{S} (\boldsymbol{\theta}) = b(\boldsymbol{\theta})^* g^0 b(\boldsymbol{\theta})$,  $\boldsymbol{\theta} \in \mathbb{S}^{d-1}$, где $g^0$~--- так называемая \emph{эффективная матрица}. Постоянная ($m \times m$)-матрица $g^0$ определяется следующим образом. Пусть $\Lambda \in \widetilde{H}^1 (\Omega)$~--- периодическая ($n \times m$)-матричнозначная функция, удовлетворяющая уравнению
\begin{equation}
\label{equation_for_Lambda}
b(\mathbf{D})^* g(\mathbf{x}) (b(\mathbf{D}) \Lambda (\mathbf{x}) + \mathbf{1}_m) = 0, \qquad \int_{\Omega} \Lambda (\mathbf{x}) \, d \mathbf{x} = 0.
\end{equation}
Эффективная матрица $g^0$ может быть определена в терминах матрицы $\Lambda (\mathbf{x})$:
\begin{align}
\label{g0}
&g^0 = | \Omega |^{-1} \int_{\Omega} \widetilde{g} (\mathbf{x}) \, d \mathbf{x},\\
\label{g_tilde}
&\widetilde{g} (\mathbf{x}) \coloneqq g(\mathbf{x})( b(\mathbf{D}) \Lambda (\mathbf{x}) + \mathbf{1}_m).
\end{align}
Выясняется, что матрица $g^0$ положительно определена. Рассмотрим символ
\begin{equation}
\label{effective_oper_symb}
\widehat{S} (\mathbf{k}) \coloneqq t^2 \widehat{S} (\boldsymbol{\theta}) = b(\mathbf{k})^* g^0 b(\mathbf{k}), \qquad \mathbf{k} \in \mathbb{R}^{d}.
\end{equation}
Выражение~(\ref{effective_oper_symb}) является символом ДО
\begin{equation}
\label{hatA0}
\widehat{\mathcal{A}}^0 = b(\mathbf{D})^* g^0 b(\mathbf{D}),
\end{equation}
действующего в $L_2(\mathbb{R}^d; \mathbb{C}^n)$ и называемого \emph{эффективным оператором} для оператора $\widehat{\mathcal{A}}$.

Пусть $\widehat{\mathcal{A}}^0 (\mathbf{k})$~--- операторное семейство в $L_2(\Omega; \mathbb{C}^n)$, отвечающее оператору~(\ref{hatA0}). Тогда $\widehat{\mathcal{A}}^0 (\mathbf{k}) = b(\mathbf{D} + \mathbf{k})^* g^0 b(\mathbf{D} + \mathbf{k})$ при периодических граничных условиях. Отсюда с учётом~(\ref{Phat_projector}) и~(\ref{effective_oper_symb}) вытекает тождество
\begin{equation}
\label{hatS_P=hatA^0_P}
\widehat{S} (\mathbf{k}) \widehat{P} = \widehat{\mathcal{A}}^0 (\mathbf{k}) \widehat{P}.
\end{equation}

\subsection{Свойства эффективной матрицы}

Следующие свойства $g^0$ были проверены в~\cite[гл.~3, теорема~1.5]{BSu2003}.
\begin{proposition}[\cite{BSu2003}]
	Для эффективной матрицы справедливы оценки
	\begin{equation}
	\label{Voigt_Reuss}
	\underline{g} \le g^0 \le \overline{g},
	\end{equation}
	где $\overline{g} \coloneqq | \Omega |^{-1} \int_{\Omega} g (\mathbf{x}) \, d \mathbf{x}$ и $\underline{g} \coloneqq \left( | \Omega |^{-1} \int_{\Omega} g (\mathbf{x})^{-1} \, d \mathbf{x}\right)^{-1}$.	В случае $m = n$ всегда выполнено $g^0 = \underline{g}$.
\end{proposition}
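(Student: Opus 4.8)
The plan is to derive both inequalities in~(\ref{Voigt_Reuss}) from the variational characterization of $g^0$, which I would establish first. For a constant $\boldsymbol{\xi} \in \mathbb{C}^m$ put $\mathbf{v}_{\boldsymbol{\xi}} \coloneqq \Lambda \boldsymbol{\xi} \in \widetilde{H}^1(\Omega; \mathbb{C}^n)$, so that the cell equation~(\ref{equation_for_Lambda}) reads $b(\mathbf{D})^* g\, (b(\mathbf{D}) \mathbf{v}_{\boldsymbol{\xi}} + \boldsymbol{\xi}) = 0$. Testing this identity (its weak form, over periodic test functions, so no boundary terms appear) against $\mathbf{v}_{\boldsymbol{\xi}}$ gives $\int_\Omega \langle g (b(\mathbf{D}) \mathbf{v}_{\boldsymbol{\xi}} + \boldsymbol{\xi}), b(\mathbf{D}) \mathbf{v}_{\boldsymbol{\xi}} \rangle \, d\mathbf{x} = 0$; adding the term with $\boldsymbol{\xi}$ and recalling from~(\ref{g_tilde}) that $\widetilde{g}\,\boldsymbol{\xi} = g (b(\mathbf{D}) \mathbf{v}_{\boldsymbol{\xi}} + \boldsymbol{\xi})$, I obtain from~(\ref{g0}) that $\langle g^0 \boldsymbol{\xi}, \boldsymbol{\xi} \rangle = |\Omega|^{-1} \int_\Omega \langle g\, \mathbf{w}_{\boldsymbol{\xi}}, \mathbf{w}_{\boldsymbol{\xi}} \rangle \, d\mathbf{x}$, where $\mathbf{w}_{\boldsymbol{\xi}} \coloneqq b(\mathbf{D}) \mathbf{v}_{\boldsymbol{\xi}} + \boldsymbol{\xi}$. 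The same computation, polarized in $\boldsymbol{\xi}, \boldsymbol{\eta}$, shows that $g^0$ is Hermitian (so that the matrix inequalities make sense) and, since $g$ is positive definite, that it realizes the minimum $\langle g^0 \boldsymbol{\xi}, \boldsymbol{\xi} \rangle = \min_{\mathbf{v} \in \widetilde{H}^1(\Omega; \mathbb{C}^n)} |\Omega|^{-1} \int_\Omega \langle g (b(\mathbf{D}) \mathbf{v} + \boldsymbol{\xi}), b(\mathbf{D}) \mathbf{v} + \boldsymbol{\xi} \rangle \, d\mathbf{x}$, whose Euler equation is exactly the cell problem.

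The upper bound $g^0 \le \overline{g}$ is then immediate: inserting the trial function $\mathbf{v} = 0$ into this minimum yields $\langle g^0 \boldsymbol{\xi}, \boldsymbol{\xi} \rangle \le |\Omega|^{-1} \int_\Omega \langle g \boldsymbol{\xi}, \boldsymbol{\xi} \rangle \, d\mathbf{x} = \langle \overline{g}\, \boldsymbol{\xi}, \boldsymbol{\xi} \rangle$ for every $\boldsymbol{\xi}$.

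For the lower bound $g^0 \ge \underline{g}$ --- the main step --- I would use a dual (complementary) estimate. Since $\mathbf{v}_{\boldsymbol{\xi}}$ is periodic, $\int_\Omega b(\mathbf{D}) \mathbf{v}_{\boldsymbol{\xi}} \, d\mathbf{x} = 0$, hence the field $\mathbf{w}_{\boldsymbol{\xi}}$ has mean $\boldsymbol{\xi}$, i.e. $|\Omega|^{-1} \int_\Omega \mathbf{w}_{\boldsymbol{\xi}} \, d\mathbf{x} = \boldsymbol{\xi}$. For an arbitrary constant $\boldsymbol{\eta} \in \mathbb{C}^m$ I write $\langle \boldsymbol{\xi}, \boldsymbol{\eta} \rangle = |\Omega|^{-1} \int_\Omega \langle g^{1/2} \mathbf{w}_{\boldsymbol{\xi}}, g^{-1/2} \boldsymbol{\eta} \rangle \, d\mathbf{x}$ and apply the Cauchy--Schwarz inequality; the two factors are precisely $\langle g^0 \boldsymbol{\xi}, \boldsymbol{\xi} \rangle$ and $|\Omega|^{-1} \int_\Omega \langle g^{-1} \boldsymbol{\eta}, \boldsymbol{\eta} \rangle \, d\mathbf{x} = \langle \underline{g}^{-1} \boldsymbol{\eta}, \boldsymbol{\eta} \rangle$, the latter by the very definition of $\underline{g}$. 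Choosing $\boldsymbol{\eta} = \underline{g}\, \boldsymbol{\xi}$ makes both $\langle \boldsymbol{\xi}, \boldsymbol{\eta} \rangle$ and $\langle \underline{g}^{-1} \boldsymbol{\eta}, \boldsymbol{\eta} \rangle$ equal to $\langle \underline{g}\, \boldsymbol{\xi}, \boldsymbol{\xi} \rangle$, so the inequality collapses to $\langle \underline{g}\, \boldsymbol{\xi}, \boldsymbol{\xi} \rangle^2 \le \langle g^0 \boldsymbol{\xi}, \boldsymbol{\xi} \rangle \, \langle \underline{g}\, \boldsymbol{\xi}, \boldsymbol{\xi} \rangle$, that is $\underline{g} \le g^0$.

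Finally, in the case $m = n$ the matrix $b(\boldsymbol{\theta})$ is invertible for $\boldsymbol{\theta} \ne 0$ by~(\ref{alpha0_alpha1}), and I would prove equality by exhibiting the Cauchy--Schwarz extremizer explicitly. Set $\mathbf{c} = \underline{g}\, \boldsymbol{\xi}$; then $g^{-1} \mathbf{c} - \boldsymbol{\xi}$ has zero mean, so, passing to Fourier coefficients and inverting $b(\mathbf{b})$ for each $0 \ne \mathbf{b} \in \widetilde{\Gamma}$ (with $\| b(\mathbf{b})^{-1} \| \le C |\mathbf{b}|^{-1}$ by homogeneity and the rank condition), one solves $b(\mathbf{D}) \mathbf{v} = g^{-1} \mathbf{c} - \boldsymbol{\xi}$ for some $\mathbf{v} \in \widetilde{H}^1(\Omega; \mathbb{C}^n)$. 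The corresponding field $\mathbf{w} = b(\mathbf{D}) \mathbf{v} + \boldsymbol{\xi} = g^{-1} \mathbf{c}$ satisfies $g \mathbf{w} = \mathbf{c} = \const$, whence $b(\mathbf{D})^* (g \mathbf{w}) = 0$; by uniqueness for the cell problem this $\mathbf{v}$ agrees with $\mathbf{v}_{\boldsymbol{\xi}}$ up to a constant, so $\widetilde{g}\, \boldsymbol{\xi} = g \mathbf{w} = \underline{g}\, \boldsymbol{\xi}$ is constant and $g^0 \boldsymbol{\xi} = |\Omega|^{-1} \int_\Omega \widetilde{g}\, \boldsymbol{\xi} \, d\mathbf{x} = \underline{g}\, \boldsymbol{\xi}$; as $\boldsymbol{\xi}$ is arbitrary, $g^0 = \underline{g}$. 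The main technical obstacle throughout is the solvability and uniqueness of~(\ref{equation_for_Lambda}) in $\widetilde{H}^1$ together with the justification of the weak testing and of the Fourier inversion of $b(\mathbf{b})$; once these are secured, both bounds reduce to a single application of Cauchy--Schwarz with the weight $g$.
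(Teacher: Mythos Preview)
Your argument is correct and complete. The paper itself does not give a proof of this proposition; it merely quotes the result from \cite[гл.~3, теорема~1.5]{BSu2003}. The variational approach you outline --- the energy identity $\langle g^0\boldsymbol{\xi},\boldsymbol{\xi}\rangle = |\Omega|^{-1}\int_\Omega \langle g\,\mathbf{w}_{\boldsymbol{\xi}},\mathbf{w}_{\boldsymbol{\xi}}\rangle\,d\mathbf{x}$, the trial $\mathbf{v}=0$ for the upper bound, the weighted Cauchy--Schwarz with $g^{1/2}$ for the lower bound, and the explicit Fourier inversion of $b(\mathbf{b})$ when $m=n$ --- is precisely the classical argument used in \cite{BSu2003}, so there is nothing to compare.
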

Оценки~(\ref{Voigt_Reuss}) известны в теории усреднения для конкретных ДО как вилка Фойгта--Рейсса. Выделим теперь условия, при которых реализуется верхняя или нижняя грань в~(\ref{Voigt_Reuss}). Следующие утверждения были проверены в~\cite[гл.~3, предложения~1.6, 1.7]{BSu2003}.
\begin{proposition}[\cite{BSu2003}]
	Равенство $g^0 = \overline{g}$ равносильно соотношениям
	\begin{equation}
	\label{g0=overline_g_relat}
	b(\mathbf{D})^* \mathbf{g}_k (\mathbf{x}) = 0, \quad k = 1, \ldots, m,
	\end{equation}
	где $\mathbf{g}_k (\mathbf{x}), \; k = 1, \ldots,m$~--- столбцы матрицы $g (\mathbf{x})$.
\end{proposition}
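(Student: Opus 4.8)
The plan is to compare the Hermitian forms $\langle g^0\mathbf{c},\mathbf{c}\rangle$ and $\langle\overline{g}\,\mathbf{c},\mathbf{c}\rangle$ for $\mathbf{c}\in\mathbb{C}^m$ and to express their difference as a single nonnegative energy integral. Fix $\mathbf{c}$ and put $\mathbf{v}_{\mathbf{c}}\coloneqq\Lambda\mathbf{c}\in\widetilde{H}^1(\Omega;\mathbb{C}^n)$, $\mathbf{w}_{\mathbf{c}}\coloneqq b(\mathbf{D})\mathbf{v}_{\mathbf{c}}+\mathbf{c}$. Applying the matrix equation~(\ref{equation_for_Lambda}) to the constant vector $\mathbf{c}$ gives $b(\mathbf{D})^*g(\mathbf{x})\mathbf{w}_{\mathbf{c}}=0$; pairing this with an arbitrary $\boldsymbol{\phi}\in\widetilde{H}^1(\Omega;\mathbb{C}^n)$ and integrating by parts (the periodic boundary terms vanish) yields the orthogonality relation
\begin{equation*}
\int_{\Omega}\langle g(\mathbf{x})\,\mathbf{w}_{\mathbf{c}},\,b(\mathbf{D})\boldsymbol{\phi}\rangle\,d\mathbf{x}=0,
\end{equation*}
which is the computational core of the whole argument.

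Using this relation I would first recast the effective form. By~(\ref{g0}),~(\ref{g_tilde}) one has $\langle g^0\mathbf{c},\mathbf{c}\rangle=|\Omega|^{-1}\int_{\Omega}\langle g\,\mathbf{w}_{\mathbf{c}},\mathbf{c}\rangle\,d\mathbf{x}$; writing $\mathbf{c}=\mathbf{w}_{\mathbf{c}}-b(\mathbf{D})\mathbf{v}_{\mathbf{c}}$ and killing the extra term by the orthogonality relation (with $\boldsymbol{\phi}=\mathbf{v}_{\mathbf{c}}$) turns this into the symmetric energy expression $\langle g^0\mathbf{c},\mathbf{c}\rangle=|\Omega|^{-1}\int_{\Omega}\langle g\,\mathbf{w}_{\mathbf{c}},\mathbf{w}_{\mathbf{c}}\rangle\,d\mathbf{x}$. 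Expanding $\mathbf{w}_{\mathbf{c}}=\mathbf{c}+b(\mathbf{D})\mathbf{v}_{\mathbf{c}}$ in $\langle\overline{g}\,\mathbf{c},\mathbf{c}\rangle-\langle g^0\mathbf{c},\mathbf{c}\rangle$ and cancelling the cross terms by the orthogonality relation once more, I expect to reach the clean identity
\begin{equation*}
\langle(\overline{g}-g^0)\mathbf{c},\mathbf{c}\rangle=|\Omega|^{-1}\int_{\Omega}\langle g(\mathbf{x})\,b(\mathbf{D})\Lambda(\mathbf{x})\mathbf{c},\,b(\mathbf{D})\Lambda(\mathbf{x})\mathbf{c}\rangle\,d\mathbf{x}\ge 0,
\end{equation*}
which incidentally reproves the upper bound in~(\ref{Voigt_Reuss}).

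Since $\overline{g}-g^0$ is a nonnegative Hermitian matrix, $g^0=\overline{g}$ holds if and only if the left-hand side vanishes for every $\mathbf{c}\in\mathbb{C}^m$. As $g(\mathbf{x})=h(\mathbf{x})^*h(\mathbf{x})$ is uniformly positive definite by~(\ref{h_f_Linfty}), this occurs precisely when $b(\mathbf{D})\Lambda(\mathbf{x})\mathbf{c}=0$ for a.e.\ $\mathbf{x}$ and all $\mathbf{c}$, i.e.\ when $b(\mathbf{D})\Lambda=0$. It then remains to match this with~(\ref{g0=overline_g_relat}): if $b(\mathbf{D})\Lambda=0$, then~(\ref{equation_for_Lambda}) collapses to $b(\mathbf{D})^*g(\mathbf{x})=0$, which columnwise is exactly $b(\mathbf{D})^*\mathbf{g}_k=0$, $k=1,\ldots,m$; conversely, if $b(\mathbf{D})^*g=0$ then $\Lambda=0$ already solves~(\ref{equation_for_Lambda}) with the normalization $\int_{\Omega}\Lambda\,d\mathbf{x}=0$, so by uniqueness of the cell solution $\Lambda=0$, whence $b(\mathbf{D})\Lambda=0$.

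The step requiring most care is the cross-term cancellation leading to the displayed identity: one must use the Hermitian symmetry of $g$ to see that both $\int_{\Omega}\langle g\,b(\mathbf{D})\Lambda\mathbf{c},\mathbf{c}\rangle\,d\mathbf{x}$ and its complex conjugate reduce, via the orthogonality relation, to $-\int_{\Omega}\langle g\,b(\mathbf{D})\Lambda\mathbf{c},b(\mathbf{D})\Lambda\mathbf{c}\rangle\,d\mathbf{x}$. Once the energy identity is in place, the remaining equivalences are routine.
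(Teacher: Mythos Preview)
The paper does not give its own proof of this proposition; it is quoted from \cite[гл.~3, предложение~1.6]{BSu2003} and merely stated. Your argument is correct and is essentially the standard one: the energy identity $\langle(\overline{g}-g^0)\mathbf{c},\mathbf{c}\rangle=|\Omega|^{-1}\int_{\Omega}\langle g\,b(\mathbf{D})\Lambda\mathbf{c},\,b(\mathbf{D})\Lambda\mathbf{c}\rangle\,d\mathbf{x}$ follows from the orthogonality relation as you describe, and the equivalence $b(\mathbf{D})\Lambda=0\Leftrightarrow b(\mathbf{D})^*g=0$ is handled cleanly via the cell equation and uniqueness of its mean-zero solution.
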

\begin{proposition}[\cite{BSu2003}]
	Равенство $g^0 = \underline{g}$ равносильно представлениям
	\begin{equation}
	\label{g0=underline_g_relat}
	\mathbf{l}_k (\mathbf{x}) = \mathbf{l}^0_k + b(\mathbf{D}) \mathbf{w}_k(\mathbf{x}), \quad \mathbf{l}^0_k \in \mathbb{C}^m, \quad \mathbf{w}_k \in \widetilde{H}^1 (\Omega; \mathbb{C}^n), \quad k = 1, \ldots,m,
	\end{equation}
	где $\mathbf{l}_k (\mathbf{x}), \; k = 1, \ldots,m$~--- столбцы матрицы $g (\mathbf{x})^{-1}$.
\end{proposition}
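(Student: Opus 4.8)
The plan is to characterise $g^0$ by a dual (Reuss-type) variational principle and then read off the equality case. Throughout, call a field $\mathbf{z}\in L_2(\Omega;\mathbb{C}^m)$ \emph{solenoidal} if $b(\mathbf{D})^*\mathbf{z}=0$ weakly, and denote by $e_1,\ldots,e_m$ the standard basis of $\mathbb{C}^m$, so that the columns of $g^{-1}$ are $\mathbf{l}_k=g^{-1}e_k$. The starting point is three facts about $\widetilde g$ from~(\ref{g_tilde}). First, each column of $\widetilde g$ is solenoidal: $b(\mathbf{D})^*\widetilde g=0$, which is precisely the cell equation~(\ref{equation_for_Lambda}). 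Second, $|\Omega|^{-1}\int_\Omega\widetilde g\,d\mathbf{x}=g^0$ by the very definition~(\ref{g0}). Third, since $g^{-1}\widetilde g=b(\mathbf{D})\Lambda+\mathbf{1}_m$, the computation that produces~(\ref{g0}) (using~(\ref{equation_for_Lambda}) to cancel the cross terms) also yields the identity $|\Omega|^{-1}\int_\Omega\langle g^{-1}\widetilde g\,\eta,\widetilde g\,\eta\rangle\,d\mathbf{x}=\langle g^0\eta,\eta\rangle$ for every $\eta\in\mathbb{C}^m$.

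Next I would establish the dual principle. Fix $\zeta\in\mathbb{C}^m$ and minimise $J(\mathbf{z})=|\Omega|^{-1}\int_\Omega\langle g^{-1}\mathbf{z},\mathbf{z}\rangle\,d\mathbf{x}$ over solenoidal $\mathbf{z}$ with $|\Omega|^{-1}\int_\Omega\mathbf{z}\,d\mathbf{x}=\zeta$. The field $\mathbf{y}_*\coloneqq\widetilde g\,(g^0)^{-1}\zeta$ is admissible (solenoidal by the first fact, of mean $\zeta$ by the second). Writing any admissible $\mathbf{z}=\mathbf{y}_*+\mathbf{r}$ with $\mathbf{r}$ solenoidal and of mean zero, the cross term vanishes:
\[
g^{-1}\mathbf{y}_*=b(\mathbf{D})\bigl(\Lambda(g^0)^{-1}\zeta\bigr)+(g^0)^{-1}\zeta ,
\]
and the potential part pairs to zero with $\mathbf{r}$ by integration by parts ($b(\mathbf{D})^*\mathbf{r}=0$), while the constant part pairs to zero because $\int_\Omega\mathbf{r}\,d\mathbf{x}=0$. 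Hence $J(\mathbf{z})=J(\mathbf{y}_*)+J(\mathbf{r})\ge J(\mathbf{y}_*)=\langle(g^0)^{-1}\zeta,\zeta\rangle$ (the last equality by the third fact), with equality iff $\mathbf{r}=0$, i.e. $\mathbf{y}_*$ is the unique minimiser. Taking the constant competitor $\mathbf{z}\equiv\zeta$ gives $\langle(g^0)^{-1}\zeta,\zeta\rangle\le|\Omega|^{-1}\int_\Omega\langle g^{-1}\zeta,\zeta\rangle\,d\mathbf{x}=\langle\underline g^{-1}\zeta,\zeta\rangle$, recovering the Reuss bound of~(\ref{Voigt_Reuss}).

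Now the equality analysis. By the bound just obtained, $g^0=\underline g$ holds iff $\langle(g^0)^{-1}\zeta,\zeta\rangle=\langle\underline g^{-1}\zeta,\zeta\rangle$ for all $\zeta$, i.e. iff the constant field $\zeta$ attains the minimum of $J$; by uniqueness of the minimiser this means the constant \emph{is} the minimiser for every $\zeta$. The same cross-term computation shows that a constant field is the minimiser exactly when $g^{-1}\zeta$ is a potential plus a constant, i.e. $g^{-1}\zeta\in\{b(\mathbf{D})\mathbf{v}:\mathbf{v}\in\widetilde H^1(\Omega;\mathbb{C}^n)\}\oplus\mathbb{C}^m$ (the forward implication uses $\zeta=\mathbf{y}_*$ together with the displayed formula for $g^{-1}\mathbf{y}_*$; the converse re-runs the expansion with $\mathbf{y}_*$ replaced by $\zeta$). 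Specialising to $\zeta=e_k$ says precisely that $\mathbf{l}_k=g^{-1}e_k=\mathbf{l}^0_k+b(\mathbf{D})\mathbf{w}_k$ with $\mathbf{w}_k\in\widetilde H^1(\Omega;\mathbb{C}^n)$; averaging over $\Omega$ and using $\int_\Omega b(\mathbf{D})\mathbf{w}_k\,d\mathbf{x}=0$ fixes the constant as $\mathbf{l}^0_k=\underline g^{-1}e_k$. This is exactly~(\ref{g0=underline_g_relat}), and both implications run through the single equivalence ``constant is the minimiser $\Leftrightarrow$ $g^{-1}e_k$ is potential-plus-constant''.

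The main obstacle is the dual variational principle itself, and within it the vanishing of the cross term; everything rests on the solenoidality of $\widetilde g$ (the cell equation) and on $g^{-1}\mathbf{y}_*$ being a $b(\mathbf{D})$-potential plus a constant. Equivalently, one must justify the orthogonal splitting $L_2(\Omega;\mathbb{C}^m)=\{b(\mathbf{D})\mathbf{v}\}\oplus\mathbb{C}^m\oplus\{\mathbf{z}:b(\mathbf{D})^*\mathbf{z}=0,\ \int_\Omega\mathbf{z}\,d\mathbf{x}=0\}$ and characterise the minimiser by its Euler equation. The remaining steps — identifying $\mathbf{l}^0_k=\underline g^{-1}e_k$ and passing between a general $\zeta$ and the basis columns — are routine.
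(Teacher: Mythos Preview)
The paper does not prove this proposition; it is quoted from~\cite[гл.~3, предложение~1.7]{BSu2003} without argument. Your proof via the dual (Reuss) variational principle is correct and is essentially the standard route: the minimiser of $J(\mathbf{z})=|\Omega|^{-1}\int_\Omega\langle g^{-1}\mathbf{z},\mathbf{z}\rangle$ over solenoidal fields of prescribed mean $\zeta$ is $\widetilde g\,(g^0)^{-1}\zeta$, the constant competitor gives the Reuss bound, and equality forces the constant to be the minimiser, whose Euler--Lagrange condition is exactly that $g^{-1}\zeta$ lies in $\operatorname{Ran} b(\mathbf{D})\oplus\mathbb{C}^m$.

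One point you flag as the ``main obstacle'' deserves an explicit sentence in the write-up: the orthogonal decomposition
\[
L_2(\Omega;\mathbb{C}^m)=\operatorname{Ran} b(\mathbf{D})\oplus\mathbb{C}^m\oplus\{\mathbf{z}:b(\mathbf{D})^*\mathbf{z}=0,\ \textstyle\int_\Omega\mathbf{z}=0\}
\]
requires that $\operatorname{Ran} b(\mathbf{D})$ be closed. This follows from the rank condition~(\ref{alpha0_alpha1}) together with the Poincar\'e inequality on $\widetilde H^1(\Omega;\mathbb{C}^n)\ominus\mathbb{C}^n$: for mean-zero $\mathbf{v}$ one has $\|b(\mathbf{D})\mathbf{v}\|_{L_2}^2\ge\alpha_0\sum_{0\ne\mathbf{b}\in\widetilde\Gamma}|\mathbf{b}|^2|\hat{\mathbf{v}}_{\mathbf{b}}|^2\ge\alpha_0(2r_0)^2\|\mathbf{v}\|_{L_2}^2$, so $b(\mathbf{D})$ is bounded below off its kernel and the range is closed. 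With that justified, your argument is complete.
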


\subsection{Аналитические ветви собственных значений и собственных элементов}

Аналитические (по $t$) ветви собственных значений $\widehat{\lambda}_l (t, \boldsymbol{\theta})$ и ветви собственных элементов $\widehat{\varphi}_l (t, \boldsymbol{\theta})$ оператора $\widehat{\mathcal{A}} (t, \boldsymbol{\theta})$ допускают степенные разложения вида~(\ref{abstr_A(t)_eigenvalues_series}),~(\ref{abstr_A(t)_eigenvectors_series}) с коэффициентами, зависящими от $\boldsymbol{\theta}$ (интервал сходимости $t = |\mathbf{k}| \le t_* (\boldsymbol{\theta})$ мы не контролируем):
\begin{align}
\label{hatA_eigenvalues_series}
 \widehat{\lambda}_l (t, \boldsymbol{\theta}) &= \widehat{\gamma}_l (\boldsymbol{\theta}) t^2 + \widehat{\mu}_l (\boldsymbol{\theta}) t^3 + \widehat{\nu}_l (\boldsymbol{\theta}) t^4 + \ldots, & l &= 1, \ldots, n,   \\
\label{hatA_eigenvectors_series}
\widehat{\varphi}_l (t, \boldsymbol{\theta}) &= \widehat{\omega}_l (\boldsymbol{\theta}) + t \widehat{\psi}^{(1)}_l (\boldsymbol{\theta}) + \ldots, & l &= 1, \ldots, n.
\end{align}
Согласно~(\ref{abstr_S_eigenvectors}) числа $\widehat{\gamma}_l (\boldsymbol{\theta})$ и элементы $\widehat{\omega}_l (\boldsymbol{\theta})$ являются собственными значениями и собственными элементами ростка: $
b(\boldsymbol{\theta})^* g^0 b(\boldsymbol{\theta}) \widehat{\omega}_l (\boldsymbol{\theta}) = \widehat{\gamma}_l (\boldsymbol{\theta}) \widehat{\omega}_l (\boldsymbol{\theta})$, $l = 1, \ldots, n$.

\subsection{Оператор $\widehat{N} (\boldsymbol{\theta})$}

Нам понадобится описать оператор $N$ (в абстрактных терминах определённый в теореме~\ref{abstr_threshold_approx_thrm_2}). Как проверено в~\cite[\S4]{BSu2005-2}, для семейства $\widehat{A} (t, \boldsymbol{\theta})$ этот оператор принимает вид
\begin{align}
\label{hatN(theta)}
\widehat{N} (\boldsymbol{\theta}) &= b(\boldsymbol{\theta})^* L(\boldsymbol{\theta}) b(\boldsymbol{\theta}) \widehat{P}, \\ 
\notag
L (\boldsymbol{\theta}) &\coloneqq | \Omega |^{-1} \int_{\Omega} (\Lambda (\mathbf{x})^* b(\boldsymbol{\theta})^* \widetilde{g}(\mathbf{x}) + \widetilde{g}(\mathbf{x})^* b(\boldsymbol{\theta}) \Lambda (\mathbf{x}) ) \, d \mathbf{x}.
\end{align}
Здесь $\Lambda (\mathbf{x})$~--- $\Gamma$-периодическое решение задачи~(\ref{equation_for_Lambda}), а $\widetilde{g}(\mathbf{x})$~--- матрица-функция~(\ref{g_tilde}).

В~\cite[\S4]{BSu2005} указаны некоторые достаточные условия, при которых $\widehat{N} (\boldsymbol{\theta}) = 0$.
\begin{proposition}[\cite{BSu2005}]
	\label{N=0_proposit}
	Пусть выполнено хотя бы одно из следующих предположений:
	\begin{enumerate}[label=\emph{\arabic*$^{\circ}.$}, ref=\arabic*$^{\circ}$, leftmargin=2.5\parindent]
		\setlength\itemsep{-0.1em}
		\item \label{N=0_proposit_p1}  
		$\widehat{\mathcal{A}} = \mathbf{D}^* g(\mathbf{x}) \mathbf{D}$, где $g(\mathbf{x})$~--- симметричная матрица с вещественными элементами.
		\item Выполнены соотношения~\emph{(\ref{g0=overline_g_relat})}, т.~е. $g^0 = \overline{g}$.
		\item Выполнены соотношения~\emph{(\ref{g0=underline_g_relat})}, т.~е. $g^0 = \underline{g}$. \emph{(}В частности, это автоматически выполнено, если $m = n$.\emph{)} 
	\end{enumerate}
	Тогда $\widehat{N} (\boldsymbol{\theta}) = 0$ при всех $\boldsymbol{\theta} \in \mathbb{S}^{d-1}$.
\end{proposition}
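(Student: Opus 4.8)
The plan is to read everything off the explicit representation \eqref{hatN(theta)}, namely $\widehat{N}(\boldsymbol{\theta}) = b(\boldsymbol{\theta})^* L(\boldsymbol{\theta}) b(\boldsymbol{\theta}) \widehat{P}$, so that it suffices to control the $(m\times m)$-matrix $L(\boldsymbol{\theta})$. Setting $T(\boldsymbol{\theta}) \coloneqq |\Omega|^{-1} \int_{\Omega} \Lambda(\mathbf{x})^* b(\boldsymbol{\theta})^* \widetilde{g}(\mathbf{x})\, d\mathbf{x}$, the two summands under the integral defining $L(\boldsymbol{\theta})$ are mutual adjoints, so $L(\boldsymbol{\theta}) = T(\boldsymbol{\theta}) + T(\boldsymbol{\theta})^*$ is Hermitian. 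In the two extremal cases of the Voigt--Reuss bracket I will prove the stronger fact $L(\boldsymbol{\theta}) = 0$, while in the acoustic case I will only need that the scalar form $b(\boldsymbol{\theta})^* L(\boldsymbol{\theta}) b(\boldsymbol{\theta})$ vanishes. The conceptual point is that both extremal cases correspond to the absence of a genuine corrector effect: either $\Lambda = 0$ or $\widetilde{g}$ is constant.

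Consider first $g^0 = \overline{g}$. By the characterisation of this equality it is equivalent to \eqref{g0=overline_g_relat}, i.e. $b(\mathbf{D})^* g = 0$ columnwise. Then in \eqref{equation_for_Lambda} the inhomogeneous term $b(\mathbf{D})^* g\, \mathbf{1}_m = b(\mathbf{D})^* g$ vanishes, so $\Lambda$ is a zero-mean periodic solution of the homogeneous equation $b(\mathbf{D})^* g\, b(\mathbf{D})\Lambda = 0$, forcing $\Lambda = 0$ and hence $T(\boldsymbol{\theta}) = 0$, $L(\boldsymbol{\theta}) = 0$. Next consider $g^0 = \underline{g}$, equivalent to \eqref{g0=underline_g_relat}. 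Collecting the $\mathbf{w}_k$ into a matrix $W$ and the constants $\mathbf{l}^0_k$ into $L^0$, the representation reads $g^{-1} = L^0 + b(\mathbf{D}) W$; averaging over $\Omega$ gives $L^0 = \overline{g^{-1}} = \underline{g}^{-1} = (g^0)^{-1}$. Multiplying by $g^0$ yields $g^{-1} g^0 - \mathbf{1}_m = b(\mathbf{D})(W g^0)$, that is $g^0 = g\,(b(\mathbf{D})(W g^0) + \mathbf{1}_m)$. Comparing with \eqref{g_tilde} and using that the zero-mean corrector in \eqref{equation_for_Lambda} is unique (the corrector being $\Lambda = W g^0 - \overline{W g^0}$), I conclude that $\widetilde{g}(\mathbf{x}) \equiv g^0$ is constant. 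Since then the factor $b(\boldsymbol{\theta})^* g^0$ comes out of the integral and $\int_{\Omega} \Lambda\, d\mathbf{x} = 0$, we again get $T(\boldsymbol{\theta}) = 0$ and $L(\boldsymbol{\theta}) = 0$.

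Finally, in the acoustic case $\widehat{\mathcal{A}} = \mathbf{D}^* g \mathbf{D}$ with real symmetric $g$ we have $n = 1$. Here I argue directly on $L(\boldsymbol{\theta})$: reality of $g$ turns the right-hand side of \eqref{equation_for_Lambda} into a purely imaginary periodic object while the operator $b(\mathbf{D})^* g\, b(\mathbf{D}) = -\operatorname{div}(g\nabla\,\cdot\,)$ is real, so the corrector $\Lambda$ is purely imaginary. Writing $b(\boldsymbol{\theta}) = \boldsymbol{\theta}$ and $\phi = \Lambda\boldsymbol{\theta} = i\phi_0$ with $\phi_0$ real, one checks that $b(\boldsymbol{\theta})^* \widetilde{g}\, b(\boldsymbol{\theta}) = \boldsymbol{\theta}^* g(\nabla\phi_0 + \boldsymbol{\theta})$ is real while $b(\boldsymbol{\theta})^* \Lambda^* = \overline{\phi}$ is purely imaginary, so $b(\boldsymbol{\theta})^* T(\boldsymbol{\theta}) b(\boldsymbol{\theta})$ is purely imaginary and $b(\boldsymbol{\theta})^* L(\boldsymbol{\theta}) b(\boldsymbol{\theta}) = 2\Re\, b(\boldsymbol{\theta})^* T(\boldsymbol{\theta}) b(\boldsymbol{\theta}) = 0$. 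As a cross-check one can argue spectrally: by Remark~\ref{abstr_N_remark} ($N_* = 0$ when $n=1$) we have $\widehat{N}(\boldsymbol{\theta}) = \widehat{\mu}_1(\boldsymbol{\theta})\widehat{P}$, and reality of $g$ gives the complex-conjugation equivalence $C\widehat{\mathcal{A}}(\mathbf{k})C^{-1} = \widehat{\mathcal{A}}(-\mathbf{k})$, whence the simple lowest band $E_1(\mathbf{k})$ is even and real-analytic at $\mathbf{k}=0$, so its homogeneous cubic part $\widehat{\mu}_1(\boldsymbol{\theta})|\mathbf{k}|^3$ vanishes identically in \eqref{hatA_eigenvalues_series}.

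The routine part is the case $g^0 = \overline{g}$, which is immediate. The main work is the case $g^0 = \underline{g}$: the crux is recognising that the affine representation \eqref{g0=underline_g_relat} of the columns of $g^{-1}$ forces the corrected matrix $\widetilde{g}$ to reduce to the constant $g^0$, which is where the identification $L^0 = (g^0)^{-1}$ and the uniqueness of the zero-mean corrector must be used carefully. In the spectral version of the acoustic case the delicate point is the joint real-analyticity of $E_1$ at $\mathbf{k}=0$ (guaranteed by simplicity of the lowest band), which is what licenses passing from evenness of $E_1$ to the vanishing of its cubic Taylor term; the direct $L(\boldsymbol{\theta})$-argument avoids this subtlety altogether and is therefore the safer route.
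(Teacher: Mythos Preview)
Your proof is correct in all three cases. Note that the paper itself does not supply a proof of this proposition; it is quoted from \cite{BSu2005} (see the sentence preceding the proposition), so there is no in-paper argument to compare against directly. Your approach --- reading off the explicit formula \eqref{hatN(theta)} and showing that either $\Lambda=0$ (case $g^0=\overline{g}$), or $\widetilde{g}\equiv g^0$ is constant (case $g^0=\underline{g}$), or the scalar $b(\boldsymbol{\theta})^*L(\boldsymbol{\theta})b(\boldsymbol{\theta})$ is the real part of a purely imaginary quantity (acoustic case) --- is exactly the natural route and matches the original argument in \cite{BSu2005}. The alternative spectral argument you give for the acoustic case (evenness of the simple lowest band $E_1(\mathbf{k})$ via $C\widehat{\mathcal A}(\mathbf{k})C^{-1}=\widehat{\mathcal A}(-\mathbf{k})$, combined with real-analyticity at $\mathbf{k}=0$) is also valid and is a nice complement; your remark that the direct $L(\boldsymbol{\theta})$-computation is the safer route is well taken, since it avoids invoking joint analyticity.
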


Напомним (см.~замечание~\ref{abstr_N_remark}), что $\widehat{N} (\boldsymbol{\theta}) = \widehat{N}_0 (\boldsymbol{\theta}) + \widehat{N}_* (\boldsymbol{\theta})$, где оператор $\widehat{N}_0 (\boldsymbol{\theta})$ диагонален в базисе $\{ \widehat{\omega}_l (\boldsymbol{\theta})\}_{l=1}^n$, а оператор $\widehat{N}_* (\boldsymbol{\theta})$ имеет нулевые диагональные элементы. При этом
\begin{equation*}
(\widehat{N} (\boldsymbol{\theta}) \widehat{\omega}_l (\boldsymbol{\theta}), \widehat{\omega}_l (\boldsymbol{\theta}))_{L_2 (\Omega)} = (\widehat{N}_0 (\boldsymbol{\theta}) \widehat{\omega}_l (\boldsymbol{\theta}), \widehat{\omega}_l (\boldsymbol{\theta}))_{L_2 (\Omega)} = \widehat{\mu}_l (\boldsymbol{\theta}), \qquad l=1, \ldots, n.
\end{equation*}

В~\cite[п.~4.3]{BSu2005-2} было доказано следующее предложение.
\begin{proposition}[\cite{BSu2005-2}]
	Пусть $b(\boldsymbol{\theta})$ и $g (\mathbf{x})$~--- матрицы с вещественными элементами. Пусть в разложениях~\emph{(\ref{hatA_eigenvectors_series})}  \textquotedblleft зародыши\textquotedblright \ $\widehat{\omega}_l (\boldsymbol{\theta})$, $l = 1, \ldots, n$, можно выбрать вещественными. Тогда в~\emph{(\ref{hatA_eigenvalues_series})} выполнено $\widehat{\mu}_l (\boldsymbol{\theta}) = 0$, $l=1, \ldots, n$, то есть, $\widehat{N}_0 (\boldsymbol{\theta}) = 0$.
\end{proposition}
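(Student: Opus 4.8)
Предлагаемый план доказательства. Согласно уже установленному в~тексте равенству $(\widehat{N}(\boldsymbol{\theta})\widehat{\omega}_l(\boldsymbol{\theta}),\widehat{\omega}_l(\boldsymbol{\theta}))_{L_2(\Omega)} = \widehat{\mu}_l(\boldsymbol{\theta})$ и диагональности оператора $\widehat{N}_0(\boldsymbol{\theta})$ в базисе зародышей, равенство $\widehat{N}_0(\boldsymbol{\theta}) = 0$ равносильно обращению в нуль всех чисел $\widehat{\mu}_l(\boldsymbol{\theta})$. Поэтому достаточно доказать, что $(\widehat{N}(\boldsymbol{\theta})\widehat{\omega}_l,\widehat{\omega}_l)_{L_2(\Omega)} = 0$ при всех $l$. Воспользуюсь явным представлением~(\ref{hatN(theta)}): $\widehat{N}(\boldsymbol{\theta}) = b(\boldsymbol{\theta})^* L(\boldsymbol{\theta}) b(\boldsymbol{\theta})\widehat{P}$. Поскольку зародыши суть постоянные вектор-функции $\widehat{\omega}_l = \mathbf{c}_l \in \mathbb{C}^n$ (см.~(\ref{Ker3})), причём по условию их можно выбрать вещественными, скалярное произведение двух постоянных функций даёт $\widehat{\mu}_l = |\Omega|\, \langle L(\boldsymbol{\theta})\boldsymbol{\zeta}_l, \boldsymbol{\zeta}_l\rangle_{\mathbb{C}^m}$, где $\boldsymbol{\zeta}_l = b(\boldsymbol{\theta})\mathbf{c}_l$~--- \emph{вещественный} вектор (здесь используется вещественность $b(\boldsymbol{\theta})$).

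Ключевой шаг~--- выяснить структуру матрицы $L(\boldsymbol{\theta})$. Сначала я показал бы, что решение $\Lambda(\mathbf{x})$ задачи~(\ref{equation_for_Lambda}) чисто мнимо. Для этого применю комплексное сопряжение к уравнению $b(\mathbf{D})^* g (b(\mathbf{D})\Lambda + \mathbf{1}_m) = 0$. Так как $\overline{b(\mathbf{D})\Phi} = -b(\mathbf{D})\overline{\Phi}$ (ибо $\overline{\mathbf{D}} = -\mathbf{D}$), а матрицы $b$ и $g$ вещественны, функция $-\overline{\Lambda}$ удовлетворяет тому же уравнению и тому же условию нулевого среднего. В силу единственности решения задачи на ячейке получаем $\overline{\Lambda} = -\Lambda$. Отсюда сразу следует, что матрица-функция $\widetilde{g} = g(b(\mathbf{D})\Lambda + \mathbf{1}_m)$ из~(\ref{g_tilde}) \emph{вещественна}.

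Теперь исследую $L(\boldsymbol{\theta}) = |\Omega|^{-1}\int_{\Omega}(\Lambda^* b(\boldsymbol{\theta})^* \widetilde{g} + \widetilde{g}^* b(\boldsymbol{\theta})\Lambda)\,d\mathbf{x}$. Так как $\Lambda$ чисто мнима (а значит, и $\Lambda^*$ чисто мнима), а $b(\boldsymbol{\theta})$ и $\widetilde{g}$ вещественны, то, полагая $A = |\Omega|^{-1}\int_{\Omega}\Lambda^* b(\boldsymbol{\theta})^*\widetilde{g}\,d\mathbf{x}$, получаю $L = A + A^*$ с чисто мнимой матрицей $A$. Тогда $A^* = -A^T$ и $L = A - A^T$, то есть $L$~--- чисто мнимая кососимметричная матрица, $L^T = -L$. Для такой матрицы и вещественного вектора $\boldsymbol{\zeta}_l$ квадратичная форма обращается в нуль: $\langle L\boldsymbol{\zeta}_l, \boldsymbol{\zeta}_l\rangle_{\mathbb{C}^m} = \sum_{j,k} L_{jk}(\zeta_l)_k (\zeta_l)_j = 0$ в силу кососимметричности $L_{jk} = -L_{kj}$. Следовательно, $\widehat{\mu}_l(\boldsymbol{\theta}) = 0$ при всех $l$, что и означает $\widehat{N}_0(\boldsymbol{\theta}) = 0$.

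Главная трудность сосредоточена в~шаге с чистой мнимостью $\Lambda$: нужно аккуратно обосновать корректность применения комплексного сопряжения к уравнению на ячейке и сослаться на единственность его решения в классе периодических функций с~нулевым средним, поскольку именно это свойство превращает явную формулу для $L(\boldsymbol{\theta})$ в кососимметричную (чисто мнимую) матрицу. Всё остальное сводится к~элементарному наблюдению, что кососимметричная форма на~вещественных векторах равна нулю, и к~использованию вещественности зародышей, заложенной в~условии предложения. Отмечу, что тот же результат можно получить и в~абстрактных терминах, проверив с~помощью антиунитарного сопряжения $J$ соотношения $J X_0 J = -X_0$, $J X_1 J = X_1$ (при вещественном $h = g^{1/2}$) и выведя отсюда, что $Z\widehat{\omega}_l$ чисто мнимо, а $R\widehat{\omega}_l$ вещественно, так что $\widehat{\mu}_l = 2\,\Re(X_1 Z\widehat{\omega}_l, R\widehat{\omega}_l)_{\mathfrak{H}_*} = 0$; однако прямое вычисление через $L(\boldsymbol{\theta})$ представляется более коротким.
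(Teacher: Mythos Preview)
Your proof is correct. The paper itself does not give a proof of this proposition --- it is stated with attribution to~\cite{BSu2005-2} and used as a black box --- so there is nothing in the text to compare against. Your argument via the explicit formula~(\ref{hatN(theta)}) is clean: the key observation that $\Lambda$ is purely imaginary (by conjugating the cell problem and invoking uniqueness) forces $\widetilde{g}$ to be real and makes $L(\boldsymbol{\theta})$ a purely imaginary skew-symmetric matrix, whose quadratic form vanishes on real vectors. One cosmetic point: the factor $|\Omega|$ in the expression for $\widehat{\mu}_l$ depends on whether you normalize the constant $\widehat{\omega}_l$ in $L_2(\Omega)$ or not, but this is irrelevant for the conclusion.
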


В рассматриваемом \textquotedblleft вещественном\textquotedblright \ случае росток $\widehat{S} (\boldsymbol{\theta})$ представляет собой симметричную вещественную матрицу. Ясно, что в случае простого собственного значения $\widehat{\gamma}_j (\boldsymbol{\theta})$ ростка зародыш $\widehat{\omega}_j (\boldsymbol{\theta})$ определяется однозначно с точностью до фазового множителя, и его всегда можно выбрать вещественным. Мы получаем следующее следствие.

\begin{corollary}
	\label{real_S_spec_simple_coroll}
	Пусть $b(\boldsymbol{\theta})$ и $g (\mathbf{x})$~--- матрицы с вещественными элементами и пусть спектр ростка $\widehat{S} (\boldsymbol{\theta})$ простой. Тогда $\widehat{N}_0 (\boldsymbol{\theta}) = 0$.
\end{corollary}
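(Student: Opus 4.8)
План доказательства состоит в том, чтобы свести утверждение к предыдущему предложению, проверив, что при простом спектре ростка зародыши $\widehat{\omega}_l(\boldsymbol{\theta})$ автоматически можно выбрать вещественными. Прежде всего я воспользуюсь уже отмеченным перед формулировкой фактом: в «вещественном» случае (когда $b(\boldsymbol{\theta})$ и $g(\mathbf{x})$ вещественны) решение $\Lambda(\mathbf{x})$ задачи~(\ref{equation_for_Lambda}) вещественно в силу единственности, поэтому матрица $\widetilde{g}(\mathbf{x})$ из~(\ref{g_tilde}) и эффективная матрица $g^0$ из~(\ref{g0}) также вещественны, а значит росток $\widehat{S}(\boldsymbol{\theta}) = b(\boldsymbol{\theta})^* g^0 b(\boldsymbol{\theta})$ есть вещественная симметричная $(n\times n)$-матрица.

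Основной (и, по существу, единственный содержательный) шаг~--- стандартное замечание линейной алгебры: у вещественной симметричной матрицы каждое собственное значение отвечает вещественному собственному вектору, причём в случае \emph{простого} собственного значения $\widehat{\gamma}_j(\boldsymbol{\theta})$ соответствующее (над $\mathbb{C}$) собственное подпространство одномерно и порождается вещественным вектором. Следовательно, любой нормированный собственный вектор отличается от вещественного лишь фазовым множителем, и, подбирая фазу, зародыши $\widehat{\omega}_l(\boldsymbol{\theta})$, $l = 1, \ldots, n$, из разложений~(\ref{hatA_eigenvectors_series}) можно выбрать вещественными.

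Наконец, остаётся применить предыдущее предложение: при вещественных $b(\boldsymbol{\theta})$, $g(\mathbf{x})$ и вещественно выбранных зародышах выполнено $\widehat{\mu}_l(\boldsymbol{\theta}) = 0$, $l = 1, \ldots, n$, то есть $\widehat{N}_0(\boldsymbol{\theta}) = 0$. Я не ожидаю здесь никаких трудностей: всё содержательное уже заключено в предыдущем предложении, а роль предположения о простоте спектра сводится исключительно к тому, чтобы гарантировать возможность вещественного выбора зародышей, требуемого в этом предложении.
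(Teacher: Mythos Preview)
Your proposal is correct and follows essentially the same approach as the paper: the paragraph immediately preceding the corollary already observes that in the ``real'' case $\widehat{S}(\boldsymbol{\theta})$ is a real symmetric matrix, so that a simple eigenvalue forces the corresponding zародыш to be real up to a phase, and then the previous proposition applies. Your write-up merely spells out in slightly more detail why $g^0$ (and hence $\widehat{S}(\boldsymbol{\theta})$) is real, which is fine.
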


Однако, как показывают примеры~\cite[пример~8.7]{Su2017}, \cite[п.~14.3]{DSu2018}, в \textquotedblleft вещественном\textquotedblright \ случае не всегда возможно выбрать векторы $\widehat{\omega}_l (\boldsymbol{\theta})$ вещественными. Может случиться, что $\widehat{N}_0 (\boldsymbol{\theta}) \ne 0$ в некоторых точках $\boldsymbol{\theta}$.

\subsection{Операторы $\widehat{Z}_2(\boldsymbol{\theta})$, $\widehat{R}_2(\boldsymbol{\theta})$, $\widehat{N}_1^0(\boldsymbol{\theta})$}
Опишем операторы $Z_2$, $R_2$, $N_1^0$ (в абстрактных терминах определённые в пп.~\ref{abstr_Z2_R2_section} и \ref{abstr_nu_section}) для семейства $\widehat{A} (t, \boldsymbol{\theta})$. Пусть $\Lambda^{(2)}_l (\mathbf{x})$~--- $\Gamma$-периодическое решение задачи
\begin{equation*}
b(\mathbf{D})^* g(\mathbf{x}) (b(\mathbf{D}) \Lambda^{(2)}_l (\mathbf{x}) + b_l \Lambda(\mathbf{x})) = b_l^* (g^0 - \widetilde{g} (\mathbf{x})), \qquad \int_{\Omega} \Lambda^{(2)}_l (\mathbf{x}) \, d \mathbf{x} = 0.
\end{equation*}
Положим $
\Lambda^{(2)} (\mathbf{x}; \boldsymbol{\theta}) \coloneqq \sum_{l=1}^{d} \Lambda^{(2)}_l (\mathbf{x}) \theta_l$.
Как проверено в~\cite[п.~6.3]{VSu2012}
\begin{gather*}
\widehat{Z}_2(\boldsymbol{\theta}) = \Lambda^{(2)} (\mathbf{x}; \boldsymbol{\theta}) b(\boldsymbol{\theta}) \widehat{P}, \qquad
\widehat{R}_2(\boldsymbol{\theta}) = h(\mathbf{x}) (b(\mathbf{D}) \Lambda^{(2)} (\mathbf{x}; \boldsymbol{\theta}) + b(\boldsymbol{\theta}) \Lambda(\mathbf{x})) b(\boldsymbol{\theta}).
\end{gather*}
Наконец, в~\cite[п.~6.4]{VSu2012} было получено представление
\begin{align*}
&\widehat{N}_1^0(\boldsymbol{\theta}) = b(\boldsymbol{\theta})^* L_2(\boldsymbol{\theta}) b(\boldsymbol{\theta}) \widehat{P}, 
\\
&\begin{multlined}[c][15cm]
L_2(\boldsymbol{\theta}) \coloneqq |\Omega|^{-1} \int_{\Omega} \left(\Lambda^{(2)} (\mathbf{x}; \boldsymbol{\theta})^* b(\boldsymbol{\theta})^* \widetilde{g}(\mathbf{x}) + \widetilde{g}(\mathbf{x})^* b(\boldsymbol{\theta}) \Lambda^{(2)} (\mathbf{x}; \boldsymbol{\theta}) \right) \, d \mathbf{x} + \\ +
|\Omega|^{-1} \int_{\Omega} \left(b(\mathbf{D}) \Lambda^{(2)} (\mathbf{x}; \boldsymbol{\theta}) + b(\boldsymbol{\theta}) \Lambda(\mathbf{x}) \right)^* g(\mathbf{x}) \left(b(\mathbf{D}) \Lambda^{(2)} (\mathbf{x}; \boldsymbol{\theta}) + b(\boldsymbol{\theta}) \Lambda(\mathbf{x})\right) \, d \mathbf{x}.
\end{multlined}
\end{align*}

\subsection{Кратности собственных значений ростка}
\label{eigenval_multipl_section}
В данном пункте считаем, что $n \ge 2$. Перейдём к обозначениям, принятым в~п.~\ref{abstr_cluster_section}, следя за кратностями собственных значений спектрального ростка $\widehat{S} (\boldsymbol{\theta})$. Вообще говоря, количество $p(\boldsymbol{\theta})$ различных собственных значений $\widehat{\gamma}^{\circ}_1 (\boldsymbol{\theta}), \ldots, \widehat{\gamma}^{\circ}_{p(\boldsymbol{\theta})} (\boldsymbol{\theta})$ спектрального ростка $\widehat{S}(\boldsymbol{\theta})$ и их кратности $k_1 (\boldsymbol{\theta}), \ldots, k_{p(\boldsymbol{\theta})} (\boldsymbol{\theta})$ зависят от параметра $\boldsymbol{\theta} \in \mathbb{S}^{d-1}$. При каждом фиксированном $\boldsymbol{\theta}$ через $\widehat{P}_j (\boldsymbol{\theta})$ обозначим ортопроектор в $L_2 (\Omega; \mathbb{C}^n)$ на собственное подпространство ростка $\widehat{S}(\boldsymbol{\theta})$, отвечающее собственному значению $\widehat{\gamma}_j^{\circ} (\boldsymbol{\theta})$. В силу~(\ref{abstr_N0_N*_invar_repr}) справедливы инвариантные (не зависящие от выбора базиса) представления для операторов $\widehat{N}_0 (\boldsymbol{\theta})$  и $\widehat{N}_* (\boldsymbol{\theta})$:
\begin{equation}
\label{N0_N*_invar_repr}
\widehat{N}_0 (\boldsymbol{\theta}) = \sum_{j=1}^{p(\boldsymbol{\theta})} \widehat{P}_j (\boldsymbol{\theta}) \widehat{N} (\boldsymbol{\theta}) \widehat{P}_j (\boldsymbol{\theta}), \qquad \widehat{N}_* (\boldsymbol{\theta}) = \sum_{\substack{1 \le l,j \le p(\boldsymbol{\theta})\\ j \ne l}} \widehat{P}_j (\boldsymbol{\theta}) \widehat{N} (\boldsymbol{\theta}) \widehat{P}_l (\boldsymbol{\theta}).
\end{equation}

\subsection{Коэффициенты $\widehat{\nu}_l (\boldsymbol{\theta})$, $l=1, \ldots,n$}
Количество $p'(q, \boldsymbol{\theta})$ различных собственных значений $\widehat{\mu}^{\circ}_{1,q} (\boldsymbol{\theta}), \ldots, \widehat{\mu}^{\circ}_{p'(q, \boldsymbol{\theta}),q} (\boldsymbol{\theta})$ оператора $\widehat{P}_q(\boldsymbol{\theta}) \widehat{N}(\boldsymbol{\theta}) |_{\widehat{\mathfrak{N}}_q(\boldsymbol{\theta})}$ и их кратности $k_{1,q} (\boldsymbol{\theta}), \ldots, k_{p'(\boldsymbol{\theta}),q} (\boldsymbol{\theta})$ также зависят от параметра $\boldsymbol{\theta} \in \mathbb{S}^{d-1}$. При каждом фиксированном $\boldsymbol{\theta}$ через $\widehat{P}_{q',q} (\boldsymbol{\theta})$ обозначим ортопроектор в $L_2 (\Omega; \mathbb{C}^n)$ на собственное подпространство $\widehat{\mathfrak{N}}_{q',q}(\boldsymbol{\theta})$, отвечающее собственному значению $\widehat{\mu}^{\circ}_{q',q} (\boldsymbol{\theta})$. 

Коэффициенты $\widehat{\nu}_l (\boldsymbol{\theta})$, $l = i'(q',q,\boldsymbol{\theta}), \ldots, i'(q',q,\boldsymbol{\theta})+k_{q',q}(\boldsymbol{\theta})-1$, где $i'(q',q,\boldsymbol{\theta}) = i(q,\boldsymbol{\theta})+k_{1,q}(\boldsymbol{\theta})+\ldots+k_{q'-1,q}(\boldsymbol{\theta})$, $i(q,\boldsymbol{\theta}) = k_1(\boldsymbol{\theta})+\ldots+k_{q-1}(\boldsymbol{\theta})+1$, являются собственными числами следующей задачи 
\begin{equation*}
\widehat{\mathcal{N}}^{(q',q)}(\boldsymbol{\theta}) \widehat{\omega}_l(\boldsymbol{\theta}) = \widehat{\nu}_l(\boldsymbol{\theta}) \widehat{\omega}_l(\boldsymbol{\theta}), \qquad l = i'(q',q,\boldsymbol{\theta}), \ldots, i'(q',q,\boldsymbol{\theta})+k_{q',q}(\boldsymbol{\theta})-1,
\end{equation*}
где 
\begin{multline*}
\widehat{\mathcal{N}}^{(q',q)}(\boldsymbol{\theta}) \coloneqq \widehat{P}_{q',q}(\boldsymbol{\theta}) \left. \left( \widehat{N}_1^0(\boldsymbol{\theta}) - \frac{1}{2} \widehat{Z}(\boldsymbol{\theta})^* \widehat{Z}(\boldsymbol{\theta}) \widehat{S}(\boldsymbol{\theta}) \widehat{P} - \frac{1}{2} \widehat{S}(\boldsymbol{\theta}) \widehat{P} \widehat{Z}(\boldsymbol{\theta})^* \widehat{Z}(\boldsymbol{\theta}) \right)\right|_{\widehat{\mathfrak{N}}_{q',q}} + \\ +
\sum_{\substack{j\in\{1,\ldots,p(\boldsymbol{\theta})\} \\ j \ne q}} \bigl(\gamma^{\circ}_q(\boldsymbol{\theta}) - \gamma^{\circ}_{j}(\boldsymbol{\theta})\bigr)^{-1} \widehat{P}_{q',q}(\boldsymbol{\theta}) \widehat{N}(\boldsymbol{\theta}) \widehat{P}_{j}(\boldsymbol{\theta}) \widehat{N}(\boldsymbol{\theta})|_{\widehat{\mathfrak{N}}_{q',q}(\boldsymbol{\theta})}.
\end{multline*}

Отметим, что в случае, когда $\widehat{N}_0(\boldsymbol{\theta}) = 0$, имеет место $\widehat{\mathfrak{N}}_{1,q}(\boldsymbol{\theta}) = \widehat{\mathfrak{N}}_{q}(\boldsymbol{\theta})$, $q=1, \ldots, p(\boldsymbol{\theta})$. Тогда вместо $\widehat{\mathcal{N}}^{(1,q)}(\boldsymbol{\theta})$ мы будем писать $\widehat{\mathcal{N}}^{(q)}(\boldsymbol{\theta})$. 

\subsection{Пример}
Рассмотрим скалярный эллиптический оператор 
\begin{equation*}
\widehat{\mathcal{A}} = - \operatorname{div} g (\mathbf{x}) \nabla = \mathbf{D}^* g (\mathbf{x}) \mathbf{D},
\end{equation*}
действующий в $L_2 (\mathbb{R}^d)$, $d \ge 1$, который является частным случаем оператора~(\ref{hatA}). Сейчас $n=1$, $m=d$, $b(\mathbf{D}) = \mathbf{D}$.

Эффективная матрица $g^0$ определяется стандартным образом. Пусть $\psi_j \in \widetilde{H}^1(\Omega)$~--- (слабое) $\Gamma$-периодическое решение задачи
\begin{equation}
\label{psi_equation}
\operatorname{div} g(\mathbf{x}) (\nabla \psi_j (\mathbf{x}) + \mathbf{e}_j) = 0, \qquad \int_{\Omega} \psi_j (\mathbf{x}) \, d\mathbf{x} = 0.
\end{equation}
Здесь $\mathbf{e}_1, \ldots, \mathbf{e}_d$~--- стандартные орты в $\mathbb{R}^d$. Матрица $\widetilde{g}(\mathbf{x})$~--- это ($d \times d$)-матрица со столбцами $\widetilde{\mathbf{g}}_j (\mathbf{x}) \coloneqq g(\mathbf{x}) (\nabla \psi_j (\mathbf{x}) + \mathbf{e}_j)$, $j=1,\dots,d$.  Тогда $g^0 = | \Omega |^{-1} \int_{\Omega} \widetilde{g}(\mathbf{x}) \, d \mathbf{x}$.

Если $g(\mathbf{x})$~--- эрмитова матрица с вещественными элементами, то согласно предложению~\ref{N=0_proposit}(\ref{N=0_proposit_p1}) выполнено $\widehat{N} (\boldsymbol{\theta}) = 0$ при всех $\boldsymbol{\theta} \in \mathbb{S}^{d-1}$. Если  же $g(\mathbf{x})$~--- эрмитова матрица с комплексными элементами, то в общей ситуации оператор $\widehat{N} (\boldsymbol{\theta})$ отличен от нуля. Сейчас $n=1$, а потому оператор $\widehat{N} (\boldsymbol{\theta}) = \widehat{N}_0 (\boldsymbol{\theta})$ есть оператор умножения на $\widehat{\mu}(\boldsymbol{\theta})$, где $\widehat{\mu}(\boldsymbol{\theta})$~--- коэффициент в разложении для первого собственного значения
\begin{equation*}
\widehat{\lambda}(t, \boldsymbol{\theta}) = \widehat{\gamma} (\boldsymbol{\theta}) t^2 + \widehat{\mu} (\boldsymbol{\theta}) t^3 + \widehat{\nu} (\boldsymbol{\theta}) t^4 + \ldots
\end{equation*}
оператора $\widehat{\mathcal{A}} (\mathbf{k})$. Вычисление~(см.~\cite[п.~10.3]{BSu2005-2}) показывает, что
\begin{align*}
&\widehat{N} (\boldsymbol{\theta}) = \widehat{\mu} (\boldsymbol{\theta}) = -i \sum_{j,l,r=1}^{d} (a_{jlr} - a_{ljr}^*) \theta_j \theta_l \theta_r, \\
&a_{jlr} = |\Omega|^{-1} \int_{\Omega} \psi_j (\mathbf{x})^* \left\langle g(\mathbf{x}) (\nabla \psi_l (\mathbf{x}) + \mathbf{e}_l), \mathbf{e}_r \right\rangle \, d \mathbf{x}, \qquad j, l, r = 1, \ldots, d. 
\end{align*}
Следующий пример заимствован из~\cite[п.~10.4]{BSu2005-2}.
\begin{example}[\cite{BSu2005-2}]
	Пусть $d=2$, $\Gamma = (2 \pi \mathbb{Z})^2$ и матрица $g(\mathbf{x})$ задана соотношением
	\begin{equation*}
	g(\mathbf{x}) = \begin{pmatrix}
	1 & i \beta'(x_1) \\
	- i \beta' (x_1) & 1
	\end{pmatrix},
	\end{equation*}
	где $\beta(x_1)$~--- гладкая вещественная $(2 \pi)$-периодическая функция такая, что $1 - (\beta'(x_1))^2 > 0$ и $\int_{0}^{2 \pi} \beta(x_1)\, d x_1 = 0$. В этом случае $\widehat{N} (\boldsymbol{\theta}) = - \alpha \pi^{-1} \theta_2^3$, где $\alpha = \int_{0}^{2 \pi} \beta (x_1) (\beta' (x_1))^2 dx_1$. Легко указать конкретный пример, когда $\alpha \ne 0$: достаточно положить $\beta (x_1) = c (\sin x_1 + \cos 2x_1)$ при $0 < c < 1/3$; тогда $\alpha = - (3 \pi/2) c^3 \ne 0$. В данном примере $\widehat{N} (\boldsymbol{\theta}) = \widehat{\mu} (\boldsymbol{\theta}) \ne 0$ при всех $\boldsymbol{\theta} \in \mathbb{S}^1$ за исключением точек $(\pm 1 ,0)$.
\end{example}

Далее, пусть $\phi_{jl} (\mathbf{x})$~--- $\Gamma$-периодическое решение задачи
\begin{equation}
\label{phi_equation}
-\operatorname{div} g(\mathbf{x}) (\nabla \phi_{jl} (\mathbf{x}) - \psi_j(\mathbf{x}) \mathbf{e}_l) = g^0_{lj} - \widetilde{g}_{lj} (\mathbf{x}), \qquad \int_{\Omega} \phi_{jl} (\mathbf{x}) \, d\mathbf{x} = 0.
\end{equation}
Оператор $\widehat{\mathcal{N}}^{(1,1)} (\boldsymbol{\theta})$ есть оператор умножения на $\widehat{\nu}(\boldsymbol{\theta})$. Вычисление~(см.~\cite[п.~14.5]{VSu2012}) показывает, что
\begin{align}
\label{hat_nu(k)}
&\widehat{\mathcal{N}}^{(1,1)} (\boldsymbol{\theta}) = \widehat{\nu}(\boldsymbol{\theta}) = \sum_{p,q,l,r=1}^{d} (\alpha_{pqlr} - (\overline{\psi_p^* \psi_q}) g^0_{lr}) \theta_p \theta_q \theta_l \theta_r,\\
\notag
&\begin{multlined}[c][15cm]
\alpha_{pqlr} = |\Omega|^{-1} \int_{\Omega} (\widetilde{g}_{lp} (\mathbf{x}) \phi_{qr}(\mathbf{x}) + \widetilde{g}_{rq} (\mathbf{x}) \phi_{pl}(\mathbf{x})) \, d \mathbf{x}
+\\ +
|\Omega|^{-1} \int_{\Omega} \left\langle g(\mathbf{x}) (\nabla \phi_{qr} (\mathbf{x}) - \psi_q(\mathbf{x}) \mathbf{e}_r), \nabla \phi_{pl} (\mathbf{x}) - \psi_p(\mathbf{x}) \mathbf{e}_l \right\rangle \, d \mathbf{x},\\
p,q,l,r = 1, \ldots, d.
\end{multlined}
\end{align}
\begin{lemma}
	Пусть $d=1$. Если $g \ne \const$, то $\widehat{\nu}(-1) = \widehat{\nu}(1) \ne 0$.
\end{lemma}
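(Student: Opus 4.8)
The plan is to exploit the special structure of the one-dimensional case, in which the corrector field turns out to be constant, so that almost every term in~(\ref{hat_nu(k)}) collapses. First, since $d=1$ forces $\boldsymbol{\theta} \in \mathbb{S}^0 = \{\pm 1\}$, every index $p,q,l,r$ equals $1$ and $\theta_1^4 = 1$; hence~(\ref{hat_nu(k)}) degenerates to $\widehat{\nu}(\pm 1) = \alpha_{1111} - \overline{|\psi_1|^2}\, g^0$, which immediately yields the equality $\widehat{\nu}(-1) = \widehat{\nu}(1)$. It remains to show this common value does not vanish. I would begin by solving the cell problem~(\ref{psi_equation}): integrating $\tfrac{d}{dx}\bigl(g(\psi_1' + 1)\bigr)=0$ once gives $g(x)(\psi_1'(x)+1) = \underline{g}$, where the constant is pinned down by the periodicity and zero-mean conditions to equal the harmonic mean $\underline{g}$. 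Consequently $\psi_1'(x) = \underline{g}/g(x) - 1$, and the corrector matrix is constant: $\widetilde{g}(x) = g(x)(\psi_1'(x)+1) = \underline{g} = g^0$.

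The key simplification follows from this. Since $g^0 - \widetilde{g}(x) \equiv 0$, the right-hand side of the second cell problem~(\ref{phi_equation}) vanishes identically, so $\tfrac{d}{dx}\bigl(g(\phi_{11}' - \psi_1)\bigr)=0$, i.e. $g(x)(\phi_{11}'(x) - \psi_1(x))$ is a constant. The periodicity of $\phi_{11}$ together with $\int_\Omega \psi_1\,dx = 0$ forces that constant to be zero, so $\phi_{11}'(x) = \psi_1(x)$. Substituting into the formula for $\alpha_{1111}$, the first integral $|\Omega|^{-1}\int_\Omega 2\widetilde{g}_{11}\phi_{11}\,dx$ vanishes because $\widetilde{g}_{11} = \underline{g}$ is constant while $\phi_{11}$ has zero mean, and the second integral vanishes because its integrand carries the factor $\phi_{11}' - \psi_1 = 0$. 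Hence $\alpha_{1111}=0$.

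Therefore $\widehat{\nu}(\pm 1) = -\,\overline{|\psi_1|^2}\, g^0 = -\,\underline{g}\,|\Omega|^{-1}\int_\Omega |\psi_1|^2\,dx$. Since $g$ is a positive scalar, $\underline{g} > 0$; and $\overline{|\psi_1|^2} > 0$ unless $\psi_1 \equiv 0$, which, in view of $\psi_1' = \underline{g}/g - 1$, happens precisely when $g \equiv \underline{g}$ is constant. Because $g \ne \const$ by hypothesis, $\psi_1 \not\equiv 0$, and we conclude $\widehat{\nu}(\pm 1) < 0$, in particular nonzero. There is no genuine obstacle here: the whole argument rests on the observation, peculiar to $d=1$, that $\widetilde{g}$ is already constant, which simultaneously kills $\alpha_{1111}$ and the source term in the $\phi$-equation; the only point requiring care is tracking the zero-mean normalizations that make the auxiliary integration constants vanish.
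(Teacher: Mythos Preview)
Your proof is correct and follows exactly the same line as the paper's: solve the one-dimensional cell problem to get $\psi_1' = \underline{g}/g - 1$ and $\widetilde{g} \equiv \underline{g} = g^0$, deduce $\phi_{11}' = \psi_1$ from~(\ref{phi_equation}), conclude $\alpha_{1111}=0$, and finish with $\widehat{\nu}(\pm1) = -\overline{|\psi_1|^2}\,g^0 \ne 0$ because $g \ne \const$ forces $\psi_1 \not\equiv 0$. You have simply spelled out the verifications (why the integration constants vanish, why each piece of $\alpha_{1111}$ is zero) that the paper leaves implicit, and additionally noted the sign $\widehat{\nu}(\pm1)<0$.
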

\begin{proof}
	Задача~(\ref{psi_equation}) сейчас имеет вид $\frac{d}{dx} g(x) (\frac{d}{dx} \psi_1 (x) + 1) = 0$, $\overline{\psi_1}= 0$. Тогда $\frac{d}{dx} \psi_1 (x) = \underline{g} (g(x))^{-1} - 1$. Поскольку $g(x) \ne \const$, то $\underline{g} (g(x))^{-1} - 1 \not\equiv 0$ и поэтому $\psi_1 \not\equiv 0$. Далее, $\widetilde{g}(x) = \underline{g} = g^0$ и  уравнение~(\ref{phi_equation}) имеет вид $\frac{d}{dx} g(x) (\frac{d}{dx} \phi_{11} (x) - \psi_1(x)) = 0$, $\overline{\phi_{11}} = 0$. Тогда $\frac{d}{dx} \phi_{11} (x) - \psi_1(x) = 0$. Нетрудно убедиться в том, что $\alpha_{1111}$ в~(\ref{hat_nu(k)}) сейчас равно нулю: $\alpha_{1111} = 0$. Так как $\overline{\psi_1^2} g^0 \ne 0$, то $\widehat{\nu}(-1) = \widehat{\nu}(1) \ne 0$.
\end{proof}

\section{Аппроксимация сглаженного оператора $e^{-i \tau \varepsilon^{-2} \widehat{\mathcal{A}}(\mathbf{k})}$}
\subsection{Общий случай}

Рассмотрим оператор $\mathcal{H}_0 = -\Delta$ в $L_2 (\mathbb{R}^d; \mathbb{C}^n)$. При разложении в прямой интеграл оператору $\mathcal{H}_0$ отвечает семейство операторов $\mathcal{H}_0 (\mathbf{k})$, действующих в $L_2 (\Omega; \mathbb{C}^n)$. Оператор $\mathcal{H}_0 (\mathbf{k})$ задаётся дифференциальным выражением $| \mathbf{D} + \mathbf{k} |^2$ при периодических граничных условиях. Введём обозначение
\begin{equation}
\label{R(k,eps)}
\mathcal{R}(\mathbf{k}, \varepsilon) \coloneqq \varepsilon^2 (\mathcal{H}_0 (\mathbf{k}) + \varepsilon^2 I)^{-1}.
\end{equation}
Очевидно, 
\begin{equation}
\label{R_P}
\mathcal{R}(\mathbf{k}, \varepsilon)^{s/2}\widehat{P} = \varepsilon^s (t^2 + \varepsilon^2)^{-s/2} \widehat{P}, \qquad s > 0.
\end{equation}
Отметим, что при $ |\mathbf{k}| > \widehat{t}^{\,0}$ выполнено неравенство
\begin{equation}
\label{R_hatP_est}
\| \mathcal{R}(\mathbf{k}, \varepsilon)^{s/2}\widehat{P} \|_{L_2 (\Omega) \to L_2 (\Omega)} \le (\widehat{t}^{\,0})^{-s} \varepsilon^s, \qquad \varepsilon > 0, \; \mathbf{k} \in \widetilde{\Omega}, \; |\mathbf{k}| > \widehat{t}^{\,0}.
\end{equation}
Далее, используя разложение в ряд Фурье, получаем
\begin{equation}
\label{R(k,eps)(I-P)_est}
\| \mathcal{R}(\mathbf{k}, \varepsilon)^{s/2} (I - \widehat{P}) \|_{L_2(\Omega) \to L_2 (\Omega) }  = \sup_{0 \ne \mathbf{b} \in \widetilde{\Gamma}} \varepsilon^s (|\mathbf{b} + \mathbf{k}|^2 + \varepsilon^2)^{-s/2} \le r_0^{-s} \varepsilon^s, \qquad
\varepsilon > 0, \; \mathbf{k} \in \widetilde{\Omega}.
\end{equation}

Обозначим 
\begin{equation}
\label{Jhat(k,eps)}
\widehat{J} (\mathbf{k}, \varepsilon; \tau) \coloneqq e^{-i \tau \varepsilon^{-2} \widehat{\mathcal{A}} (\mathbf{k})} - e^{-i \tau \varepsilon^{-2} \widehat{\mathcal{A}}^0 (\mathbf{k})}.
\end{equation}

Мы применим к оператору $\widehat{A}(t; \boldsymbol{\theta}) = \widehat{\mathcal{A}}(\mathbf{k})$ теоремы из~\S\ref{abstr_exp_section}. При этом мы можем отследить зависимость постоянных в оценках от исходных данных. Отметим, что $\widehat{c}_*$, $\widehat{\delta}$ и $\widehat{t}^ 0$ не зависят от $\boldsymbol{\theta}$ (см.~(\ref{c_*}), (\ref{delta_fixation}), (\ref{t0_fixation}) при $f = \mathbf{1}_n$). Согласно~(\ref{X_1_estimate}) (при $f = \mathbf{1}_n$) норму $\| \widehat{X}_1 (\boldsymbol{\theta}) \|$ можно заменить на $\alpha_1^{1/2} \| g \|_{L_{\infty}}^{1/2}$. Поэтому постоянные в теоремах~\ref{abstr_exp_general_thrm} и~\ref{abstr_exp_enchcd_thrm_1} (применённых к оператору $\widehat{\mathcal{A}}(\mathbf{k})$) не будут зависеть от $\boldsymbol{\theta}$. Они будут зависеть только от следующих величин: $\alpha_0$, $\alpha_1$, $\|g\|_{L_\infty}$, $\|g^{-1}\|_{L_\infty}$ и $r_0$.

Применяя теорему~\ref{abstr_exp_general_thrm} с учётом~(\ref{hatS_P=hatA^0_P}), (\ref{R_P})--(\ref{R(k,eps)(I-P)_est}), приходим к следующему утверждению, ранее доказанному в~\cite[теорема~7.1]{BSu2008}.
\begin{thrm}[\cite{BSu2008}]
	\label{hatA(k)_exp_general_thrm}
	При $\tau \in \mathbb{R}$, $\varepsilon > 0$ и $\mathbf{k} \in \widetilde{\Omega}$ выполнена оценка
	\begin{equation*}
	\| \widehat{J} (\mathbf{k}, \varepsilon; \tau) \mathcal{R}(\mathbf{k}, \varepsilon)^{3/2}\|_{L_2(\Omega) \to L_2 (\Omega) }  \le \widehat{\mathcal{C}}_1 (1 + |\tau|) \varepsilon,
	\end{equation*}
	где константа $\widehat{\mathcal{C}}_1$ зависит только от $\alpha_0$, $\alpha_1$, $\|g\|_{L_\infty}$, $\|g^{-1}\|_{L_\infty}$ и $r_0$.
\end{thrm}

\subsection{Случай, когда $\widehat{N}(\boldsymbol{\theta}) = 0$}
Применим теперь теорему~\ref{abstr_exp_enchcd_thrm_1}, предполагая, что $\widehat{N}(\boldsymbol{\theta}) = 0$ при всех $\boldsymbol{\theta} \in \mathbb{S}^{d-1}$. С  учётом~(\ref{hatS_P=hatA^0_P}), (\ref{R_P})--(\ref{R(k,eps)(I-P)_est}) это влечёт следующий результат.
\begin{thrm}
	\label{hatA(k)_exp_enchcd_thrm_1}
	Пусть оператор $\widehat{N}(\boldsymbol{\theta})$ определён в~\emph{(\ref{hatN(theta)})}. Пусть $\widehat{N}(\boldsymbol{\theta})~=~0$ при всех $\boldsymbol{\theta} \in \mathbb{S}^{d-1}$. Тогда при $\tau \in \mathbb{R}$, $\varepsilon > 0$ и $\mathbf{k} \in \widetilde{\Omega}$ выполнена оценка
	\begin{equation*}
	\| \widehat{J}(\mathbf{k}, \varepsilon; \tau ) \mathcal{R}(\mathbf{k}, \varepsilon)\|_{L_2(\Omega) \to L_2 (\Omega) }  \le \widehat{\mathcal{C}}_2 (1+ |\tau|^{1/2}) \varepsilon,
	\end{equation*}
	где константа $\widehat{\mathcal{C}}_2$ зависит только от $\alpha_0$, $\alpha_1$, $\|g\|_{L_\infty}$, $\|g^{-1}\|_{L_\infty}$ и $r_0$.
\end{thrm}

\subsection{Случай, когда $\widehat{N}_0(\boldsymbol{\theta}) = 0$}
\label{ench_approx2_section}
Теперь мы отказываемся от предположения теоремы~\ref{hatA(k)_exp_enchcd_thrm_1}, но взамен предположим, что $\widehat{N}_0(\boldsymbol{\theta}) = 0$ при всех $\boldsymbol{\theta}$. Нам хотелось бы применить теорему~\ref{abstr_exp_enchcd_thrm_2}. Однако, возникает дополнительное осложнение: кратность спектра ростка $\widehat{S} (\boldsymbol{\theta})$ может меняться в некоторых точках $\boldsymbol{\theta}$. При приближении к таким точкам расстояние между какой-то парой различных собственных значений стремится к нулю и мы не можем выбрать величины $\widehat{c}^{\circ}_{jl}$, $\widehat{t}^{\,00}_{jl}$ не зависящими от $\boldsymbol{\theta}$. Поэтому мы вынуждены накладывать дополнительные условия. Заботиться надо только о тех собственных значениях, для которых соответствующее слагаемое во второй формуле~(\ref{N0_N*_invar_repr}) отлично от нуля. При формулировке дополнительного условия удобнее пользоваться исходной нумерацией собственных значений $\widehat{\gamma}_1 (\boldsymbol{\theta}), \ldots , \widehat{\gamma}_n (\boldsymbol{\theta})$ ростка $\widehat{S} (\boldsymbol{\theta})$ (каждое собственное значение повторяется столько раз, какова его кратность), условившись нумеровать их в порядке неубывания: $\widehat{\gamma}_1 (\boldsymbol{\theta}) \le \widehat{\gamma}_2 (\boldsymbol{\theta}) \le \ldots \le \widehat{\gamma}_n (\boldsymbol{\theta})$. Через $\widehat{P}^{(k)} (\boldsymbol{\theta})$ обозначим ортопроектор пространства $L_2 (\Omega; \mathbb{C}^n)$ на собственное подпространство оператора $\widehat{S} (\boldsymbol{\theta})$, отвечающее собственному значению $\widehat{\gamma}_k (\boldsymbol{\theta})$. Ясно, что при каждом $\boldsymbol{\theta}$ оператор $\widehat{P}^{(k)} (\boldsymbol{\theta})$ совпадает с одним из проекторов $\widehat{P}_j (\boldsymbol{\theta})$, введённых в п.~\ref{eigenval_multipl_section} (но номер $j$ может зависеть от $\boldsymbol{\theta}$). 
\begin{condition}
	\label{cond1}
	\begin{enumerate*}[label=\emph{\arabic*$^{\circ}.$}, ref=\arabic*$^{\circ}$]
		\item $\widehat{N}_0(\boldsymbol{\theta})=0$ при всех $\boldsymbol{\theta} \in \mathbb{S}^{d-1}$.
		\item \label{cond1_it2} Для каждой пары индексов $(k,r)$, $1 \le k,r \le n$, $k \ne r$, такой, что $\widehat{\gamma}_k (\boldsymbol{\theta}_0) = \widehat{\gamma}_r (\boldsymbol{\theta}_0) $ при некотором $\boldsymbol{\theta}_0 \in \mathbb{S}^{d-1}$, выполнено $\widehat{P}^{(k)} (\boldsymbol{\theta}) \widehat{N} (\boldsymbol{\theta}) \widehat{P}^{(r)} (\boldsymbol{\theta}) = 0$ при всех $\boldsymbol{\theta} \in \mathbb{S}^{d-1}$.    
	\end{enumerate*}
\end{condition}
 
Условие~\ref{cond1_it2} может быть переформулировано: мы требуем, чтобы для ненулевых (тождественно) \textquotedblleft блоков\textquotedblright \ $\widehat{P}^{(k)} (\boldsymbol{\theta}) \widehat{N} (\boldsymbol{\theta}) \widehat{P}^{(r)} (\boldsymbol{\theta})$ оператора $\widehat{N} (\boldsymbol{\theta})$ соответствующие ветви собственных значений $\widehat{\gamma}_k (\boldsymbol{\theta})$ и  $\widehat{\gamma}_r (\boldsymbol{\theta})$ не пересекались.

Разумеется, выполнение условия~\ref{cond1} гарантируется следующим более сильным условием.
\begin{condition}
	\label{cond2}
	\begin{enumerate*}[label=\emph{\arabic*$^{\circ}.$}, ref=\arabic*$^{\circ}$]
		\item $\widehat{N}_0(\boldsymbol{\theta})=0$ при всех $\boldsymbol{\theta} \in \mathbb{S}^{d-1}$.
		\item \label{cond2_it2} Количество $p$ различных собственных значений спектрального ростка $\widehat{S}(\boldsymbol{\theta})$ не зависит от $\boldsymbol{\theta} \in \mathbb{S}^{d-1}$.        
	\end{enumerate*}
\end{condition}

При условии~\ref{cond2} обозначим различные собственные значения ростка, занумерованные в порядке возрастания, через $\widehat{\gamma}^{\circ}_1(\boldsymbol{\theta}), \ldots, \widehat{\gamma}^{\circ}_p(\boldsymbol{\theta})$. Тогда их кратности $k_1, \ldots, k_p$ не зависят от $\boldsymbol{\theta} \in \mathbb{S}^{d-1}$.  

\begin{remark}
	\begin{enumerate*}[label=\emph{\arabic*$^{\circ}.$}, ref=\arabic*$^{\circ}$]
		\item Предположение пункта~\emph{\ref{cond2_it2}} условия~\emph{\ref{cond2}} заведомо выполнено, если спектр ростка $\widehat{S}(\boldsymbol{\theta})$ простой при всех $\boldsymbol{\theta} \in \mathbb{S}^{d-1}$.
		\item Из следствия~\emph{\ref{real_S_spec_simple_coroll}} вытекает, что условие~\emph{\ref{cond2}} выполнено, если $b (\boldsymbol{\theta})$ и $g (\mathbf{x})$~--- матрицы с вещественными элементами и спектр ростка $\widehat{S}(\boldsymbol{\theta})$ простой при всех $\boldsymbol{\theta} \in \mathbb{S}^{d-1}$.
	\end{enumerate*}
\end{remark}

Итак, предполагаем выполненным условие~\ref{cond1}. Нас интересуют только пары индексов из множества 
\begin{equation*}
\widehat{\mathcal{K}} \coloneqq \{ (k,r) \colon 1 \le k,r \le n, \;  k \ne r, \; \widehat{P}^{(k)} (\boldsymbol{\theta}) \widehat{N} (\boldsymbol{\theta}) \widehat{P}^{(r)} (\boldsymbol{\theta}) \not\equiv 0 \}.
\end{equation*}
Введём обозначение $
\widehat{c}^{\circ}_{kr} (\boldsymbol{\theta}) \coloneqq \min \{\widehat{c}_*, n^{-1} |\widehat{\gamma}_k (\boldsymbol{\theta}) - \widehat{\gamma}_r (\boldsymbol{\theta})| \}$, $(k,r) \in \widehat{\mathcal{K}}$. Поскольку оператор $\widehat{S} (\boldsymbol{\theta})$ непрерывно зависит от $\boldsymbol{\theta} \in \mathbb{S}^{d-1}$, то из теории возмущений дискретного спектра следует, что $\widehat{\gamma}_j (\boldsymbol{\theta})$~--- непрерывные функции на сфере $\mathbb{S}^{d-1}$. В силу условия~\ref{cond1}(\ref{cond1_it2}) при $(k,r) \in \widehat{\mathcal{K}}$ выполнено~$|\widehat{\gamma}_k (\boldsymbol{\theta}) - \widehat{\gamma}_r (\boldsymbol{\theta})| > 0$ при всех $\boldsymbol{\theta} \in \mathbb{S}^{d-1}$, а тогда $\widehat{c}^{\circ}_{kr} \coloneqq \min_{\boldsymbol{\theta} \in \mathbb{S}^{d-1}} \widehat{c}^{\circ}_{kr} (\boldsymbol{\theta}) > 0$ при $(k,r) \in \widehat{\mathcal{K}}$. Положим
\begin{equation}
\label{hatc^circ}
\widehat{c}^{\circ} \coloneqq \min_{(k,r) \in \widehat{\mathcal{K}}} \widehat{c}^{\circ}_{kr}.
\end{equation}

Ясно, что число~(\ref{hatc^circ})~--- это реализация величины~(\ref{abstr_c^circ}), выбранная не зависящей от $\boldsymbol{\theta}$.
Число $\widehat{t}^{\,00}$, подчинённое~(\ref{abstr_t00}), при условии~\ref{cond1} также можно выбрать не зависящим от $\boldsymbol{\theta} \in \mathbb{S}^{d-1}$. С учётом~(\ref{delta_fixation}) и~(\ref{X_1_estimate}) (при $f = \mathbf{1}_n$) положим
\begin{equation*}
\widehat{t}^{\,00} = (8 \beta_2)^{-1} r_0 \alpha_1^{-3/2} \alpha_0^{1/2} \| g\|_{L_{\infty}}^{-3/2} \| g^{-1}\|_{L_{\infty}}^{-1/2} \widehat{c}^{\circ},
\end{equation*}
где $\widehat{c}^{\circ}$ определено в~(\ref{hatc^circ}). (Условие $\widehat{t}^{\,00} \le \widehat{t}^{\,0}$ выполнено автоматически, поскольку $\widehat{c}^{\circ} \le \| \widehat{S} (\boldsymbol{\theta}) \| \le \alpha_1 \|g\|_{L_{\infty}}$.)

\begin{remark}
	В отличие от числа $\widehat{t}^{\,0}$ \emph{(}см.~\emph{(\ref{t0_fixation})} при $f = \mathbf{1}_n$\emph{)}, которое контролируется только через $r_0$, $\alpha_0$, $\alpha_1$, $\|g\|_{L_{\infty}}$ и $\|g^{-1}\|_{L_{\infty}}$, величина $\widehat{t}^{\,00}$ зависит от спектральной характеристики ростка~--- минимального расстояния между его различными собственными значениями $\widehat{\gamma}_k (\boldsymbol{\theta})$ и $ \widehat{\gamma}_r (\boldsymbol{\theta})$ \emph{(}где $(k,r)$ пробегает $\widehat{\mathcal{K}}$\emph{)}.
\end{remark}

Применяя теорему~\ref{abstr_exp_enchcd_thrm_2}, получаем следующий результат.
\begin{thrm}
	\label{hatA(k)_exp_enchcd_thrm_2}
	Пусть выполнено условие~\emph{\ref{cond1}} \emph{(}или более сильное условие~\emph{\ref{cond2}}\emph{)}. Тогда при $\tau \in \mathbb{R}$, $\varepsilon > 0$ и $\mathbf{k} \in \widetilde{\Omega}$ выполнена оценка
	\begin{equation*}
	\| \widehat{J}(\mathbf{k}, \varepsilon; \tau ) \mathcal{R}(\mathbf{k}, \varepsilon)\|_{L_2(\Omega) \to L_2 (\Omega)}  \le \widehat{\mathcal{C}}_3 (1+ |\tau|^{1/2}) \varepsilon,
	\end{equation*}
	где константа $\widehat{\mathcal{C}}_3$ зависит от $\alpha_0$, $\alpha_1$, $\|g\|_{L_\infty}$, $\|g^{-1}\|_{L_\infty}$, $r_0$, а также от $n$ и $\widehat{c}^{\circ}$.
\end{thrm}

\subsection{Подтверждение точности относительно сглаживания}
Применение теорем~\ref{abstr_exp_smooth_shrp_thrm_1}, \ref{abstr_exp_smooth_shrp_thrm_2} позволяет подтвердить точность теорем~\ref{hatA(k)_exp_general_thrm}, \ref{hatA(k)_exp_enchcd_thrm_1}, \ref{hatA(k)_exp_enchcd_thrm_2} в отношении сглаживания.
\begin{thrm}[\cite{Su2017}]
	\label{hatA(k)_exp_smooth_shrp_thrm_1}
	Пусть $\widehat{N}_0 (\boldsymbol{\theta}_0) \ne 0$ при некотором $\boldsymbol{\theta}_0 \in \mathbb{S}^{d-1}$. Пусть $\tau \ne 0$ и $0 \le s < 3$. Тогда не существует такой константы $\mathcal{C}(\tau) >0$, чтобы оценка
	\begin{equation*}
	\bigl\| \bigl( e^{-i \tau \varepsilon^{-2} \widehat{\mathcal{A}} (\mathbf{k})}  - e^{-i \tau \varepsilon^{-2} \widehat{\mathcal{A}}^0 (\mathbf{k})} \bigr) \mathcal{R} (\mathbf{k}, \varepsilon)^{s/2} \bigr\|_{L_2(\Omega) \to L_2 (\Omega)} \le \mathcal{C}(\tau) \varepsilon
	\end{equation*}
	выполнялась при почти всех $\mathbf{k} = t \boldsymbol{\theta} \in \widetilde{\Omega}$ и достаточно малых $\varepsilon > 0$.
\end{thrm}

\begin{thrm}
	\label{hatA(k)_exp_smooth_shrp_thrm_2}
	Пусть $\widehat{N}_0 (\boldsymbol{\theta}) = 0$ при всех $\boldsymbol{\theta} \in \mathbb{S}^{d-1}$ и пусть $\widehat{\mathcal{N}}^{(q)} (\boldsymbol{\theta}_0) \ne 0$  при некоторых $q \in \{1,\ldots,p(\boldsymbol{\theta}_0)\}$ и $\boldsymbol{\theta}_0 \in \mathbb{S}^{d-1}$. 
	Пусть $\tau \ne 0$ и $0 \le s < 2$. Тогда не существует такой константы $\mathcal{C}(\tau) >0$, чтобы оценка
	\begin{equation*}
	\bigl\| \bigl( e^{-i \tau \varepsilon^{-2} \widehat{\mathcal{A}} (\mathbf{k})}  - e^{-i \tau \varepsilon^{-2} \widehat{\mathcal{A}}^0 (\mathbf{k})} \bigr) \mathcal{R} (\mathbf{k}, \varepsilon)^{s/2} \bigr\|_{L_2(\Omega) \to L_2 (\Omega)} \le \mathcal{C}(\tau) \varepsilon
	\end{equation*}
	выполнялась при почти всех $\mathbf{k} = t \boldsymbol{\theta} \in \widetilde{\Omega}$ и достаточно малых $\varepsilon > 0$.
\end{thrm}
Теорема~\ref{hatA(k)_exp_smooth_shrp_thrm_1} была доказана в~\cite[теорема~9.8]{Su2017}.

\subsection{Подтверждение точности относительно времени}
Применение теоремы~\ref{abstr_exp_time_shrp_thrm_1} позволяет подтвердить точность теоремы~\ref{hatA(k)_exp_general_thrm} в отношении зависимости оценки от времени.
\begin{thrm}
		\label{hatA(k)_exp_time_shrp_thrm_1}
		Пусть $\widehat{N}_0 (\boldsymbol{\theta}_0) \ne 0$ при некотором $\boldsymbol{\theta}_0 \in \mathbb{S}^{d-1}$. Тогда не существует положительной функции $\mathcal{C}(\tau)$ такой, что $\lim_{\tau \to \infty} \mathcal{C}(\tau)/ |\tau| = 0$ и выполнена оценка
		\begin{equation}
		\label{hatA(k)_exp_time_shrp_est_1}
		\bigl\| \bigl( e^{-i \tau \varepsilon^{-2} \widehat{\mathcal{A}} (\mathbf{k})}  - e^{-i \tau \varepsilon^{-2} \widehat{\mathcal{A}}^0 (\mathbf{k})} \bigr) \mathcal{R} (\mathbf{k}, \varepsilon)^{3/2} \bigr\|_{L_2(\Omega) \to L_2 (\Omega)} \le \mathcal{C}(\tau) \varepsilon
		\end{equation}
		при всех $\tau \in \mathbb{R}$, почти всех $\mathbf{k} = t \boldsymbol{\theta} \in \widetilde{\Omega}$ и достаточно малых $\varepsilon > 0$.
\end{thrm}

\begin{proof}[Доказательство\nopunct] проведём от противного.  Предположим, что найдётся функция $\mathcal{C}(\tau) > 0$ такая, что $\lim_{\tau \to \infty} \mathcal{C}(\tau)/ |\tau| = 0$ и выполнена оценка~(\ref{hatA(k)_exp_time_shrp_est_1}) при почти всех $\mathbf{k} \in \widetilde{\Omega}$ и достаточно малом $\varepsilon > 0$. Учитывая~(\ref{R_P}), (\ref{R(k,eps)(I-P)_est}), а также оценку
\begin{equation}
\label{hatA(k)_exp_shrp_f1}
\bigl\| \widehat{F} (\mathbf{k}) - \widehat{P} \bigr\|_{L_2 (\Omega) \to L_2 (\Omega)} \le \widehat{C}_1 |\mathbf{k}|, \qquad |\mathbf{k}| \le \widehat{t}^{\,0},
\end{equation}
(см.~(\ref{abstr_F(t)_threshold})), убеждаемся, что найдётся функция $\widetilde{\mathcal{C}}(\tau) > 0$ такая, что $\lim_{\tau \to \infty} \widetilde{\mathcal{C}}(\tau)/ |\tau| = 0$ и выполнена оценка
\begin{equation}
\label{hatA(k)_exp_shrp_f2}
\bigl\|  e^{-i \tau \varepsilon^{-2} \widehat{\mathcal{A}} ( \mathbf{k})} \widehat{F} (\mathbf{k})  - e^{-i \tau \varepsilon^{-2} \widehat{\mathcal{A}}^0 (\mathbf{k})} \widehat{P} \bigr\|_{L_2(\Omega) \to L_2 (\Omega) } \varepsilon^3 (|\mathbf{k}|^2 + \varepsilon^2)^{-3/2}  \le \widetilde{\mathcal{C}}(\tau) \varepsilon
\end{equation}
при почти всех $\mathbf{k} \in \widetilde{\Omega}$ в шаре $|\mathbf{k}| \le \widehat{t}^{\,0}$ и достаточно малых $\varepsilon > 0$.
Оператор, стоящий под знаком нормы в~(\ref{hatA(k)_exp_shrp_f2}), непрерывен по $\mathbf{k}$ в шаре $|\mathbf{k}| \le \widehat{t}^{\,0}$ при фиксированных $\tau$ и $\varepsilon$ (см.~\cite[лемма~9.9]{Su2017}). Следовательно, оценка~(\ref{hatA(k)_exp_shrp_f2}) справедлива при всех значениях $\mathbf{k}$ из данного шара. В частности, она верна в точке $\mathbf{k} = t\boldsymbol{\theta}_0$, если $t \le \widehat{t}^{\,0}$. Применяя снова неравенство~(\ref{hatA(k)_exp_shrp_f1}), получаем, что справедливо неравенство
\begin{equation}
\label{hatA(k)_exp_shrp_f3}
\bigl\| \bigl( e^{-i \tau \varepsilon^{-2} \widehat{\mathcal{A}} ( t \boldsymbol{\theta}_0)}  - e^{-i \tau \varepsilon^{-2} \widehat{\mathcal{A}}^0 (t \boldsymbol{\theta}_0)} \bigr) \widehat{P} \bigr\|_{L_2(\Omega) \to L_2(\Omega)} \varepsilon^3 (t^2 + \varepsilon^2)^{-3/2}  \le \check{\mathcal{C}}(\tau) \varepsilon
\end{equation}
c функцией $\check{\mathcal{C}}(\tau) > 0$ такой, что $\lim_{\tau \to \infty} \check{\mathcal{C}}(\tau)/ |\tau| = 0$,  при всех $t \le \widehat{t}^{\,0}$ и достаточно малых $\varepsilon > 0$. 

Оценка~(\ref{hatA(k)_exp_shrp_f3}) в абстрактных терминах соответствует оценке~(\ref{abstr_exp_time_shrp_est_1}). Поскольку по условию выполнено $\widehat{N}_{0}(\boldsymbol{\theta}_0) \ne 0$, то применение теоремы~\ref{abstr_exp_time_shrp_thrm_1} приводит нас к противоречию.
\end{proof}

Аналогично, применение теоремы~\ref{abstr_exp_time_shrp_thrm_2} позволяет подтвердить точность теорем~\ref{hatA(k)_exp_enchcd_thrm_1}, \ref{hatA(k)_exp_enchcd_thrm_2}.

\begin{thrm}
	\label{hatA(k)_exp_time_shrp_thrm_2}
	Пусть $\widehat{N}_0 (\boldsymbol{\theta}) = 0$ при всех $\boldsymbol{\theta} \in \mathbb{S}^{d-1}$ и пусть $\widehat{\mathcal{N}}^{(q)} (\boldsymbol{\theta}_0) \ne 0$  при некоторых $q \in \{1,\ldots,p(\boldsymbol{\theta}_0)\}$ и $\boldsymbol{\theta}_0 \in \mathbb{S}^{d-1}$. Тогда не существует положительной функции $\mathcal{C}(\tau)$ такой, что $\lim_{\tau \to \infty} \mathcal{C}(\tau)/ |\tau|^{1/2} = 0$ и выполнена оценка
	\begin{equation*}
	\bigl\| \bigl( e^{-i \tau \varepsilon^{-2} \widehat{\mathcal{A}} (\mathbf{k})}  - e^{-i \tau \varepsilon^{-2} \widehat{\mathcal{A}}^0 (\mathbf{k})} \bigr) \mathcal{R} (\mathbf{k}, \varepsilon) \bigr\|_{L_2(\Omega) \to L_2 (\Omega)} \le \mathcal{C}(\tau) \varepsilon
	\end{equation*}
	при всех $\tau \in \mathbb{R}$, почти всех $\mathbf{k} = t \boldsymbol{\theta} \in \widetilde{\Omega}$ и достаточно малых $\varepsilon > 0$.
\end{thrm}

\section{Оператор $\mathcal{A} (\mathbf{k})$. Применение схемы~\S\ref{abstr_sndw_section}}
\subsection{Оператор $\mathcal{A} (\mathbf{k})$}

Оператор $\mathcal{A} (\mathbf{k}) = f^* \widehat{\mathcal{A}} (\mathbf{k}) f$ изучается на основании схемы~\S\ref{abstr_sndw_section}. Сейчас $\mathfrak{H} = \widehat{\mathfrak{H}} = L_2 (\Omega; \mathbb{C}^n)$, $\mathfrak{H}_* = L_2 (\Omega; \mathbb{C}^m)$, роль оператора $A(t)$ играет $A(t, \boldsymbol{\theta}) = \mathcal{A}(\mathbf{k})$, роль оператора $\widehat{A}(t)$ играет $\widehat{A}(t, \boldsymbol{\theta}) = \widehat{\mathcal{A}}(\mathbf{k})$. В качестве изоморфизма $M$ выступает оператор умножения на матричнозначную функцию $f(\mathbf{x})$. Оператор $Q$ является оператором умножения на матрицу-функцию $Q(\mathbf{x}) = (f (\mathbf{x}) f (\mathbf{x})^*)^{-1}$. Блок оператора $Q$ в подпространстве $\widehat{\mathfrak{N}}$ (см.~(\ref{Ker3}))~--- это оператор умножения на постоянную матрицу $\overline{Q} = (\underline{f f^*})^{-1} = |\Omega|^{-1} \int_{\Omega} (f (\mathbf{x}) f (\mathbf{x})^*)^{-1} d \mathbf{x}$. Далее, $M_0$ есть оператор умножения на постоянную матрицу
\begin{equation}
\label{f0}
f_0 = (\overline{Q})^{-1/2} = (\underline{f f^*})^{1/2}.
\end{equation}
Отметим элементарные неравенства $| f_0 | \le \| f \|_{L_{\infty}}$, $| f_0^{-1} | \le \| f^{-1} \|_{L_{\infty}}$.

В $L_2 (\mathbb{R}^d; \mathbb{C}^n)$ определим оператор
\begin{equation}
\label{A0}
\mathcal{A}^0 \coloneqq f_0 \widehat{\mathcal{A}}^0 f_0 = f_0 b(\mathbf{D})^* g^0 b(\mathbf{D}) f_0.
\end{equation}
Пусть $\mathcal{A}^0 (\mathbf{k})$~--- соответствующее операторное семейство в $L_2 (\Omega; \mathbb{C}^n)$. Тогда $\mathcal{A}^0 (\mathbf{k}) = f_0 \widehat{\mathcal{A}}^0 (\mathbf{k}) f_0$. С учётом~(\ref{Ker3}) и~(\ref{hatS_P=hatA^0_P}) справедливо тождество 
\begin{equation}
\label{A^0(k)P}
f_0 \widehat{S} (\mathbf{k}) f_0 \widehat{P} = \mathcal{A}^0 (\mathbf{k}) \widehat{P}.
\end{equation}

\subsection{Аналитические ветви собственных значений и собственных элементов}
Согласно~(\ref{abstr_S_Shat}), спектральный росток $S(\boldsymbol{\theta})$ оператора $A (t, \boldsymbol{\theta})$, действующий в подпространстве $\mathfrak{N}$ (см.~(\ref{frakN})), представляется в виде $S(\boldsymbol{\theta}) = P f^* b(\boldsymbol{\theta})^* g^0 b(\boldsymbol{\theta}) f|_{\mathfrak{N}}$, где $P$~--- ортопроектор пространства $L_2 (\Omega; \mathbb{C}^n)$ на $\mathfrak{N}$.

Аналитические (по $t$) ветви собственных значений $\lambda_l (t, \boldsymbol{\theta})$ и собственных элементов $\varphi_l (t, \boldsymbol{\theta})$ оператора $A (t, \boldsymbol{\theta})$ допускают степенные разложения вида~(\ref{abstr_A(t)_eigenvalues_series}), (\ref{abstr_A(t)_eigenvectors_series}) с коэффициентами, зависящими от $\boldsymbol{\theta}$:
\begin{align}
\label{A_eigenvalues_series}
 \lambda_l (t, \boldsymbol{\theta}) &= \gamma_l (\boldsymbol{\theta}) t^2 + \mu_l (\boldsymbol{\theta}) t^3 + \nu_l (\boldsymbol{\theta}) t^4 + \ldots, & l &= 1, \ldots, n, 
\\
\label{A_eigenvectors_series}
\varphi_l (t, \boldsymbol{\theta}) &= \omega_l (\boldsymbol{\theta}) + t \psi^{(1)}_l (\boldsymbol{\theta}) + \ldots, & l &= 1, \ldots, n.
\end{align}

При этом $\omega_1 (\boldsymbol{\theta}), \ldots, \omega_n (\boldsymbol{\theta})$ образуют ортонормированный базис в подпространстве $\mathfrak{N}$, а векторы $\zeta_l (\boldsymbol{\theta}) = f \omega_l (\boldsymbol{\theta})$, $l = 1, \ldots, n$, образуют базис в $\widehat{\mathfrak{N}}$~(см.~(\ref{Ker3})), ортонормированный с весом $\overline{Q}$. Числа $\gamma_l (\boldsymbol{\theta})$ и элементы $\omega_l (\boldsymbol{\theta})$ являются собственными для спектрального ростка $S(\boldsymbol{\theta})$. Согласно~(\ref{abstr_hatS_gener_spec_problem}),
\begin{equation}
\label{hatS_gener_spec_problem}
b(\boldsymbol{\theta})^* g^0 b(\boldsymbol{\theta}) \zeta_l (\boldsymbol{\theta}) = \gamma_l (\boldsymbol{\theta}) \overline{Q} \zeta_l (\boldsymbol{\theta}), \qquad l = 1, \ldots, n.
\end{equation}

\subsection{Оператор $\widehat{N}_Q (\boldsymbol{\theta})$}
Нам понадобится описать оператор $\widehat{N}_Q$ (см. п.~\ref{abstr_hatZ_Q_and_hatN_Q_section}). Для этого введём $\Gamma$-периодическое решение $\Lambda_Q(\mathbf{x})$ задачи
\begin{equation*}
b(\mathbf{D})^* g(\mathbf{x}) (b(\mathbf{D}) \Lambda_Q(\mathbf{x}) + \mathbf{1}_m) = 0, \qquad \int_{\Omega} Q(\mathbf{x}) \Lambda_Q(\mathbf{x}) \, d \mathbf{x} = 0.
\end{equation*}
Ясно, что $\Lambda_Q(\mathbf{x}) = \Lambda(\mathbf{x}) - (\overline{Q})^{-1} (\overline{Q \Lambda})$. Как проверено в~\cite[\S5]{BSu2005-2}, оператор $\widehat{N}_Q (\boldsymbol{\theta})$ сейчас принимает вид
\begin{align}
\label{N_Q(theta)}
\widehat{N}_Q (\boldsymbol{\theta}) &= b(\boldsymbol{\theta})^* L_Q (\boldsymbol{\theta}) b(\boldsymbol{\theta}) \widehat{P},\\
\notag
L_Q (\boldsymbol{\theta}) &\coloneqq | \Omega |^{-1} \int_{\Omega} (\Lambda_Q(\mathbf{x})^*b(\boldsymbol{\theta})^* \widetilde{g} (\mathbf{x}) + \widetilde{g} (\mathbf{x})^* b(\boldsymbol{\theta}) \Lambda_Q(\mathbf{x}))\, d \mathbf{x}.
\end{align}
В~\cite[\S5]{BSu2005-2} указаны некоторые достаточные условия, при которых оператор~(\ref{N_Q(theta)}) обращается в ноль.
\begin{proposition}[\cite{BSu2005-2}]
	Пусть выполнено хотя бы одно из следующих предположений:
	\begin{enumerate}[label=\emph{\arabic*$^{\circ}.$}, ref=\arabic*$^{\circ}$, leftmargin=2.5\parindent]
		\setlength\itemsep{-0.1em}
		\item $\mathcal{A} = f(\mathbf{x})^*\mathbf{D}^* g(\mathbf{x}) \mathbf{D}f(\mathbf{x})$, где $g(\mathbf{x})$~--- симметричная матрица с вещественными элементами.
		\item Выполнены соотношения~\emph{(\ref{g0=overline_g_relat})}, т.~е. $g^0 = \overline{g}$.
	\end{enumerate}
	Тогда $\widehat{N}_Q (\boldsymbol{\theta}) = 0$ при всех $\boldsymbol{\theta} \in \mathbb{S}^{d-1}$.
\end{proposition}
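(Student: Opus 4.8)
План состоит в том, чтобы, как и при доказательстве предложения~\ref{N=0_proposit}, воспользоваться явным представлением~(\ref{N_Q(theta)}): $\widehat{N}_Q (\boldsymbol{\theta}) = b(\boldsymbol{\theta})^* L_Q (\boldsymbol{\theta}) b(\boldsymbol{\theta}) \widehat{P}$, и при каждом из предположений показать, что \textquotedblleft окаймлённая\textquotedblright{} матрица $b(\boldsymbol{\theta})^* L_Q (\boldsymbol{\theta}) b(\boldsymbol{\theta})$ обращается в ноль. Здесь $L_Q$ строится по функции $\Lambda_Q = \Lambda - (\overline{Q})^{-1} (\overline{Q \Lambda})$, отличающейся от решения $\Lambda$ задачи~(\ref{equation_for_Lambda}) лишь постоянным слагаемым; поэтому всё отличие от случая $f = \mathbf{1}_n$ (то есть от формулы~(\ref{hatN(theta)}) для $\widehat{N} (\boldsymbol{\theta})$) сведётся к учёту весового множителя $Q = (f f^*)^{-1}$.

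Предположение~2$^{\circ}$ разбирается непосредственно. Равенство $g^0 = \overline{g}$ равносильно соотношениям~(\ref{g0=overline_g_relat}), то есть $b(\mathbf{D})^* g(\mathbf{x}) = 0$ (по столбцам). Тогда в уравнении~(\ref{equation_for_Lambda}) свободный член $b(\mathbf{D})^* g$ исчезает, и остаётся однородное уравнение $b(\mathbf{D})^* g\, b(\mathbf{D}) \Lambda = 0$; спаривая его с $\Lambda$ и пользуясь положительной определённостью $g$, получаем $b(\mathbf{D}) \Lambda = 0$, а ввиду условия $\rank b(\boldsymbol{\xi}) = n$ и нулевого среднего отсюда $\Lambda = 0$. Следовательно, $\Lambda_Q = 0$, $L_Q (\boldsymbol{\theta}) = 0$ и $\widehat{N}_Q (\boldsymbol{\theta}) = 0$ при всех $\boldsymbol{\theta} \in \mathbb{S}^{d-1}$. Отмечу, что, в отличие от предложения~\ref{N=0_proposit}, случай $g^0 = \underline{g}$ здесь не фигурирует: окаймляющий множитель $f$ разрушает структуру, на которой держится это условие.

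Для предположения~1$^{\circ}$ я собираюсь повторить схему доказательства предложения~\ref{N=0_proposit}(\ref{N=0_proposit_p1}). При $b(\mathbf{D}) = \mathbf{D}$ и вещественной симметричной $g(\mathbf{x})$ решение $\Lambda (\mathbf{x})$, матрица $\widetilde{g} (\mathbf{x})$ и символ $b(\boldsymbol{\theta})$ вещественны, и тогда форма $b(\boldsymbol{\theta})^* L (\boldsymbol{\theta}) b(\boldsymbol{\theta})$ зануляется за счёт антисимметрии подынтегрального выражения относительно переставляемых индексов дифференцирования при симметричном множителе вида $\theta_j \theta_l \theta_r$. Нужно провести то же рассуждение для $b(\boldsymbol{\theta})^* L_Q (\boldsymbol{\theta}) b(\boldsymbol{\theta})$. Основное препятствие в том, что весовой оператор $Q = (f f^*)^{-1}$ в общем случае комплекснозначен (матрица $f$ не предполагается вещественной), так что поправка $(\overline{Q})^{-1} (\overline{Q \Lambda})$ и сама функция $\Lambda_Q$ становятся комплексными; требуется аккуратно проследить, что при окаймлении множителями $b(\boldsymbol{\theta})$ вклад этой поправки всё же сокращается. Альтернативный путь~--- воспользоваться леммой~\ref{abstr_N_hatNQ_lemma} и соотношением~(\ref{abstr_N_hatN_Q}), сведя требуемое равенство $\widehat{N}_Q (\boldsymbol{\theta}) = 0$ к проверке $N = 0$ для семейства $\mathcal{A} (\mathbf{k}) = f^* \widehat{\mathcal{A}} (\mathbf{k}) f$ и установив последнее прямым вычислением коэффициентов степенных разложений в \textquotedblleft вещественном\textquotedblright{} случае.
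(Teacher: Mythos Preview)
The paper does not prove this proposition; it is quoted from~\cite[\S5]{BSu2005-2}, so there is no in-paper argument to compare against directly.

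Your treatment of~$2^\circ$ is correct and complete: relations~(\ref{g0=overline_g_relat}) kill the inhomogeneous term in~(\ref{equation_for_Lambda}), whence $\Lambda=0$, $\Lambda_Q=0$, $L_Q(\boldsymbol\theta)=0$, and $\widehat{N}_Q(\boldsymbol\theta)=0$.

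For~$1^\circ$ the plan is viable, but two details are off. First, with $b(\mathbf D)=\mathbf D=-i\nabla$ and real symmetric~$g$, the solution $\Lambda$ of~(\ref{equation_for_Lambda}) is \emph{purely imaginary}, not real: its entries are $i\psi_j$ for the real solutions $\psi_j$ of~(\ref{psi_equation}); it is $\widetilde g$ and $g^0$ that come out real. Second, the worry about a complex weight is unfounded: here $n=1$, so $Q=(ff^*)^{-1}=|f|^{-2}$ is a real positive scalar function regardless of whether $f$ itself is real. With this in hand the direct route goes through cleanly. Writing $\Lambda_Q=\Lambda-C$ with the constant $C=(\overline Q)^{-1}\overline{Q\Lambda}$ (purely imaginary, since $Q$ is real and $\Lambda$ is purely imaginary), one finds
\[
b(\boldsymbol\theta)^*\bigl(L_Q(\boldsymbol\theta)-L(\boldsymbol\theta)\bigr)b(\boldsymbol\theta)
= -\bigl(\overline{C b(\boldsymbol\theta)}\,\widehat{S}(\boldsymbol\theta)+\widehat{S}(\boldsymbol\theta)\,C b(\boldsymbol\theta)\bigr)
= -2\,\widehat{S}(\boldsymbol\theta)\,\Re\bigl(C b(\boldsymbol\theta)\bigr)=0,
\]
since $\widehat{S}(\boldsymbol\theta)\in\mathbb R$ and $C b(\boldsymbol\theta)\in i\mathbb R$. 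Combined with $\widehat{N}(\boldsymbol\theta)=0$ from Proposition~\ref{N=0_proposit}(\ref{N=0_proposit_p1}), this yields $\widehat{N}_Q(\boldsymbol\theta)=0$. Your fallback via Lemma~\ref{abstr_N_hatNQ_lemma} is legitimate but no easier: computing $N(\boldsymbol\theta)$ for $\mathcal A(\mathbf k)=f^*\widehat{\mathcal A}(\mathbf k)f$ reintroduces the complex factor~$f$ into all the auxiliary objects.
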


Напомним (см.~п.~\ref{abstr_hatZ_Q_and_hatN_Q_section}), что  $\widehat{N}_Q (\boldsymbol{\theta}) = \widehat{N}_{0, Q} (\boldsymbol{\theta}) + \widehat{N}_{*,Q} (\boldsymbol{\theta})$. Согласно~(\ref{abstr_hatN_0Q_N_*Q}),
\begin{equation*}
\widehat{N}_{0, Q} (\boldsymbol{\theta}) = \sum_{l=1}^{n} \mu_l (\boldsymbol{\theta}) (\cdot, \overline{Q} \zeta_l(\boldsymbol{\theta}))_{L_2(\Omega)} \overline{Q} \zeta_l(\boldsymbol{\theta}).
\end{equation*}
При этом
\begin{equation*}
(\widehat{N}_Q (\boldsymbol{\theta}) \zeta_l (\boldsymbol{\theta}), \zeta_l (\boldsymbol{\theta}))_{L_2 (\Omega)} = (\widehat{N}_{0,Q} (\boldsymbol{\theta}) \zeta_l (\boldsymbol{\theta}), \zeta_l (\boldsymbol{\theta}))_{L_2 (\Omega)} = \mu_l (\boldsymbol{\theta}), \qquad l=1, \ldots, n.
\end{equation*}
В~\cite[предложение~5.2]{BSu2005-2} было доказано следующее утверждение.
\begin{proposition}[\cite{BSu2005-2}]
	Пусть $b(\boldsymbol{\theta})$, $g (\mathbf{x})$ и $Q(\mathbf{x})$~--- матрицы с вещественными элементами. Пусть в разложениях~\emph{(\ref{A_eigenvectors_series})}  \textquotedblleft зародыши\textquotedblright \ $\omega_l (\boldsymbol{\theta})$, $l = 1, \ldots, n$, можно выбрать так, чтобы векторы  $\zeta_l (\boldsymbol{\theta}) = f \omega_l (\boldsymbol{\theta})$ оказались вещественными. Тогда в~\emph{(\ref{A_eigenvalues_series})} выполнено $\mu_l (\boldsymbol{\theta}) = 0$, $l=1, \ldots, n$, то есть, $\widehat{N}_{0,Q} (\boldsymbol{\theta}) = 0$ при всех $\boldsymbol{\theta} \in \mathbb{S}^{d-1}$.
\end{proposition}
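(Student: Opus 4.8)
План состоит в том, чтобы свести утверждение к проверке равенства нулю квадратичной формы $\langle L_Q(\boldsymbol{\theta})\mathbf{w}_l,\mathbf{w}_l\rangle$ на вещественных векторах и показать, что матрица $L_Q(\boldsymbol{\theta})$ оказывается \emph{чисто мнимой}. Исходной точкой служит уже установленное перед формулировкой равенство $\mu_l(\boldsymbol{\theta}) = (\widehat{N}_Q(\boldsymbol{\theta})\zeta_l(\boldsymbol{\theta}),\zeta_l(\boldsymbol{\theta}))_{L_2(\Omega)}$ вместе с представлением~(\ref{N_Q(theta)}). Поскольку $\zeta_l \in \widehat{\mathfrak{N}}$ (см.~(\ref{Ker3})), то $\widehat{P}\zeta_l = \zeta_l$, и с точностью до положительного множителя $|\Omega|$ получаем $\mu_l = \langle L_Q(\boldsymbol{\theta})\mathbf{w}_l,\mathbf{w}_l\rangle_{\mathbb{C}^m}$, где $\mathbf{w}_l = b(\boldsymbol{\theta})\zeta_l \in \mathbb{C}^m$. По предположению $\zeta_l = f\omega_l$ вещественны, а $b(\boldsymbol{\theta})$ вещественна, поэтому векторы $\mathbf{w}_l$ вещественны.

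Сначала я установил бы, что решение $\Lambda(\mathbf{x})$ задачи~(\ref{equation_for_Lambda}) чисто мнимо. Для этого беру комплексное сопряжение уравнения~(\ref{equation_for_Lambda}). Так как коэффициенты $b_l$ вещественны и $D_l = -i\partial_l$, оператор $b(\mathbf{D}) = -i\sum_l b_l\partial_l$ удовлетворяет соотношениям $\overline{b(\mathbf{D})} = -b(\mathbf{D})$, $\overline{b(\mathbf{D})^*} = -b(\mathbf{D})^*$, а $\overline{g} = g$ в силу вещественности $g$. Сопрягая~(\ref{equation_for_Lambda}), получаю $b(\mathbf{D})^* g(\mathbf{x})(b(\mathbf{D})(-\overline{\Lambda}) + \mathbf{1}_m) = 0$ при том же условии нулевого среднего $\int_\Omega(-\overline{\Lambda})\,d\mathbf{x} = 0$. В силу единственности решения задачи~(\ref{equation_for_Lambda}) отсюда $-\overline{\Lambda} = \Lambda$, то есть $\Lambda$ чисто мнима. Тогда $b(\mathbf{D})\Lambda = -i\sum_l b_l\partial_l\Lambda$ вещественна, а значит и $\widetilde{g} = g(b(\mathbf{D})\Lambda + \mathbf{1}_m)$ (см.~(\ref{g_tilde})) есть вещественная матрица-функция.

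Далее я воспользовался бы явной формулой $\Lambda_Q = \Lambda - (\overline{Q})^{-1}(\overline{Q\Lambda})$, где черта означает усреднение по ячейке (а не сопряжение). Так как $Q(\mathbf{x})$ вещественна, матрицы $\overline{Q}$ и $(\overline{Q})^{-1}$ вещественны, а $\overline{Q\Lambda}$ чисто мнима (интеграл от произведения вещественной и чисто мнимой матриц); следовательно $\Lambda_Q$ чисто мнима. Подставляя это в~(\ref{N_Q(theta)}), вижу, что оба слагаемых $\Lambda_Q^* b(\boldsymbol{\theta})^*\widetilde{g}$ и $\widetilde{g}^* b(\boldsymbol{\theta})\Lambda_Q$ суть произведения чисто мнимой матрицы на вещественные, то есть чисто мнимы; поэтому матрица $L_Q(\boldsymbol{\theta})$ чисто мнима. Вместе с тем $L_Q(\boldsymbol{\theta})$ эрмитова, поскольку имеет вид $A + A^*$. Эрмитова чисто мнимая матрица представима как $L_Q(\boldsymbol{\theta}) = iB$ с вещественной кососимметричной $B$. Тогда $\mu_l = \langle iB\mathbf{w}_l,\mathbf{w}_l\rangle = i\sum_{j,k} B_{jk}(\mathbf{w}_l)_k(\mathbf{w}_l)_j = 0$ ввиду вещественности $\mathbf{w}_l$ и кососимметричности $B$. Наконец, обнуление всех $\mu_l(\boldsymbol{\theta})$ вместе с формулой для $\widehat{N}_{0,Q}(\boldsymbol{\theta})$ даёт $\widehat{N}_{0,Q}(\boldsymbol{\theta}) = 0$ при всех $\boldsymbol{\theta} \in \mathbb{S}^{d-1}$.

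Основным препятствием я считаю аккуратное обоснование чистой мнимости $\Lambda$ и $\Lambda_Q$: нужно корректно провести сопряжение дифференциального уравнения, правильно учтя знаки в $\overline{b(\mathbf{D})} = -b(\mathbf{D})$, сослаться на единственность решения с нулевым средним, и при этом не смешать усреднение по ячейке (черта в $\overline{Q}$, $\overline{Q\Lambda}$) с комплексным сопряжением. Остальные шаги~--- прямые алгебраические проверки. Отмечу, что схема дословно повторяет доказательство предыдущего предложения для случая $f = \mathbf{1}_n$, с заменой $\Lambda$, $\widehat{N}_0$ на $\Lambda_Q$, $\widehat{N}_{0,Q}$.
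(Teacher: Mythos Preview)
Your argument is correct. The paper does not give its own proof of this proposition; it merely cites \cite[предложение~5.2]{BSu2005-2}. Your route --- showing that in the real case $\Lambda$ (hence $\Lambda_Q$) is purely imaginary, whence $\widetilde{g}$ is real and $L_Q(\boldsymbol{\theta})$ is a Hermitian purely imaginary matrix, so that its quadratic form vanishes on real vectors --- is precisely the standard argument used for this type of statement and is what the cited reference contains. The key steps (conjugation of~(\ref{equation_for_Lambda}) using $\overline{b(\mathbf{D})}=-b(\mathbf{D})$, uniqueness to get $\overline{\Lambda}=-\Lambda$, passage to $\Lambda_Q$ via the explicit formula, and the elementary fact that $\langle iB\mathbf{w},\mathbf{w}\rangle=0$ for real $\mathbf{w}$ and real skew-symmetric $B$) are all in order.
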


В рассматриваемом \textquotedblleft вещественном\textquotedblright \ случае $\widehat{S} (\boldsymbol{\theta})$ и $\overline{Q}$  являются симметричными вещественными матрицами. Ясно, что в случае простого собственного значения $\gamma_j (\boldsymbol{\theta})$ обобщённой задачи~(\ref{hatS_gener_spec_problem}) собственный вектор $\zeta_j (\boldsymbol{\theta}) = f \omega_j (\boldsymbol{\theta})$ определяется однозначно с точностью до фазового множителя, и его всегда можно выбрать вещественным. Мы получаем следующее следствие.

\begin{corollary}
	\label{sndw_real_spec_simple_coroll}
	Пусть $b(\boldsymbol{\theta})$, $g (\mathbf{x})$ и $Q(\mathbf{x})$~--- матрицы с вещественными элементами. Пусть обобщённая спектральная задача~\emph{(\ref{hatS_gener_spec_problem})} имеет простой спектр. Тогда $\widehat{N}_{0,Q} (\boldsymbol{\theta}) = 0$ при всех $\boldsymbol{\theta} \in \mathbb{S}^{d-1}$.
\end{corollary}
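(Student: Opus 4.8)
План состоит в том, чтобы свести утверждение к предыдущему предложению, проверив, что в рассматриваемом вещественном случае при простом спектре задачи~(\ref{hatS_gener_spec_problem}) векторы $\zeta_l(\boldsymbol{\theta}) = f\omega_l(\boldsymbol{\theta})$ допускают вещественный выбор. Прежде всего я бы отметил, что эффективная матрица $g^0$ вещественна: так как $b(\boldsymbol{\theta})$ и $g(\mathbf{x})$ вещественны, уравнение~(\ref{equation_for_Lambda}) для $\Lambda(\mathbf{x})$ имеет вещественные коэффициенты, и его решение можно взять вещественным; тогда матрица-функция~(\ref{g_tilde}) и матрица~(\ref{g0}) также вещественны. Следовательно, росток $\widehat{S}(\boldsymbol{\theta}) = b(\boldsymbol{\theta})^* g^0 b(\boldsymbol{\theta})$~--- вещественная симметричная матрица, а $\overline{Q} = (\underline{ff^*})^{-1}$ (в силу вещественности $Q(\mathbf{x})$)~--- вещественная симметричная положительно определённая матрица.

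Далее я бы зафиксировал $\boldsymbol{\theta} \in \mathbb{S}^{d-1}$ и рассмотрел~(\ref{hatS_gener_spec_problem}) как обобщённую симметричную задачу на собственные значения $\widehat{S}(\boldsymbol{\theta})\zeta = \gamma\overline{Q}\zeta$ с вещественными матрицами (при этом все $\gamma_l(\boldsymbol{\theta})$ заведомо вещественны и положительны). При простом собственном значении $\gamma_l(\boldsymbol{\theta})$ соответствующее собственное подпространство одномерно и совпадает с ядром вещественной матрицы $\widehat{S}(\boldsymbol{\theta}) - \gamma_l(\boldsymbol{\theta})\overline{Q}$; поэтому оно содержит ненулевой вещественный вектор и, будучи одномерным, порождается им. Нормируя этот вектор условием $(\overline{Q}\zeta_l, \zeta_l)_{L_2(\Omega)} = 1$, получаю вещественный $\zeta_l(\boldsymbol{\theta})$, то есть зародыши $\omega_l(\boldsymbol{\theta})$ можно выбрать так, чтобы $\zeta_l(\boldsymbol{\theta}) = f\omega_l(\boldsymbol{\theta})$ были вещественными,~--- а это в точности гипотеза предыдущего предложения.

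Остаётся сослаться на предыдущее предложение: поскольку при каждом $\boldsymbol{\theta} \in \mathbb{S}^{d-1}$ векторы $\zeta_l(\boldsymbol{\theta})$ вещественны, выполнено $\mu_l(\boldsymbol{\theta}) = 0$, $l = 1, \ldots, n$, то есть $\widehat{N}_{0,Q}(\boldsymbol{\theta}) = 0$. Главная (хотя и элементарная) тонкость здесь~--- шаг с выбором вещественного собственного вектора: существенно, что простота спектра даёт одномерность собственного подпространства, в котором вещественный вектор определён однозначно с точностью до вещественного множителя. При кратном собственном значении такой выбор уже не гарантируется (ср. замечание после следствия~\ref{real_S_spec_simple_coroll} для случая $f = \mathbf{1}_n$), и тогда $\widehat{N}_{0,Q}(\boldsymbol{\theta})$ может быть отличен от нуля.
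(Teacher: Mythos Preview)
Your proof is correct and follows essentially the same route as the paper: the paragraph preceding the corollary observes that in the real case $\widehat{S}(\boldsymbol{\theta})$ and $\overline{Q}$ are real symmetric matrices, so a simple eigenvalue of~(\ref{hatS_gener_spec_problem}) has a one-dimensional eigenspace in which a real eigenvector can be chosen, and then the previous proposition applies. You simply spell out more explicitly why $g^0$ (and hence $\widehat{S}(\boldsymbol{\theta})$) is real, which the paper leaves implicit.
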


\subsection{Операторы $\widehat{Z}_{2,Q}(\boldsymbol{\theta})$, $\widehat{R}_{2,Q}(\boldsymbol{\theta})$, $\widehat{N}_{1,Q}^0(\boldsymbol{\theta})$}
Опишем операторы $\widehat{Z}_{2,Q}$, $\widehat{R}_{2,Q}$, $\widehat{N}_{1,Q}^0$ в абстрактных терминах определённые в п.~\ref{abstr_hatZ2_Q_hatR2_Q_N1^0_Q_section}. Пусть $\Lambda^{(2)}_{Q,l} (\mathbf{x})$~--- $\Gamma$-периодическое решение задачи
\begin{equation*}
b(\mathbf{D})^* g(\mathbf{x}) (b(\mathbf{D}) \Lambda^{(2)}_{Q,l} (\mathbf{x}) + b_l \Lambda_{Q}(\mathbf{x})) = -b_l^* \widetilde{g} (\mathbf{x}) + Q(\mathbf{x}) (\overline{Q})^{-1} b_l^* g^0, \qquad \int_{\Omega} Q(\mathbf{x}) \Lambda^{(2)}_{Q,l} (\mathbf{x}) \, d \mathbf{x} = 0.
\end{equation*}
Положим $\Lambda^{(2)}_Q (\mathbf{x}; \boldsymbol{\theta}) \coloneqq \sum_{l=1}^{d} \Lambda^{(2)}_{Q,l} (\mathbf{x}) \theta_l$. Как проверено в~\cite[п.~8.4]{VSu2012}
\begin{gather*}
\widehat{Z}_{2,Q}(\boldsymbol{\theta}) = \Lambda^{(2)} (\mathbf{x}; \boldsymbol{\theta}) b(\boldsymbol{\theta}) \widehat{P}, \qquad
\widehat{R}_{2,Q}(\boldsymbol{\theta}) = h(\mathbf{x}) (b(\mathbf{D}) \Lambda^{(2)}_Q (\mathbf{x}; \boldsymbol{\theta}) + b(\boldsymbol{\theta}) \Lambda_Q(\mathbf{x})) b(\boldsymbol{\theta}).
\end{gather*}
Наконец, в~\cite[п.~8.5]{VSu2012} было получено представление
\begin{align*}
&\widehat{N}_{1,Q}^0(\boldsymbol{\theta}) = b(\boldsymbol{\theta})^* L_{2,Q}(\boldsymbol{\theta}) b(\boldsymbol{\theta}) \widehat{P}, \\
&\begin{multlined}[c][15cm]
L_{2,Q}(\boldsymbol{\theta}) \coloneqq |\Omega|^{-1} \int_{\Omega} \bigl(\Lambda^{(2)}_Q (\mathbf{x}; \boldsymbol{\theta})^* b(\boldsymbol{\theta})^* \widetilde{g}(\mathbf{x}) + \widetilde{g}(\mathbf{x})^* b(\boldsymbol{\theta}) \Lambda^{(2)}_Q (\mathbf{x}; \boldsymbol{\theta}) \bigr) \, d \mathbf{x} + \\ +
|\Omega|^{-1} \int_{\Omega} \bigl(b(\mathbf{D}) \Lambda^{(2)}_Q (\mathbf{x}; \boldsymbol{\theta}) + b(\boldsymbol{\theta}) \Lambda_Q (\mathbf{x})\bigr)^* g(\mathbf{x}) \bigl(b(\mathbf{D}) \Lambda^{(2)}_Q (\mathbf{x}; \boldsymbol{\theta}) + b(\boldsymbol{\theta}) \Lambda_Q(\mathbf{x}) \bigr) \, d \mathbf{x}.
\end{multlined}
\end{align*}

\subsection{Кратности собственных значений ростка}
\label{sndw_eigenval_multipl_section}
В данном пункте считаем, что $n \ge 2$. Перейдём к обозначениям, принятым в~п.~\ref{abstr_cluster_section}. Вообще говоря, количество $p(\boldsymbol{\theta})$ различных собственных значений $\gamma^{\circ}_1 (\boldsymbol{\theta}), \ldots, \gamma^{\circ}_{p(\boldsymbol{\theta})} (\boldsymbol{\theta})$ спектрального ростка $S(\boldsymbol{\theta})$ (или задачи~(\ref{hatS_gener_spec_problem})) и их кратности $k_1 (\boldsymbol{\theta}), \ldots, k_{p(\boldsymbol{\theta})} (\boldsymbol{\theta})$ зависят от параметра $\boldsymbol{\theta} \in \mathbb{S}^{d-1}$. При каждом фиксированном $\boldsymbol{\theta}$ через $\mathfrak{N}_j (\boldsymbol{\theta})$ обозначим собственное подпространство ростка $S (\boldsymbol{\theta})$, отвечающее собственному значению $\gamma^{\circ}_j (\boldsymbol{\theta})$. Тогда $f \mathfrak{N}_j (\boldsymbol{\theta})$~--- собственное подпространство задачи~(\ref{hatS_gener_spec_problem}), отвечающее тому же значению $\gamma^{\circ}_j (\boldsymbol{\theta})$. Введём обозначение $\mathcal{P}_j (\boldsymbol{\theta})$ для \textquotedblleft косого\textquotedblright \ проектора пространства $L_2(\Omega; \mathbb{C}^n)$ на подпространство $f \mathfrak{N}_j (\boldsymbol{\theta})$; $\mathcal{P}_j (\boldsymbol{\theta})$ ортогонален относительно скалярного произведения с весом $\overline{Q}$. Согласно~(\ref{abstr_hatN_0Q_N_*Q_invar_repr}),
\begin{equation*}
\widehat{N}_{0,Q} (\boldsymbol{\theta}) = \sum_{j=1}^{p(\boldsymbol{\theta})} \mathcal{P}_j (\boldsymbol{\theta})^* \widehat{N}_Q (\boldsymbol{\theta}) \mathcal{P}_j (\boldsymbol{\theta}), \qquad \widehat{N}_{*,Q} (\boldsymbol{\theta}) = \sum_{\substack{1 \le l,j \le p(\boldsymbol{\theta})\\ j \ne l}} \mathcal{P}_j (\boldsymbol{\theta})^* \widehat{N}_Q (\boldsymbol{\theta}) \mathcal{P}_l (\boldsymbol{\theta}).
\end{equation*}

\subsection{Коэффициенты $\nu_l(\boldsymbol{\theta})$, $l=1, \ldots,n$}
Согласно~(\ref{abstr_N_eigenvalues}), числа $\mu_l (\boldsymbol{\theta})$ и элементы $\omega_l (\boldsymbol{\theta})$, $l = i(q,\boldsymbol{\theta}),\ldots, i(q,\boldsymbol{\theta})+k_q(\boldsymbol{\theta})-1$, где $i(q,\boldsymbol{\theta}) = k_1(\boldsymbol{\theta})+\ldots+k_{q-1}(\boldsymbol{\theta})+1$,  являются собственными для оператора $P_q (\boldsymbol{\theta}) N(\boldsymbol{\theta}) |_{\mathfrak{N}_q(\boldsymbol{\theta})}$. Тогда, в силу~(\ref{abstr_hatN_Q_gener_spec_problem}),
\begin{equation}
\label{hatN_Q_gener_spec_problem}
\widehat{P}_{f \mathfrak{N}_q(\boldsymbol{\theta})} (\boldsymbol{\theta}) \widehat{N}_Q (\boldsymbol{\theta}) \zeta_l (\boldsymbol{\theta}) = \mu_l (\boldsymbol{\theta}) \widehat{P}_{f \mathfrak{N}_q(\boldsymbol{\theta})} \overline{Q} \zeta_l (\boldsymbol{\theta}), \qquad l = i(q,\boldsymbol{\theta}),\ldots, i(q,\boldsymbol{\theta})+k_q(\boldsymbol{\theta})-1,
\end{equation} 
где $\widehat{P}_{f \mathfrak{N}_q(\boldsymbol{\theta})}(\boldsymbol{\theta})$~---  ортопроектор на $f \mathfrak{N}_q (\boldsymbol{\theta})$.

Количество $p'(q, \boldsymbol{\theta})$ различных собственных чисел $\mu^{\circ}_{1,q} (\boldsymbol{\theta}), \ldots, \mu^{\circ}_{p'(q, \boldsymbol{\theta}),q} (\boldsymbol{\theta})$ оператора $P_q (\boldsymbol{\theta}) N(\boldsymbol{\theta}) |_{\mathfrak{N}_q(\boldsymbol{\theta})}$ 
и их кратности $k_{1,q} (\boldsymbol{\theta}), \ldots, k_{p'(\boldsymbol{\theta}),q} (\boldsymbol{\theta})$ зависят от параметра $\boldsymbol{\theta} \in \mathbb{S}^{d-1}$.
При каждом фиксированном $\boldsymbol{\theta}$ через $\mathfrak{N}_{q',q} (\boldsymbol{\theta})$ обозначим собственное подпространство, отвечающее собственному значению $\mu^{\circ}_{q',q} (\boldsymbol{\theta})$. Тогда $f \mathfrak{N}_{q',q} (\boldsymbol{\theta})$~--- собственное подпространство задачи~(\ref{hatN_Q_gener_spec_problem}), отвечающее тому же значению $\mu^{\circ}_{q',q} (\boldsymbol{\theta})$. 

Наконец, согласно~(\ref{abstr_scrNhat_M^(q)_gener_spec_problem}), числа $\nu_l (\boldsymbol{\theta})$ и элементы $\zeta_l (\boldsymbol{\theta})$, $l = i'(q',q,\boldsymbol{\theta}),\ldots, i'(q',q,\boldsymbol{\theta})+k_{q',q}(\boldsymbol{\theta})-1$, где $i'(q',q,\boldsymbol{\theta}) = i(q, \boldsymbol{\theta}) +k_{1,q}(\boldsymbol{\theta})+\ldots+k_{q'-1,q}(\boldsymbol{\theta})$, являются собственными значениями и собственными элементами следующей обобщённой спектральной задачи:
\begin{equation*}
\widehat{\mathcal{N}}_Q^{(q',q)}(\boldsymbol{\theta}) \zeta_l(\boldsymbol{\theta}) = \nu_l(\boldsymbol{\theta}) \widehat{P}_{f \mathfrak{N}_{q',q} (\boldsymbol{\theta})} \overline{Q} \zeta_l(\boldsymbol{\theta}), \qquad l = i'(q',q,\boldsymbol{\theta}),\ldots, i'(q',q,\boldsymbol{\theta})+k_q(\boldsymbol{\theta})-1,
\end{equation*}
где 
\begin{multline*}
\widehat{\mathcal{N}}_Q^{(q',q)}(\boldsymbol{\theta}) \coloneqq \\ \widehat{P}_{f \mathfrak{N}_{q',q} (\boldsymbol{\theta})}(\boldsymbol{\theta}) \left. \left(
\widehat{N}_{1,Q}^0(\boldsymbol{\theta}) - \frac{1}{2} \widehat{Z}_Q^*(\boldsymbol{\theta}) Q \widehat{Z}_Q(\boldsymbol{\theta}) (f f^*) \widehat{S}(\boldsymbol{\theta}) \widehat{P} - \frac{1}{2} \widehat{S}(\boldsymbol{\theta}) (ff^*) \widehat{Z}_Q(\boldsymbol{\theta})^* Q \widehat{Z}_Q(\boldsymbol{\theta}) \right)\right|_{f \mathfrak{N}_{q',q} (\boldsymbol{\theta})} + \\ +
\sum_{\substack{j\in\{1,\ldots,p(\boldsymbol{\theta})\} \\ j \ne q}} \bigl(\gamma^{\circ}_q(\boldsymbol{\theta}) - \gamma^{\circ}_j(\boldsymbol{\theta})\bigr)^{-1} \widehat{P}_{f \mathfrak{N}_{q',q}(\boldsymbol{\theta})} \widehat{N}_Q(\boldsymbol{\theta}) \widehat{P}_{f \mathfrak{N}_j(\boldsymbol{\theta})}(\boldsymbol{\theta}) (f f^*) \widehat{P}_{f \mathfrak{N}_j(\boldsymbol{\theta})}(\boldsymbol{\theta}) \widehat{N}_Q(\boldsymbol{\theta})|_{f \mathfrak{N}_{q',q}(\boldsymbol{\theta})}.
\end{multline*}

Отметим, что в случае, когда $\widehat{N}_{0,Q}(\boldsymbol{\theta}) = 0$ имеет место $f \mathfrak{N}_{1,q} (\boldsymbol{\theta}) = f \mathfrak{N}_{q} (\boldsymbol{\theta})$, $q=1, \ldots,p(\boldsymbol{\theta})$. Тогда вместо $\widehat{\mathcal{N}}_Q^{(1,q)}(\boldsymbol{\theta})$ мы будем писать $\widehat{\mathcal{N}}_Q^{(q)}(\boldsymbol{\theta})$. 

\section{Аппроксимация окаймлённого оператора $e^{-i \tau \varepsilon^{-2}  \mathcal{A}(\mathbf{k})}$}

\subsection{Общий случай}
Обозначим 
\begin{equation}
\label{J(k,eps)}
J (\mathbf{k}, \varepsilon; \tau) \coloneqq f e^{-i \tau \varepsilon^{-2} \mathcal{A} (\mathbf{k})} f^{-1} - f_0 e^{-i \tau \varepsilon^{-2} \mathcal{A}^0 (\mathbf{k})} f_0^{-1}.
\end{equation}

Мы применим к оператору $A(t; \boldsymbol{\theta}) = \mathcal{A}(\mathbf{k})$ теоремы из п.~\ref{abstr_sndw_exp_section}. При этом мы можем отследить зависимость постоянных в оценках от исходных данных. Отметим что $c_*$, $\delta$ и $t^0$ не зависят от $\boldsymbol{\theta}$ (см.~(\ref{c_*}), (\ref{delta_fixation}), (\ref{t0_fixation})). Согласно~(\ref{X_1_estimate}) норму $\| X_1 (\boldsymbol{\theta}) \|$ можно заменить на $\alpha_1^{1/2} \| g \|_{L_{\infty}}^{1/2} \| f \|_{L_{\infty}}$. Поэтому постоянные в теоремах~\ref{abstr_sndw_exp_general_thrm} и~\ref{abstr_sndw_exp_enchcd_thrm_1} (применённых к оператору $\mathcal{A}(\mathbf{k})$) не будут зависеть от $\boldsymbol{\theta}$. Они будут зависеть только от следующих величин: $\alpha_0$, $\alpha_1$, $\|g\|_{L_\infty}$, $\|g^{-1}\|_{L_\infty}$, $\|f\|_{L_{\infty}}$, $\|f^{-1}\|_{L_{\infty}}$ и $r_0$.

Применяя теорему~\ref{abstr_sndw_exp_general_thrm} с учётом~(\ref{R_P})--(\ref{R(k,eps)(I-P)_est}), (\ref{A^0(k)P})  получаем следующий результат, ранее доказанный в~\cite[теорема~8.1]{BSu2008}
\begin{thrm}[\cite{BSu2008}]
	\label{sndw_A(k)_exp_general_thrm}
	При $\tau \in \mathbb{R}$, $\varepsilon > 0$ и $\mathbf{k} \in \widetilde{\Omega}$ выполнена оценка
	\begin{equation*}
	\| J (\mathbf{k}, \varepsilon; \tau) \mathcal{R}(\mathbf{k}, \varepsilon)^{3/2}\|_{L_2(\Omega) \to L_2 (\Omega) }  \le \mathcal{C}_1 (1 + |\tau|) \varepsilon,
	\end{equation*}
	где константа $\mathcal{C}_1$ зависит только от $\alpha_0$, $\alpha_1$, $\|g\|_{L_\infty}$, $\|g^{-1}\|_{L_\infty}$, $\|f\|_{L_{\infty}}$, $\|f^{-1}\|_{L_{\infty}}$ и $r_0$.
\end{thrm}

\subsection{Случай, когда $\widehat{N}_Q(\boldsymbol{\theta}) = 0$}
Применим теорему~\ref{abstr_sndw_exp_enchcd_thrm_1}, предполагая, что $\widehat{N}_Q(\boldsymbol{\theta}) = 0$ при всех $\boldsymbol{\theta} \in \mathbb{S}^{d-1}$. С учётом~(\ref{R_P})--(\ref{R(k,eps)(I-P)_est}), (\ref{A^0(k)P}) это влечёт следующий результат.
\begin{thrm}
	\label{sndw_A(k)_exp_enchcd_thrm_1}
	Пусть оператор $\widehat{N}_Q(\boldsymbol{\theta})$ определён в~\emph{(\ref{N_Q(theta)})}. Пусть $\widehat{N}_Q(\boldsymbol{\theta}) = 0$ при всех $\boldsymbol{\theta} \in \mathbb{S}^{d-1}$. Тогда при $\tau \in \mathbb{R}$, $\varepsilon > 0$ и $\mathbf{k} \in \widetilde{\Omega}$ выполнена оценка
	\begin{equation*}
	\| J (\mathbf{k}, \varepsilon; \tau) \mathcal{R}(\mathbf{k}, \varepsilon)\|_{L_2(\Omega) \to L_2 (\Omega)}  \le \mathcal{C}_2 (1+ |\tau|^{1/2}) \varepsilon,
	\end{equation*}
	где константа $\mathcal{C}_2$ зависит только от $\alpha_0$, $\alpha_1$, $\|g\|_{L_\infty}$, $\|g^{-1}\|_{L_\infty}$, $\|f\|_{L_{\infty}}$, $\|f^{-1}\|_{L_{\infty}}$ и $r_0$.
\end{thrm}

\subsection{Случай, когда $\widehat{N}_{0,Q}(\boldsymbol{\theta}) = 0$} Теперь мы отказываемся от предположения теоремы~\ref{sndw_A(k)_exp_enchcd_thrm_1}, но взамен предположим, что $\widehat{N}_{0,Q}(\boldsymbol{\theta}) = 0$ при всех $\boldsymbol{\theta}$. Как и в п.~\ref{ench_approx2_section}, для того, чтобы применить теорему~\ref{abstr_sndw_exp_enchcd_thrm_2}, приходится накладывать дополнительные условия. Используем исходную нумерацию собственных значений $\gamma_1 (\boldsymbol{\theta}) \le \ldots \le \gamma_n (\boldsymbol{\theta})$ ростка $S (\boldsymbol{\theta})$. Они также являются собственными значениями обобщённой спектральной задачи~(\ref{hatS_gener_spec_problem}). При каждом $\boldsymbol{\theta}$ через $\mathcal{P}^{(k)} (\boldsymbol{\theta})$ обозначим \textquotedblleft косой\textquotedblright \ (ортогональный с весом $\overline{Q}$) проектор пространства $L_2 (\Omega; \mathbb{C}^n)$ на собственное подпространство задачи~(\ref{hatS_gener_spec_problem}), отвечающее собственному значению $\gamma_k (\boldsymbol{\theta})$. Ясно, что при каждом $\boldsymbol{\theta}$ оператор $\mathcal{P}^{(k)} (\boldsymbol{\theta})$ совпадает с одним из проекторов $\mathcal{P}_j (\boldsymbol{\theta})$, введённых в п.~\ref{sndw_eigenval_multipl_section} (но номер $j$ может зависеть от $\boldsymbol{\theta}$).

\begin{condition}
	\label{sndw_cond1}
	\begin{enumerate*}[label=\emph{\arabic*$^{\circ}.$}, ref=\arabic*$^{\circ}$]
		\item  $\widehat{N}_{0,Q}(\boldsymbol{\theta})=0$ при всех $\boldsymbol{\theta} \in \mathbb{S}^{d-1}$.
		\item \label{sndw_cond1_it2} Для каждой пары индексов $(k,r)$, $1 \le k,r \le n$, $k \ne r$, такой, что $\gamma_k (\boldsymbol{\theta}_0) = \gamma_r (\boldsymbol{\theta}_0) $ при некотором $\boldsymbol{\theta}_0 \in \mathbb{S}^{d-1}$, выполнено $(\mathcal{P}^{(k)} (\boldsymbol{\theta}))^* \widehat{N}_Q (\boldsymbol{\theta}) \mathcal{P}^{(r)} (\boldsymbol{\theta}) = 0$ при всех $\boldsymbol{\theta} \in \mathbb{S}^{d-1}$.    
	\end{enumerate*}
\end{condition}

Условие~\ref{sndw_cond1_it2} может быть переформулировано: мы требуем, чтобы для ненулевых (тождественно) \textquotedblleft блоков\textquotedblright \ $(\mathcal{P}^{(k)} (\boldsymbol{\theta}))^* \widehat{N}_Q (\boldsymbol{\theta}) \mathcal{P}^{(r)} (\boldsymbol{\theta})$ оператора $\widehat{N}_Q (\boldsymbol{\theta})$ соответствующие ветви собственных значений $\gamma_k (\boldsymbol{\theta})$ и  $\gamma_r (\boldsymbol{\theta})$ не пересекались.

Разумеется, выполнение условия~\ref{sndw_cond1} гарантируется следующим более сильным условием.

\begin{condition}
	\label{sndw_cond2}
	\begin{enumerate*}[label=\emph{\arabic*$^{\circ}.$}, ref=\arabic*$^{\circ}$]
		\item $\widehat{N}_{0,Q}(\boldsymbol{\theta})=0$ при всех $\boldsymbol{\theta} \in \mathbb{S}^{d-1}$.
		\item \label{sndw_cond2_it2} Предположим, что количество $p$ различных собственных значений обобщённой спектральной задачи~\emph{(\ref{hatS_gener_spec_problem})} не зависит от $\boldsymbol{\theta} \in \mathbb{S}^{d-1}$.       
	\end{enumerate*}
\end{condition}

При условии~\ref{sndw_cond2} обозначим различные собственные значения ростка, занумерованные в порядке возрастания, через $\gamma^{\circ}_1(\boldsymbol{\theta}), \ldots, \gamma^{\circ}_p(\boldsymbol{\theta})$. Тогда из их кратности $k_1, \ldots, k_p$ не зависят от $\boldsymbol{\theta} \in \mathbb{S}^{d-1}$.  

\begin{remark}
	\begin{enumerate*}[label=\emph{\arabic*$^{\circ}.$}, ref=\arabic*$^{\circ}$]
		\item Предположение пункта~\emph{\ref{sndw_cond2_it2}} условия~\emph{\ref{sndw_cond2}} заведомо выполнено, если спектр задачи~\emph{(\ref{hatS_gener_spec_problem})} простой при всех $\boldsymbol{\theta} \in \mathbb{S}^{d-1}$.
		\item Из следствия~\emph{\ref{sndw_real_spec_simple_coroll}} вытекает, что условие~\emph{\ref{sndw_cond2}} выполнено, если $b (\boldsymbol{\theta})$, $g (\mathbf{x})$ и $Q (\mathbf{x})$~--- матрицы с вещественными элементами и спектр задачи~\emph{(\ref{hatS_gener_spec_problem})} простой при всех $\boldsymbol{\theta} \in \mathbb{S}^{d-1}$.
	\end{enumerate*}
\end{remark}

Итак, предположим выполненным условие~\ref{sndw_cond1} и введём обозначение 
\begin{equation*}
\mathcal{K} \coloneqq \{ (k,r) \colon 1 \le k,r \le n, \; k \ne r, \;  (\mathcal{P}^{(k)} (\boldsymbol{\theta}))^* \widehat{N}_Q (\boldsymbol{\theta}) \mathcal{P}^{(r)} (\boldsymbol{\theta}) \not\equiv 0 \}.
\end{equation*}
Обозначим $c^{\circ}_{kr} (\boldsymbol{\theta}) \coloneqq \min \{c_*, n^{-1} |\gamma_k (\boldsymbol{\theta}) - \gamma_r (\boldsymbol{\theta})| \}$, $(k,r) \in \mathcal{K}$.

Поскольку оператор $S (\boldsymbol{\theta})$ непрерывно зависит от $\boldsymbol{\theta} \in \mathbb{S}^{d-1}$, то из теории возмущений дискретного спектра следует, что $\gamma_j (\boldsymbol{\theta})$~--- непрерывные функции на $\mathbb{S}^{d-1}$. В силу условия~\ref{sndw_cond1}(\ref{sndw_cond1_it2}) при $(k,r) \in \mathcal{K}$ выполнено~$|\gamma_k (\boldsymbol{\theta}) - \gamma_r (\boldsymbol{\theta})| > 0$ при всех $\boldsymbol{\theta} \in \mathbb{S}^{d-1}$, а тогда $c^{\circ}_{kr} \coloneqq \min_{\boldsymbol{\theta} \in \mathbb{S}^{d-1}} c^{\circ}_{kr} (\boldsymbol{\theta}) > 0$, $(k,r) \in \mathcal{K}$. Положим
\begin{equation}
\label{c^circ}
c^{\circ} \coloneqq \min_{(k,r) \in \mathcal{K}} c^{\circ}_{kr}.
\end{equation}
Ясно, что число~(\ref{c^circ})~--- это реализация величины~(\ref{abstr_c^circ}), выбранная не зависящей от $\boldsymbol{\theta}$.
Число, подчинённое~(\ref{abstr_t00}), при условии~\ref{sndw_cond1} также можно выбрать не зависящим от $\boldsymbol{\theta} \in \mathbb{S}^{d-1}$. С учётом~(\ref{delta_fixation}) и~(\ref{X_1_estimate}) положим
\begin{equation*}
t^{00} = (8 \beta_2)^{-1} r_0 \alpha_1^{-3/2} \alpha_0^{1/2} \| g\|_{L_{\infty}}^{-3/2} \| g^{-1}\|_{L_{\infty}}^{-1/2} \|f\|_{L_\infty}^{-3} \|f^{-1}\|_{L_\infty}^{-1} c^{\circ}.
\end{equation*}
(Условие $t^{00} \le t^{0}$ выполнено автоматически, поскольку $c^{\circ} \le \| S (\boldsymbol{\theta}) \| \le \alpha_1 \|g\|_{L_{\infty}}\|f\|_{L_\infty}^2$.)

Предполагая выполненным условие~\ref{sndw_cond1}, применим теорему~\ref{abstr_sndw_exp_enchcd_thrm_2}.
\begin{thrm}
	\label{sndw_A(k)_exp_enchcd_thrm_2}
	Пусть выполнено условие~\emph{\ref{sndw_cond1}} \emph{(}или более сильное условие~\emph{\ref{sndw_cond2}}\emph{)}. Тогда при $\tau \in \mathbb{R}$, $\varepsilon > 0$ и $\mathbf{k} \in \widetilde{\Omega}$ справедлива оценка
	\begin{equation*}
	\| J (\mathbf{k}, \varepsilon; \tau) \mathcal{R}(\mathbf{k}, \varepsilon)\|_{L_2(\Omega) \to L_2 (\Omega)}  \le \mathcal{C}_3 (1+ |\tau|^{1/2}) \varepsilon,
	\end{equation*}
	где константа $\mathcal{C}_3$ зависит от $\alpha_0$, $\alpha_1$, $\|g\|_{L_\infty}$, $\|g^{-1}\|_{L_\infty}$, $\|f\|_{L_{\infty}}$, $\|f^{-1}\|_{L_{\infty}}$, $r_0$, а также от $n$ и $c^{\circ}$.
\end{thrm}

\subsection{Подтверждение точности относительно сглаживания}
Применение теорем~\ref{abstr_sndw_exp_smooth_shrp_thrm_1}, \ref{abstr_sndw_exp_smooth_shrp_thrm_2} позволяет подтвердить точность теорем~\ref{sndw_A(k)_exp_general_thrm}, \ref{sndw_A(k)_exp_enchcd_thrm_1}, \ref{sndw_A(k)_exp_enchcd_thrm_2} в отношении сглаживания.
\begin{thrm}[\cite{Su2017}]
	\label{sndw_A(k)_exp_smooth_shrp_thrm_1}
	Пусть $\widehat{N}_{0,Q} (\boldsymbol{\theta}_0) \ne 0$ при некотором $\boldsymbol{\theta}_0 \in \mathbb{S}^{d-1}$. Пусть $\tau \ne 0$ и $0 \le s < 3$. Тогда не существует такой константы $\mathcal{C}(\tau) >0$, чтобы оценка
	\begin{equation*}
	\bigl\| \bigl( f e^{-i \tau \varepsilon^{-2} \mathcal{A} (\mathbf{k})} f^{-1} - f_0 e^{-i \tau \varepsilon^{-2} \mathcal{A}^0 (\mathbf{k})} f_0^{-1} \bigr) \mathcal{R} (\mathbf{k}, \varepsilon)^{s/2} \bigr\|_{L_2(\Omega) \to L_2 (\Omega)} \le \mathcal{C}(\tau) \varepsilon
	\end{equation*}
	выполнялась при почти всех $\mathbf{k} = t \boldsymbol{\theta} \in \widetilde{\Omega}$ и достаточно малых $\varepsilon > 0$.
\end{thrm}

\begin{thrm}
	\label{sndw_A(k)_exp_smooth_shrp_thrm_2}
	Пусть $\widehat{N}_{0,Q} (\boldsymbol{\theta}) = 0$ при всех $\boldsymbol{\theta} \in \mathbb{S}^{d-1}$ и пусть $\widehat{\mathcal{N}}_Q^{(q)} (\boldsymbol{\theta}_0) \ne 0$  при некоторых $q \in \{1,\ldots,p(\boldsymbol{\theta}_0)\}$ и $\boldsymbol{\theta}_0 \in \mathbb{S}^{d-1}$.
	Пусть $\tau \ne 0$ и $0 \le s < 2$. Тогда не существует такой константы $\mathcal{C}(\tau) >0$, чтобы оценка
	\begin{equation*}
	\bigl\| \bigl( f e^{-i \tau \varepsilon^{-2} \mathcal{A} (\mathbf{k})} f^{-1} - f_0 e^{-i \tau \varepsilon^{-2} \mathcal{A}^0 (\mathbf{k})} f_0^{-1} \bigr) \mathcal{R} (\mathbf{k}, \varepsilon)^{s/2} \bigr\|_{L_2(\Omega) \to L_2 (\Omega)} \le \mathcal{C}(\tau) \varepsilon
	\end{equation*}
	выполнялась при почти всех $\mathbf{k} = t \boldsymbol{\theta} \in \widetilde{\Omega}$ и достаточно малых $\varepsilon > 0$.
\end{thrm}
Теорема~\ref{sndw_A(k)_exp_smooth_shrp_thrm_1} была доказана в~\cite[теорема~11.7]{Su2017}.

\subsection{Подтверждение точности относительно времени}
Применение теоремы~\ref{abstr_sndw_exp_time_shrp_thrm_1} позволяет подтвердить точность теоремы~\ref{sndw_A(k)_exp_general_thrm} в отношении зависимости оценки от времени.
\begin{thrm}
	\label{sndw_A(k)_exp_time_shrp_thrm_1}
	Пусть $\widehat{N}_{0,Q} (\boldsymbol{\theta}_0) \ne 0$ при некотором $\boldsymbol{\theta}_0 \in \mathbb{S}^{d-1}$. Тогда не существует положительной функции $\mathcal{C}(\tau)$ такой, что $\lim_{\tau \to \infty} \mathcal{C}(\tau)/ |\tau| = 0$ и выполнена оценка
	\begin{equation}
	\label{sndw_A(k)_exp_time_shrp_est_1}
	\bigl\| \bigl( f e^{-i \tau \varepsilon^{-2} \mathcal{A} (\mathbf{k})} f^{-1} - f_0 e^{-i \tau \varepsilon^{-2} \mathcal{A}^0 (\mathbf{k})} f_0^{-1}\bigr) \mathcal{R} (\mathbf{k}, \varepsilon)^{3/2} \bigr\|_{L_2(\Omega) \to L_2 (\Omega)} \le \mathcal{C}(\tau) \varepsilon
	\end{equation}
	при всех $\tau \in \mathbb{R}$, почти всех $\mathbf{k} = t \boldsymbol{\theta} \in \widetilde{\Omega}$ и достаточно малых $\varepsilon > 0$.
\end{thrm}

\begin{proof}[Доказательство\nopunct] проведём от противного.  Предположим, что найдётся функция $\mathcal{C}(\tau) > 0$ такая, что $\lim_{\tau \to \infty} \mathcal{C}(\tau)/ |\tau| = 0$ и выполнена оценка~(\ref{sndw_A(k)_exp_time_shrp_est_1}) при почти всех $\mathbf{k} \in \widetilde{\Omega}$ и достаточно малом $\varepsilon > 0$. Тогда, с учётом~(\ref{R_P}), (\ref{R(k,eps)(I-P)_est}), отсюда следует, что найдётся функция $\widetilde{\mathcal{C}}(\tau) > 0$ такая, что $\lim_{\tau \to \infty} \widetilde{\mathcal{C}}(\tau)/ |\tau| = 0$ и выполнена оценка
\begin{equation}
\label{sndw_A(k)_exp_shrp_f1}
\bigl\| \bigl( f e^{-i \tau \varepsilon^{-2} \mathcal{A} (\mathbf{k})} f^{-1} - f_0 e^{-i \tau \varepsilon^{-2} \mathcal{A}^0 (\mathbf{k})} f_0^{-1}\bigr) \widehat{P} \bigr\|_{L_2(\Omega) \to L_2(\Omega)} \varepsilon^3 (|\mathbf{k}|^2 + \varepsilon^2)^{-3/2}  \le \widetilde{\mathcal{C}}(\tau) \varepsilon
\end{equation}

В силу~(\ref{abstr_P_Phat}) справедливо тождество $f^{-1} \widehat{P} = P f^* \overline{Q}$, где $P$~--- ортогональный проектор пространства $L_2 (\Omega; \mathbb{C}^n)$ на подпространство~$\mathfrak{N}$ (см.~(\ref{frakN})). Тогда оператор под знаком нормы в~(\ref{sndw_A(k)_exp_shrp_f1}) можно записать в виде $f e^{-i \tau \varepsilon^{-2} \mathcal{A} (\mathbf{k})} P f^* \overline{Q} - f_0 e^{-i \tau \varepsilon^{-2} \mathcal{A}^0 (\mathbf{k})} f_0^{-1} \widehat{P}$.
	
Затем, воспользуемся оценкой
\begin{equation}
\label{sndw_A(k)_exp_shrp_f2}
\bigl\| F(\mathbf{k}) - P \bigr\|_{L_2 (\Omega) \to L_2 (\Omega)} \le C_1 |\mathbf{k}|, \qquad |\mathbf{k}| \le t^0,
\end{equation}
(см.~(\ref{abstr_F(t)_threshold})). Отсюда следует, что найдётся функция $\check{\mathcal{C}}(\tau) > 0$ такая, что $\lim_{\tau \to \infty} \check{\mathcal{C}}(\tau)/ |\tau| = 0$ и выполнена оценка
\begin{equation}
\label{sndw_A(k)_exp_shrp_f3}
\bigl\|  f e^{-i \tau \varepsilon^{-2} \mathcal{A} (\mathbf{k})} F(\mathbf{k}) f^* \overline{Q} - f_0 e^{-i \tau \varepsilon^{-2} \mathcal{A}^0 (\mathbf{k})} f_0^{-1} \widehat{P} \bigr\|_{L_2(\Omega) \to L_2 (\Omega) } \varepsilon^3 (|\mathbf{k}|^2 + \varepsilon^2)^{-3/2}  \le \check{\mathcal{C}}(\tau) \varepsilon
\end{equation}
при почти всех $\mathbf{k} \in \widetilde{\Omega}$ в шаре $|\mathbf{k}| \le t^0$ и достаточно малых $\varepsilon > 0$.
Оператор, стоящий под знаком нормы в~(\ref{sndw_A(k)_exp_shrp_f3}) непрерывен по $\mathbf{k}$ в шаре $|\mathbf{k}| \le t^0$ при фиксированных $\tau$ и $\varepsilon$ (см.~\cite[лемма~11.8]{Su2017}). Следовательно, оценка~(\ref{sndw_A(k)_exp_shrp_f3}) справедлива при всех значениях $\mathbf{k}$ из данного шара. В частности, она верна в точке $\mathbf{k} = t\boldsymbol{\theta}_0$, если $t \le t^0$. Применяя снова неравенство~(\ref{sndw_A(k)_exp_shrp_f2}) и равенство $P f^* \overline{Q} = f^{-1} \widehat{P}$, получаем, что справедливо неравенство
\begin{equation}
\label{sndw_A(k)_exp_shrp_f4}
\bigl\| \bigl( f e^{-i \tau \varepsilon^{-2} \mathcal{A} (t\boldsymbol{\theta}_0)} f^{-1} - f_0 e^{-i \tau \varepsilon^{-2} \mathcal{A}^0 (t\boldsymbol{\theta}_0)} f_0^{-1}\bigr) \widehat{P} \bigr\|_{L_2(\Omega) \to L_2(\Omega)} \varepsilon^3 (t^2 + \varepsilon^2)^{-3/2}  \le \check{\mathcal{C}}'(\tau) \varepsilon
\end{equation}
c функцией $\check{\mathcal{C}}'(\tau) > 0$ такой, что $\lim_{\tau \to \infty} \check{\mathcal{C}}'(\tau)/ |\tau| = 0$,  при всех $t \le t^{0}$ и достаточно малых $\varepsilon > 0$.
	
Оценка~(\ref{sndw_A(k)_exp_shrp_f4}) в абстрактных терминах соответствует оценке~(\ref{abstr_sndw_exp_time_shrp_est_1}). Поскольку по условию выполнено $\widehat{N}_{0,Q}(\boldsymbol{\theta}_0) \ne 0$, то применение теоремы~\ref{abstr_sndw_exp_time_shrp_thrm_1} приводит нас к противоречию.
\end{proof}
Аналогично, применение теоремы~\ref{abstr_sndw_exp_time_shrp_thrm_2} позволяет подтвердить точность теорем~\ref{sndw_A(k)_exp_enchcd_thrm_1}, \ref{sndw_A(k)_exp_enchcd_thrm_2}.
\begin{thrm}
	\label{sndw_A(k)_exp_time_shrp_thrm_2}
	Пусть $\widehat{N}_{0,Q} (\boldsymbol{\theta}) = 0$ при всех $\boldsymbol{\theta} \in \mathbb{S}^{d-1}$ и пусть $\widehat{\mathcal{N}}_Q^{\,(q)} (\boldsymbol{\theta}_0) \ne 0$  при некотором $q \in \{1,\ldots,p(\boldsymbol{\theta}_0)\}$ и $\boldsymbol{\theta}_0 \in \mathbb{S}^{d-1}$. Тогда не существует положительной функции $\mathcal{C}(\tau)$ такой, что $\lim_{\tau \to \infty} \mathcal{C}(\tau)/ |\tau|^{1/2} = 0$ и выполнена оценка
	\begin{equation*}
	\bigl\| \bigl( f e^{-i \tau \varepsilon^{-2} \mathcal{A} (\mathbf{k})} f^{-1} - f_0 e^{-i \tau \varepsilon^{-2} \mathcal{A}^0 (\mathbf{k})} f_0^{-1} \bigr) \mathcal{R} (\mathbf{k}, \varepsilon) \bigr\|_{L_2(\Omega) \to L_2 (\Omega)} \le \mathcal{C}(\tau) \varepsilon
	\end{equation*}
	при всех $\tau \in \mathbb{R}$, почти всех $\mathbf{k} = t \boldsymbol{\theta} \in \widetilde{\Omega}$ и достаточно малых $\varepsilon > 0$.
\end{thrm}

\section{Аппроксимация  операторной экспоненты $e^{-i \tau \varepsilon^{-2} {\mathcal{A}}}$}

\subsection{Аппроксимация  оператора $e^{-i\tau \varepsilon^{-2} \widehat{\mathcal{A}}}$}
В $L_2 (\mathbb{R}^d; \mathbb{C}^n)$ рассмотрим оператор~(\ref{hatA}). Пусть $\widehat{\mathcal{A}}^0$~--- эффективный оператор~(\ref{hatA0}). Обозначим $\widehat{J}(\varepsilon; \tau) \coloneqq e^{-i\tau \varepsilon^{-2} \widehat{\mathcal{A}}} - e^{-i\tau \varepsilon^{-2} \widehat{\mathcal{A}}^0}$. Напомним обозначение $\mathcal{H}_0 = - \Delta$ и положим
\begin{equation}
\label{R(epsilon)}
\mathcal{R} (\varepsilon) \coloneqq \varepsilon^2 (\mathcal{H}_0 + \varepsilon^2 I)^{-1}.
\end{equation}
Оператор $\mathcal{R} (\varepsilon)$ раскладывается в прямой интеграл по операторам~(\ref{R(k,eps)}):
\begin{equation*}
\mathcal{R} (\varepsilon) = \mathscr{U}^{-1} \left( \int_{\widetilde{\Omega}} \oplus  \mathcal{R} (\mathbf{k}, \varepsilon) \, d \mathbf{k}  \right) \mathscr{U}.
\end{equation*}

Напомним также обозначение (\ref{Jhat(k,eps)}). Из разложений вида~(\ref{Gelfand_A_decompose}) для $\widehat{\mathcal{A}}$ и $\widehat{\mathcal{A}}^0$ следует равенство
\begin{equation}
\label{hatA_exps_Gelfand}
\| \widehat{J}(\varepsilon; \tau) \mathcal{R}(\varepsilon)^{s/2} \|_{L_2(\mathbb{R}^d) \to L_2(\mathbb{R}^d)} = \underset{\mathbf{k} \in \widetilde{\Omega}}{\esssup} \| \widehat{J}(\mathbf{k}, \varepsilon; \tau) \mathcal{R}(\mathbf{k}, \varepsilon)^{s/2} \|_{L_2(\Omega) \to L_2(\Omega)}.
\end{equation}
Поэтому из теорем~\ref{hatA(k)_exp_general_thrm}, \ref{hatA(k)_exp_enchcd_thrm_1}, \ref{hatA(k)_exp_enchcd_thrm_2} прямо вытекают следующие утверждения.
\begin{thrm}[\cite{BSu2008}]
	\label{hatA_exp_general_thrm}
	Для $\tau \in \mathbb{R}$ и $\varepsilon > 0$ справедлива оценка
	\begin{equation*}
	\bigl\| \widehat{J}(\varepsilon; \tau) \mathcal{R}(\varepsilon)^{3/2} \bigr\|_{L_2(\mathbb{R}^d) \to L_2(\mathbb{R}^d)} \le \widehat{\mathcal{C}}_1(1 + |\tau|) \varepsilon. 
	\end{equation*}
	Константа $\widehat{\mathcal{C}}_1$ зависит только от $\alpha_0$, $\alpha_1$, $\|g\|_{L_\infty}$, $\|g^{-1}\|_{L_\infty}$ и $r_0$.
\end{thrm}
\begin{thrm}
	\label{hatA_exp_enchcd_thrm_1}
	Пусть оператор $\widehat{N}(\boldsymbol{\theta})$ определён в~\emph{(\ref{hatN(theta)})}. Пусть $\widehat{N}(\boldsymbol{\theta})=0$ при всех $\boldsymbol{\theta} \in \mathbb{S}^{d-1}$. Тогда для $\tau \in \mathbb{R}$ и $\varepsilon > 0$ справедлива оценка
	\begin{equation*}
	\bigl\| \widehat{J}(\varepsilon; \tau) \mathcal{R}(\varepsilon) \bigr\|_{L_2(\mathbb{R}^d) \to L_2(\mathbb{R}^d)} \le \widehat{\mathcal{C}}_2(1 + |\tau|^{1/2}) \varepsilon.
	\end{equation*}
	Константа $\widehat{\mathcal{C}}_2$ зависит только от $\alpha_0$, $\alpha_1$, $\|g\|_{L_\infty}$, $\|g^{-1}\|_{L_\infty}$ и $r_0$.
\end{thrm}
\begin{thrm}
	\label{hatA_exp_enchcd_thrm_2}
	Пусть выполнено условие~\emph{\ref{cond1}} \emph{(}или более сильное условие~\emph{\ref{cond2}}\emph{)}. Тогда для $\tau \in \mathbb{R}$ и $\varepsilon > 0$ справедлива оценка
	\begin{equation*}
	\bigl\| \widehat{J}(\varepsilon; \tau) \mathcal{R}(\varepsilon) \bigr\|_{L_2(\mathbb{R}^d) \to L_2(\mathbb{R}^d)} \le \widehat{\mathcal{C}}_3 (1 + |\tau|^{1/2}) \varepsilon.
	\end{equation*}
	Константа $\widehat{\mathcal{C}}_3$ зависит от $\alpha_0$, $\alpha_1$, $\|g\|_{L_\infty}$, $\|g^{-1}\|_{L_\infty}$, $r_0$, а также от $n$ и $\widehat{c}^{\circ}$.
\end{thrm}
Теорема~\ref{hatA_exp_general_thrm} была доказана в~\cite[теорема~9.1]{BSu2008}. Теоремы~\ref{hatA_exp_enchcd_thrm_1} и~\ref{hatA_exp_enchcd_thrm_2} усиливают результаты теорем~12.2 и~12.3 из~\cite{Su2017} в отношении зависимости оценок от $\tau$.

Применение теорем~\ref{hatA(k)_exp_smooth_shrp_thrm_1}, \ref{hatA(k)_exp_smooth_shrp_thrm_2} позволяет подтвердить точность теорем~\ref{hatA_exp_general_thrm}, \ref{hatA_exp_enchcd_thrm_1}, \ref{hatA_exp_enchcd_thrm_2} в отношении сглаживания.
\begin{thrm}[\cite{Su2017}]
	\label{hatA_exp_smooth_shrp_thrm_1}
	Пусть $\widehat{N}_0 (\boldsymbol{\theta}_0) \ne 0$ при некотором $\boldsymbol{\theta}_0 \in \mathbb{S}^{d-1}$. Пусть $\tau \ne 0$ и $0 \le s < 3$. Тогда не существует такой константы $\mathcal{C}(\tau) >0$, чтобы оценка
	\begin{equation*}
	\bigl\| \widehat{J}(\varepsilon; \tau) \mathcal{R}(\varepsilon)^{s/2} \bigr\|_{L_2(\mathbb{R}^d) \to L_2(\mathbb{R}^d)} \le \mathcal{C}(\tau) \varepsilon
	\end{equation*}
	выполнялась при всех достаточно малых $\varepsilon > 0$.
\end{thrm}

\begin{thrm}
	\label{hatA_exp_smooth_shrp_thrm_2}
	Пусть $\widehat{N}_0 (\boldsymbol{\theta}) = 0$ при всех $\boldsymbol{\theta} \in \mathbb{S}^{d-1}$ и пусть $\widehat{\mathcal{N}}^{(q)} (\boldsymbol{\theta}_0) \ne 0$  при некоторых $q \in \{1,\ldots,p(\boldsymbol{\theta}_0)\}$ и $\boldsymbol{\theta}_0 \in \mathbb{S}^{d-1}$. 
	Пусть $\tau \ne 0$ и $0 \le s < 2$. Тогда не существует такой константы $\mathcal{C}(\tau) >0$, чтобы оценка
	\begin{equation*}
	\bigl\| \widehat{J}(\varepsilon; \tau) \mathcal{R}(\varepsilon)^{s/2} \bigr\|_{L_2(\mathbb{R}^d) \to L_2(\mathbb{R}^d)} \le \mathcal{C}(\tau) \varepsilon
	\end{equation*}
	выполнялась при всех достаточно малых $\varepsilon > 0$.
\end{thrm}
Теорема~\ref{hatA_exp_smooth_shrp_thrm_1} была доказана в~\cite[теорема~12.4]{Su2017}.

Далее, применение теорем~\ref{hatA(k)_exp_time_shrp_thrm_1}, \ref{hatA(k)_exp_time_shrp_thrm_2} позволяет подтвердить точность теорем~\ref{hatA_exp_general_thrm}, \ref{hatA_exp_enchcd_thrm_1}, \ref{hatA_exp_enchcd_thrm_2} в отношении зависимости оценки от времени.
\begin{thrm}
	\label{hatA_exp_shrp_thrm_1}
	Пусть $\widehat{N}_0 (\boldsymbol{\theta}_0) \ne 0$ при некотором $\boldsymbol{\theta}_0 \in \mathbb{S}^{d-1}$. Тогда не существует положительной функции $\mathcal{C}(\tau)$ такой, что $\lim_{\tau \to \infty} \mathcal{C}(\tau)/ |\tau| = 0$ и выполнена оценка
	\begin{equation*}
	\bigl\| \widehat{J}(\varepsilon;\tau) \mathcal{R} (\varepsilon)^{3/2} \bigr\|_{L_2(\mathbb{R}^d) \to L_2(\mathbb{R}^d)} \le \mathcal{C}(\tau) \varepsilon
	\end{equation*}
	при всех $\tau \in \mathbb{R}$ и всех достаточно малых $\varepsilon > 0$.
\end{thrm}

\begin{thrm}
	\label{hatA_exp_shrp_thrm_2}
	Пусть $\widehat{N}_0 (\boldsymbol{\theta}) = 0$ при всех $\boldsymbol{\theta} \in \mathbb{S}^{d-1}$ и пусть $\widehat{\mathcal{N}}^{(q)} (\boldsymbol{\theta}_0) \ne 0$  при некотором $q \in \{1,\ldots,p(\boldsymbol{\theta}_0)\}$ и $\boldsymbol{\theta}_0 \in \mathbb{S}^{d-1}$. Тогда не существует положительной функции $\mathcal{C}(\tau)$ такой, что $\lim_{\tau \to \infty} \mathcal{C}(\tau)/ |\tau|^{1/2} = 0$ и выполнена оценка
	\begin{equation*}
	\bigl\| \widehat{J}(\varepsilon;\tau) \mathcal{R} (\varepsilon) \bigr\|_{L_2(\mathbb{R}^d) \to L_2(\mathbb{R}^d)} \le \mathcal{C}(\tau) \varepsilon
	\end{equation*}
	при всех $\tau \in \mathbb{R}$ и всех достаточно малых $\varepsilon > 0$.
\end{thrm}

\subsection{Аппроксимация окаймлённого оператора $e^{-i \tau \varepsilon^{-2} \mathcal{A}}$}

В $L_2(\mathbb{R}^d; \mathbb{C}^n)$ рассмотрим оператор~(\ref{A}). Пусть $f_0$~--- матрица~(\ref{f0}), а $\mathcal{A}^0$~--- оператор~(\ref{A0}). Обозначим
\begin{equation*}
J(\varepsilon; \tau) \coloneqq f e^{-i \tau \varepsilon^{-2} \mathcal{A}} f^{-1} - f_0 e^{-i \tau \varepsilon^{-2} \mathcal{A}^0 } f_0^{-1}.
\end{equation*}
Аналогично~(\ref{hatA_exps_Gelfand}) имеем
\begin{equation*}
\| J(\varepsilon; \tau) \mathcal{R}(\varepsilon)^{s/2} \|_{L_2(\mathbb{R}^d) \to L_2(\mathbb{R}^d)} = \underset{\mathbf{k} \in \widetilde{\Omega}}{\esssup} \| J(\mathbf{k}, \varepsilon; \tau) \mathcal{R}(\mathbf{k}, \varepsilon)^{s/2} \|_{L_2(\Omega) \to L_2(\Omega)}.
\end{equation*}
Здесь $J(\mathbf{k}, \varepsilon; \tau)$ определено в~(\ref{J(k,eps)}). Таким образом, из теорем~\ref{sndw_A(k)_exp_general_thrm}, \ref{sndw_A(k)_exp_enchcd_thrm_1}, \ref{sndw_A(k)_exp_enchcd_thrm_2} получаем следующие результаты.
\begin{thrm}[\cite{BSu2008}]
	\label{sndw_A_exp_general_thrm}
	Для $\tau \in \mathbb{R}$ и $\varepsilon > 0$ справедлива оценка
	\begin{equation*}
	\bigl\| J(\varepsilon; \tau) \mathcal{R}(\varepsilon)^{3/2} \bigr\|_{L_2(\mathbb{R}^d) \to L_2(\mathbb{R}^d)} \le \mathcal{C}_1(1 + |\tau|) \varepsilon. 
	\end{equation*}
	Константа $\mathcal{C}_1$ зависит только от $\alpha_0$, $\alpha_1$, $\|g\|_{L_\infty}$, $\|g^{-1}\|_{L_\infty}$, $\|f\|_{L_{\infty}}$, $\|f^{-1}\|_{L_{\infty}}$ и $r_0$.
\end{thrm}
\begin{thrm}
	\label{sndw_A_exp_enchcd_thrm_1}
	Пусть оператор $\widehat{N}_{Q}(\boldsymbol{\theta})$, определённый в~\emph{(\ref{N_Q(theta)})}, равен нулю\emph{:} $\widehat{N}_{Q}(\boldsymbol{\theta})=0$ при всех $\boldsymbol{\theta} \in \mathbb{S}^{d-1}$. Тогда при $\tau \in \mathbb{R}$ и $\varepsilon > 0$ справедлива оценка
	\begin{equation*}
	\bigl\| J(\varepsilon; \tau) \mathcal{R}(\varepsilon) \bigr\|_{L_2(\mathbb{R}^d) \to L_2(\mathbb{R}^d)} \le \mathcal{C}_2(1 + |\tau|^{1/2}) \varepsilon.
	\end{equation*}
	Константа $\mathcal{C}_2$ зависит только от $\alpha_0$, $\alpha_1$, $\|g\|_{L_\infty}$, $\|g^{-1}\|_{L_\infty}$, $\|f\|_{L_{\infty}}$, $\|f^{-1}\|_{L_{\infty}}$ и $r_0$.
\end{thrm}
\begin{thrm}
	\label{sndw_A_exp_enchcd_thrm_2}
	Пусть выполнено условие~\emph{\ref{sndw_cond1}} \emph{(}или более сильное условие~\emph{\ref{sndw_cond2}}\emph{)}. Тогда при $\tau \in \mathbb{R}$ и $\varepsilon > 0$ справедлива оценка
	\begin{equation*}
	\bigl\| J(\varepsilon; \tau) \mathcal{R}(\varepsilon) \bigr\|_{L_2(\mathbb{R}^d) \to L_2(\mathbb{R}^d)} \le \mathcal{C}_3(1 + |\tau|^{1/2}) \varepsilon.
	\end{equation*}
	Константа $\mathcal{C}_3$ зависит от $\alpha_0$, $\alpha_1$, $\|g\|_{L_\infty}$, $\|g^{-1}\|_{L_\infty}$, $\|f\|_{L_{\infty}}$, $\|f^{-1}\|_{L_{\infty}}$, $r_0$, а также от $n$ и $c^{\circ}$.
\end{thrm}
Теорема~\ref{sndw_A_exp_general_thrm} была доказана в~\cite[теорема~10.1]{BSu2008}. Теоремы~\ref{sndw_A_exp_enchcd_thrm_1} и~\ref{sndw_A_exp_enchcd_thrm_2} усиливают результаты теорем~12.6 и~12.7 из~\cite{Su2017} в отношении зависимости оценок от $\tau$.

Применение теорем~\ref{sndw_A(k)_exp_smooth_shrp_thrm_1}, \ref{sndw_A(k)_exp_smooth_shrp_thrm_2} позволяет подтвердить точность теорем~\ref{sndw_A_exp_general_thrm}, \ref{sndw_A_exp_enchcd_thrm_1}, \ref{sndw_A_exp_enchcd_thrm_2} в отношении сглаживания.
\begin{thrm}[\cite{Su2017}]
	\label{sndw_A_exp_smooth_shrp_thrm_1}
	Пусть $\widehat{N}_{0,Q} (\boldsymbol{\theta}_0) \ne 0$ при некотором $\boldsymbol{\theta}_0 \in \mathbb{S}^{d-1}$. Пусть $\tau \ne 0$ и $0 \le s < 3$. Тогда не существует такой константы $\mathcal{C}(\tau) >0$, чтобы оценка
	\begin{equation*}
	\bigl\| J(\varepsilon; \tau) \mathcal{R}(\varepsilon)^{s/2} \bigr\|_{L_2(\mathbb{R}^d) \to L_2(\mathbb{R}^d)} \le \mathcal{C}(\tau) \varepsilon
	\end{equation*}
	выполнялась при всех достаточно малых $\varepsilon > 0$.
\end{thrm}

\begin{thrm}
	\label{sndw_A_exp_smooth_shrp_thrm_2}
	Пусть $\widehat{N}_{0,Q} (\boldsymbol{\theta}) = 0$ при всех $\boldsymbol{\theta} \in \mathbb{S}^{d-1}$ и пусть $\widehat{\mathcal{N}}_Q^{(q)} (\boldsymbol{\theta}_0) \ne 0$  при некоторых $q \in \{1,\ldots,p(\boldsymbol{\theta}_0)\}$ и $\boldsymbol{\theta}_0 \in \mathbb{S}^{d-1}$. 
	Пусть $\tau \ne 0$ и $0 \le s < 2$. Тогда не существует такой константы $\mathcal{C}(\tau) >0$, чтобы оценка
	\begin{equation*}
	\bigl\| J(\varepsilon; \tau) \mathcal{R}(\varepsilon)^{s/2} \bigr\|_{L_2(\mathbb{R}^d) \to L_2(\mathbb{R}^d)} \le \mathcal{C}(\tau) \varepsilon
	\end{equation*}
	выполнялась при всех достаточно малых $\varepsilon > 0$.
\end{thrm}
Теорема~\ref{sndw_A_exp_smooth_shrp_thrm_1} была доказана в~\cite[теорема~12.8]{Su2017}.

Далее, применение теорем~\ref{sndw_A(k)_exp_time_shrp_thrm_1}, \ref{sndw_A(k)_exp_time_shrp_thrm_2} позволяет подтвердить точность теорем~\ref{sndw_A_exp_general_thrm}, \ref{sndw_A_exp_enchcd_thrm_1}, \ref{sndw_A_exp_enchcd_thrm_2} в отношении зависимости оценки от времени.

\begin{thrm}
	\label{sndw_A_exp_shrp_thrm_1}
	Пусть $\widehat{N}_{0,Q} (\boldsymbol{\theta}_0) \ne 0$ при некотором $\boldsymbol{\theta}_0 \in \mathbb{S}^{d-1}$. Тогда не существует положительной функции $\mathcal{C}(\tau)$ такой, что $\lim_{\tau \to \infty} \mathcal{C}(\tau)/ |\tau| = 0$ и выполнена оценка
	\begin{equation*}
	\bigl\| J(\varepsilon;\tau) \mathcal{R} (\varepsilon)^{3/2} \bigr\|_{L_2(\mathbb{R}^d) \to L_2(\mathbb{R}^d)} \le \mathcal{C}(\tau) \varepsilon
	\end{equation*}
	при всех $\tau \in \mathbb{R}$ и всех достаточно малых $\varepsilon > 0$.
\end{thrm}
\begin{thrm}
	\label{sndw_A_exp_shrp_thrm_2}
	Пусть $\widehat{N}_{0,Q} (\boldsymbol{\theta}) = 0$ при всех $\boldsymbol{\theta} \in \mathbb{S}^{d-1}$ и пусть $\widehat{\mathcal{N}}_Q^{(q)} (\boldsymbol{\theta}_0) \ne 0$  при некотором $q \in \{1,\ldots,p(\boldsymbol{\theta}_0)\}$ и $\boldsymbol{\theta}_0 \in \mathbb{S}^{d-1}$. Тогда не существует положительной функции $\mathcal{C}(\tau)$ такой, что $\lim_{\tau \to \infty} \mathcal{C}(\tau)/ |\tau|^{1/2} = 0$ и выполнена оценка
	\begin{equation*}
	\bigl\| J(\varepsilon;\tau) \mathcal{R} (\varepsilon) \bigr\|_{L_2(\mathbb{R}^d) \to L_2(\mathbb{R}^d)} \le \mathcal{C}(\tau) \varepsilon
	\end{equation*}
	при всех $\tau \in \mathbb{R}$ и всех достаточно малых $\varepsilon > 0$.
\end{thrm}

\part{Задачи усреднения для нестационарных уравнений \\ типа Шрёдингера}
\label{main_results_part}

\section{Усреднение оператора $e^{-i\tau\mathcal{A}_\varepsilon}$}
\label{main_results_exp_section}

\subsection{Операторы $\widehat{\mathcal{A}}_\varepsilon$, $\mathcal{A}_\varepsilon$. Масштабное преобразование}
Если $\psi(\mathbf{x})$~--- измеримая $\Gamma$-периодическая функция в $\mathbb{R}^d$, условимся использовать обозначение $\psi^{\varepsilon}(\mathbf{x}) \coloneqq \psi(\varepsilon^{-1} \mathbf{x}), \; \varepsilon > 0$. \emph{Наши основные объекты}~--- операторы $\widehat{\mathcal{A}}_\varepsilon$, $\mathcal{A}_\varepsilon$, действующие в $L_2 (\mathbb{R}^d; \mathbb{C}^n)$, формально заданные выражениями
\begin{align}
\label{Ahat_eps}
\widehat{\mathcal{A}}_\varepsilon &\coloneqq b(\mathbf{D})^* g^{\varepsilon}(\mathbf{x}) b(\mathbf{D}), \\
\label{A_eps}
\mathcal{A}_\varepsilon &\coloneqq (f^{\varepsilon}(\mathbf{x}))^* b(\mathbf{D})^* g^{\varepsilon}(\mathbf{x}) b(\mathbf{D}) f^{\varepsilon}(\mathbf{x}).
\end{align}
Строгие определения даются через соответствующие квадратичные формы (ср.~п.~\ref{A_section}).

Пусть $T_{\varepsilon}$~--- \emph{унитарный в $L_2 (\mathbb{R}^d; \mathbb{C}^n)$ оператор масштабного преобразования}: 
$(T_{\varepsilon} \mathbf{u})(\mathbf{x}) = \varepsilon^{d/2} \mathbf{u} (\varepsilon \mathbf{x})$, $\varepsilon > 0$.
Тогда справедливо тождество $\mathcal{A}_\varepsilon = \varepsilon^{-2}T_{\varepsilon}^* \mathcal{A} T_{\varepsilon}$. Следовательно,
\begin{equation}
\label{exp_scale_transform}
e^{-i\tau\mathcal{A}_{\varepsilon}} = T_{\varepsilon}^* e^{-i\tau\varepsilon^{-2} \mathcal{A}} T_{\varepsilon}.
\end{equation}
Аналогичные соотношения выполнены и для оператора $\widehat{\mathcal{A}}_{\varepsilon}$. Применяя масштабное преобразование к резольвенте оператора $\mathcal{H}_0 = - \Delta$ и используя обозначение~(\ref{R(epsilon)}), получаем
\begin{equation}
\label{H0_resolv_scale_transform}
(\mathcal{H}_0 + I)^{-1} = \varepsilon^2 T_\varepsilon^* (\mathcal{H}_0 + \varepsilon^2 I)^{-1} T_\varepsilon = T_\varepsilon^* \mathcal{R} (\varepsilon) T_\varepsilon.
\end{equation}
Наконец, если $\psi(\mathbf{x})$~--- $\Gamma$-периодическая функция, то $[\psi^{\varepsilon}] = T_\varepsilon^* [\psi] T_\varepsilon$.

\subsection{Усреднение оператора $e^{-i\tau\widehat{\mathcal{A}}_\varepsilon}$}
Начнём с более простого оператора~(\ref{Ahat_eps}). Пусть $\widehat{\mathcal{A}}^0$~--- эффективный оператор~(\ref{hatA0}). Применяя соотношения вида~(\ref{exp_scale_transform}) (для операторов $\widehat{\mathcal{A}}_\varepsilon$ и $\widehat{\mathcal{A}}^0$), а также~(\ref{H0_resolv_scale_transform}), получаем тождество
\begin{equation}
\label{hatB_eps_exps_scale_transform}
(e^{-i\tau\widehat{\mathcal{A}}_{\varepsilon}} - e^{-i\tau\widehat{\mathcal{A}}^0}) (\mathcal{H}_0 + I)^{-s/2} =  T_{\varepsilon}^* \widehat{J}(\varepsilon; \tau) \mathcal{R} (\varepsilon)^{s/2} T_{\varepsilon}, \qquad \varepsilon > 0.
\end{equation}

Используя теорему~\ref{hatA_exp_general_thrm} и~(\ref{hatB_eps_exps_scale_transform}), можно получить следующий результат, ранее доказанный в~\cite[теорема~12.2]{BSu2008}
\begin{thrm}[\cite{BSu2008}]
	\label{hatA_eps_exp_general_thrm}  
	Пусть $\widehat{\mathcal{A}}_{\varepsilon}$~--- оператор~\emph{(\ref{Ahat_eps})} и  $\widehat{\mathcal{A}}^0$~---  эффективный оператор~\emph{(\ref{hatA0})}. Тогда при $0 \le s \le 3$ и $\tau \in \mathbb{R}$, $\varepsilon > 0$ справедлива оценка
	\begin{equation*}
	\| e^{-i\tau\widehat{\mathcal{A}}_{\varepsilon}} - e^{-i\tau\widehat{\mathcal{A}}^0} \|_{H^s (\mathbb{R}^d) \to L_2 (\mathbb{R}^d)} \le  \widehat{\mathfrak{C}}_1 (s) (1 +  |\tau|)^{s/3} \varepsilon^{s/3},  
	\end{equation*}
	где $\widehat{\mathfrak{C}}_1 (s) = 2^{1-s/3} \widehat{\mathcal{C}}_1^{s/3} $. Константа $\widehat{\mathcal{C}}_1$ зависит только от $\alpha_0$, $\alpha_1$, $\|g\|_{L_\infty}$, $\|g^{-1}\|_{L_\infty}$ и $r_0$. 
\end{thrm}

Этот результат может быть усилен при дополнительных предположениях. Из теоремы~\ref{hatA_exp_enchcd_thrm_1} выводится следующий результат.
\begin{thrm}
	\label{hatA_eps_exp_enchcd_thrm_1}
	Пусть выполнены условия теоремы~\emph{\ref{hatA_eps_exp_general_thrm}}. Пусть оператор $\widehat{N}(\boldsymbol{\theta})$ определён в~\emph{(\ref{hatN(theta)})}. Предположим, что $\widehat{N}(\boldsymbol{\theta})=0$ при всех $\boldsymbol{\theta} \in \mathbb{S}^{d-1}$. Тогда при $0 \le s \le 2$ и $\tau \in \mathbb{R}$, $\varepsilon > 0$ справедлива оценка
	\begin{equation}
	\label{hatA_eps_exp_enchcd_est_1}
	\| e^{-i\tau\widehat{\mathcal{A}}_{\varepsilon}} - e^{-i\tau\widehat{\mathcal{A}}^0} \|_{H^s (\mathbb{R}^d) \to L_2 (\mathbb{R}^d)} \le  \widehat{\mathfrak{C}}_2 (s) (1 +  |\tau|^{1/2})^{s/2} \varepsilon^{s/2},  
	\end{equation}
	где $\widehat{\mathfrak{C}}_2 (s) = 2^{1-s/2} \widehat{\mathcal{C}}_2^{s/2} $. Константа $\widehat{\mathcal{C}}_2$ зависит только от $\alpha_0$, $\alpha_1$, $\|g\|_{L_\infty}$, $\|g^{-1}\|_{L_\infty}$ и $r_0$.
\end{thrm}
\begin{proof}
	Ввиду унитарности оператора $T_\varepsilon$ и~(\ref{hatB_eps_exps_scale_transform}) из теоремы~\ref{hatA_exp_enchcd_thrm_1} следует оценка
	\begin{equation}
	\label{hatA_eps_exp_enchcd_est_1_L2L2}
	\| ( e^{-i\tau\widehat{\mathcal{A}}_{\varepsilon}} - e^{-i\tau\widehat{\mathcal{A}}^0}) (\mathcal{H}_0 + I)^{-1} \|_{L_2(\mathbb{R}^d) \to L_2(\mathbb{R}^d)} \le \widehat{\mathcal{C}}_2 (1+ |\tau|^{1/2}) \varepsilon.
	\end{equation}
	Очевидно,
	\begin{equation}
	\label{hatA_exp_trivial_est}
	\| e^{-i\tau\widehat{\mathcal{A}}_{\varepsilon}} - e^{-i\tau\widehat{\mathcal{A}}^0} \|_{L_2 (\mathbb{R}^d) \to L_2 (\mathbb{R}^d)} \le 2.
	\end{equation}
	Интерполируя между (\ref{hatA_exp_trivial_est}) и~(\ref{hatA_eps_exp_enchcd_est_1_L2L2}), при $ 0 \le s \le 2$ получаем
	\begin{equation}
	\label{hatA_exp_enchcd_est_1_interp}
	\| (e^{-i\tau\widehat{\mathcal{A}}_{\varepsilon}} - e^{-i\tau\widehat{\mathcal{A}}^0}) (\mathcal{H}_0 + I)^{-s/2} \|_{L_2 (\mathbb{R}^d) \to L_2 (\mathbb{R}^d)}
	\le 2^{1-s/2} \widehat{\mathcal{C}}_2^{s/2} (1 +  |\tau|^{1/2})^{s/2} \varepsilon^{s/2}. 
	\end{equation}
	Оператор $(\mathcal{H}_0 + I)^{s/2}$ осуществляет изометрический изоморфизм пространства Соболева $H^s(\mathbb{R}^d; \mathbb{C}^n)$ на 
	$L_2(\mathbb{R}^d; \mathbb{C}^n)$. Поэтому оценка~(\ref{hatA_exp_enchcd_est_1_interp}) эквивалентна~(\ref{hatA_eps_exp_enchcd_est_1}).
\end{proof}
Аналогично, применяя теорему~\ref{hatA_exp_enchcd_thrm_2}, получаем следующую теорему.
\begin{thrm}
	\label{hatA_eps_exp_enchcd_thrm_2}
	Пусть выполнены условия теоремы~\emph{\ref{hatA_eps_exp_general_thrm}}. Кроме того, пусть выполнено условие~\emph{\ref{cond1}} \emph{(}или более сильное условие~\emph{\ref{cond2})}. Тогда при $0 \le s \le 2$ и $\tau \in \mathbb{R}$, $\varepsilon > 0$ справедлива оценка
	\begin{equation*}
	\| e^{-i\tau\widehat{\mathcal{A}}_{\varepsilon}} - e^{-i\tau\widehat{\mathcal{A}}^0} \|_{H^s (\mathbb{R}^d) \to L_2 (\mathbb{R}^d)} \le  \widehat{\mathfrak{C}}_3 (s) (1 +  |\tau|^{1/2})^{s/2} \varepsilon^{s/2},  
	\end{equation*}
	где $\widehat{\mathfrak{C}}_3 (s) = 2^{1-s/2} \widehat{\mathcal{C}}_3^{s/2} $. Константа $\widehat{\mathcal{C}}_3$ зависит от $\alpha_0$, $\alpha_1$, $\|g\|_{L_\infty}$, $\|g^{-1}\|_{L_\infty}$, $r_0$, а также от $n$ и $\widehat{c}^{\circ}$.
\end{thrm}
Теоремы~\ref{hatA_eps_exp_enchcd_thrm_1} и~\ref{hatA_eps_exp_enchcd_thrm_2} усиливают результаты теорем~13.2 и~13.4 из~\cite{Su2017} в отношении зависимости оценок от $\tau$.

Применение теорем~\ref{hatA_exp_smooth_shrp_thrm_1}, \ref{hatA_exp_smooth_shrp_thrm_2} позволяет подтвердить точность теорем~\ref{hatA_eps_exp_general_thrm}, \ref{hatA_eps_exp_enchcd_thrm_1}, \ref{hatA_eps_exp_enchcd_thrm_2} в отношении типа операторной нормы.
\begin{thrm}[\cite{Su2017}]
	\label{hatA_eps_exp_smooth_shrp_thrm_1}
	Пусть $\widehat{N}_0 (\boldsymbol{\theta}_0) \ne 0$ при некотором $\boldsymbol{\theta}_0 \in \mathbb{S}^{d-1}$. Пусть $\tau \ne 0$ и $0 \le s < 3$. Тогда не существует такой константы $\mathcal{C}(\tau) >0$, чтобы оценка
	\begin{equation*}
	\| e^{-i\tau\widehat{\mathcal{A}}_{\varepsilon}} - e^{-i\tau\widehat{\mathcal{A}}^0} \|_{H^s (\mathbb{R}^d) \to L_2 (\mathbb{R}^d)} \le \mathcal{C}(\tau) \varepsilon
	\end{equation*}
	выполнялась при всех достаточно малых $\varepsilon > 0$.
\end{thrm}

\begin{thrm}
	Пусть $\widehat{N}_0 (\boldsymbol{\theta}) = 0$ при всех $\boldsymbol{\theta} \in \mathbb{S}^{d-1}$ и пусть $\widehat{\mathcal{N}}^{(q)} (\boldsymbol{\theta}_0) \ne 0$  при некоторых $q \in \{1,\ldots,p(\boldsymbol{\theta}_0)\}$ и $\boldsymbol{\theta}_0 \in \mathbb{S}^{d-1}$. 
	Пусть $\tau \ne 0$ и $0 \le s < 2$. Тогда не существует такой константы $\mathcal{C}(\tau) >0$, чтобы оценка
	\begin{equation*}
	\| e^{-i\tau\widehat{\mathcal{A}}_{\varepsilon}} - e^{-i\tau\widehat{\mathcal{A}}^0} \|_{H^s (\mathbb{R}^d) \to L_2 (\mathbb{R}^d)} \le \mathcal{C}(\tau) \varepsilon
	\end{equation*}
	выполнялась при всех достаточно малых $\varepsilon > 0$.
\end{thrm}
Теорема~\ref{hatA_eps_exp_smooth_shrp_thrm_1} была доказана в~\cite[теорема~13.6]{Su2017}.

Наконец, применение теорем~\ref{hatA_exp_shrp_thrm_1}, \ref{hatA_exp_shrp_thrm_2} позволяет подтвердить точность теорем~\ref{hatA_eps_exp_general_thrm}, \ref{hatA_eps_exp_enchcd_thrm_1}, \ref{hatA_eps_exp_enchcd_thrm_2} в отношении зависимости оценки от времени.
\begin{thrm}
	Пусть $\widehat{N}_0 (\boldsymbol{\theta}_0) \ne 0$ при некотором $\boldsymbol{\theta}_0 \in \mathbb{S}^{d-1}$. Тогда не существует положительной функции $\mathcal{C}(\tau)$ такой, что $\lim_{\tau \to \infty} \mathcal{C}(\tau)/ |\tau| = 0$ и выполнена оценка
	\begin{equation*}
	\| e^{-i\tau\widehat{\mathcal{A}}_{\varepsilon}} - e^{-i\tau\widehat{\mathcal{A}}^0} \|_{H^3 (\mathbb{R}^d) \to L_2 (\mathbb{R}^d)} \le \mathcal{C}(\tau) \varepsilon
	\end{equation*}
	при всех $\tau \in \mathbb{R}$ и всех достаточно малых $\varepsilon > 0$.
\end{thrm}

\begin{thrm}
	Пусть $\widehat{N}_0 (\boldsymbol{\theta}) = 0$ при всех $\boldsymbol{\theta} \in \mathbb{S}^{d-1}$ и пусть $\widehat{\mathcal{N}}^{(q)} (\boldsymbol{\theta}_0) \ne 0$  при некотором $q \in \{1,\ldots,p(\boldsymbol{\theta}_0)\}$ и $\boldsymbol{\theta}_0 \in \mathbb{S}^{d-1}$. Тогда не существует положительной функции $\mathcal{C}(\tau)$ такой, что $\lim_{\tau \to \infty} \mathcal{C}(\tau)/ |\tau|^{1/2} = 0$ и выполнена оценка
	\begin{equation*}
	\| e^{-i\tau\widehat{\mathcal{A}}_{\varepsilon}} - e^{-i\tau\widehat{\mathcal{A}}^0} \|_{H^2 (\mathbb{R}^d) \to L_2 (\mathbb{R}^d)} \le \mathcal{C}(\tau) \varepsilon
	\end{equation*}
	при всех $\tau \in \mathbb{R}$ и всех достаточно малых $\varepsilon > 0$.
\end{thrm}

\subsection{Усреднение окаймлённого оператора $e^{-i\tau\mathcal{A}_\varepsilon}$}

Рассмотрим теперь более общий оператор $\mathcal{A}_{\varepsilon}$ (см.~(\ref{A_eps})).
Пусть оператор $\mathcal{A}^0$ определён в~(\ref{A0}). Применяя соотношения вида~(\ref{exp_scale_transform}) (для операторов $\mathcal{A}_\varepsilon$ и $\mathcal{A}^0$), а также~(\ref{H0_resolv_scale_transform}), получаем тождество
\begin{equation}
\label{sndw_exps_scale_transform}
\bigl(f^\varepsilon e^{-i \tau \mathcal{A}_{\varepsilon}} (f^\varepsilon)^{-1} - f_0 e^{-i \tau \mathcal{A}^0} f_0^{-1} \bigr) (\mathcal{H}_0 + I)^{-s/2} =  T_{\varepsilon}^* J(\varepsilon; \tau) \mathcal{R} (\varepsilon)^{s/2} T_{\varepsilon}, \qquad \varepsilon > 0.
\end{equation}

Из теоремы~\ref{sndw_A_exp_general_thrm} и тождества~(\ref{sndw_exps_scale_transform}) можно получить следующий результат, ранее доказанный в~\cite[теорема~12.4]{BSu2008}.
\begin{thrm}[\cite{BSu2008}]
	\label{sndw_A_eps_exp_general_thrm}
	Пусть $\mathcal{A}_{\varepsilon}$ и $\mathcal{A}^0$~--- операторы, определённые выражениями~\emph{(\ref{A_eps})} и~\emph{(\ref{A0})}. Тогда при $0 \le s \le 3$ и $\tau \in \mathbb{R}$, $\varepsilon > 0$ справедлива оценка
	\begin{equation*}
	\| f^\varepsilon e^{-i \tau \mathcal{A}_{\varepsilon}} (f^\varepsilon)^{-1} - f_0 e^{-i \tau \mathcal{A}^0} f_0^{-1} \|_{H^s (\mathbb{R}^d) \to L_2 (\mathbb{R}^d)} \le  \mathfrak{C}_1 (s) (1 +  |\tau|)^{s/3} \varepsilon^{s/3},  
	\end{equation*}
	где $\mathfrak{C}_1 (s) = (2 \|f\|_{L_\infty} \|f^{-1}\|_{L_\infty})^{1-s/3} \mathcal{C}_1^{s/3}$. Константа $\mathcal{C}_1$ зависит только от $\alpha_0$, $\alpha_1$, $\|g\|_{L_\infty}$, $\|g^{-1}\|_{L_\infty}$, $\|f\|_{L_{\infty}}$, $\|f^{-1}\|_{L_{\infty}}$ и $r_0$.
\end{thrm}
Этот результат может быть усилен при дополнительных предположениях. Применяя теорему~\ref{sndw_A_exp_enchcd_thrm_1} c учётом~(\ref{sndw_exps_scale_transform}) и очевидной оценки
\begin{equation*}
\| f^\varepsilon e^{-i \tau \mathcal{A}_{\varepsilon}} (f^\varepsilon)^{-1} - f_0 e^{-i \tau \mathcal{A}^0} f_0^{-1} \|_{L_2(\mathbb{R}^d) \to L_2(\mathbb{R}^d)} \le 2 \|f\|_{L_\infty} \|f^{-1}\|_{L_\infty},
\end{equation*}
получаем следующий результат.
\begin{thrm}
	\label{sndw_A_eps_exp_enchcd_thrm_1}
	Пусть выполнены условия теоремы~\emph{\ref{sndw_A_eps_exp_general_thrm}}. Пусть оператор $\widehat{N}_Q (\boldsymbol{\theta})$, определённый в~\emph{(\ref{N_Q(theta)})}, равен нулю\emph{:} $\widehat{N}_Q (\boldsymbol{\theta}) = 0$ при всех $\boldsymbol{\theta} \in \mathbb{S}^{d-1}$. Тогда при $0 \le s \le 2$ и $\tau \in \mathbb{R}$, $\varepsilon > 0$ справедлива оценка
	\begin{equation*}
	\| f^\varepsilon e^{-i \tau \mathcal{A}_{\varepsilon}} (f^\varepsilon)^{-1} - f_0 e^{-i \tau \mathcal{A}^0} f_0^{-1} \|_{H^s (\mathbb{R}^d) \to L_2 (\mathbb{R}^d)} \le  \mathfrak{C}_2 (s) (1 +  |\tau|^{1/2})^{s/2} \varepsilon^{s/2},  
	\end{equation*}
	где $\mathfrak{C}_2 (s) = (2 \|f\|_{L_\infty} \|f^{-1}\|_{L_\infty})^{1-s/2} \mathcal{C}_2^{s/2}$. Константа $\mathcal{C}_2$ зависит только от $\alpha_0$, $\alpha_1$, $\|g\|_{L_\infty}$, $\|g^{-1}\|_{L_\infty}$, $\|f\|_{L_{\infty}}$, $\|f^{-1}\|_{L_{\infty}}$ и $r_0$.
\end{thrm}
Аналогично, применяя теорему~\ref{sndw_A_exp_enchcd_thrm_2}, получаем следующее утверждение.
\begin{thrm}
	\label{sndw_A_eps_exp_enchcd_thrm_2}
	Пусть выполнены условия теоремы~\emph{\ref{sndw_A_eps_exp_general_thrm}}. Кроме того, пусть выполнено условие~\emph{\ref{sndw_cond1}} \emph{(}или более сильное условие~\emph{\ref{sndw_cond2})}. Тогда при $0 \le s \le 2$ и $\tau \in \mathbb{R}$, $\varepsilon > 0$ справедлива оценка
	\begin{equation*}
	\| f^\varepsilon e^{-i \tau \mathcal{A}_{\varepsilon}} (f^\varepsilon)^{-1} - f_0 e^{-i \tau \mathcal{A}^0} f_0^{-1} \|_{H^s (\mathbb{R}^d) \to L_2 (\mathbb{R}^d)} \le  \mathfrak{C}_3 (s) (1 +  |\tau|^{1/2})^{s/2} \varepsilon^{s/2}, 
	\end{equation*}
	где $\mathfrak{C}_3 (s) = (2 \|f\|_{L_\infty} \|f^{-1}\|_{L_\infty})^{1-s/2}  \mathcal{C}_3^{s/2}$. Константа $\mathcal{C}_3$ зависит от $\alpha_0$, $\alpha_1$, $\|g\|_{L_\infty}$, $\|g^{-1}\|_{L_\infty}$, $\|f\|_{L_{\infty}}$, $\|f^{-1}\|_{L_{\infty}}$, $r_0$, а также от $n$ и $c^{\circ}$. 
\end{thrm}
Теоремы~\ref{sndw_A_eps_exp_enchcd_thrm_1} и~\ref{sndw_A_eps_exp_enchcd_thrm_2} усиливают результаты теорем~13.8 и~13.10 из~\cite{Su2017} в отношении зависимости оценок от $\tau$.

Применение теорем~\ref{sndw_A_exp_smooth_shrp_thrm_1}, \ref{sndw_A_exp_smooth_shrp_thrm_2} позволяет подтвердить точность теорем~\ref{sndw_A_eps_exp_general_thrm}, \ref{sndw_A_eps_exp_enchcd_thrm_1}, \ref{sndw_A_eps_exp_enchcd_thrm_2} в отношении типа операторной нормы.
\begin{thrm}[\cite{Su2017}]
	\label{sndw_A_eps_exp_smooth_shrp_thrm_1}
	Пусть $\widehat{N}_{0,Q} (\boldsymbol{\theta}_0) \ne 0$ при некотором $\boldsymbol{\theta}_0 \in \mathbb{S}^{d-1}$. Пусть $\tau \ne 0$ и $0 \le s < 3$. Тогда не существует такой константы $\mathcal{C}(\tau) >0$, чтобы оценка
	\begin{equation*}
	\| f^\varepsilon e^{-i \tau \mathcal{A}_{\varepsilon}} (f^\varepsilon)^{-1} - f_0 e^{-i \tau \mathcal{A}^0} f_0^{-1} \|_{H^s (\mathbb{R}^d) \to L_2 (\mathbb{R}^d)} \le \mathcal{C}(\tau) \varepsilon
	\end{equation*}
	выполнялась при всех достаточно малых $\varepsilon > 0$.
\end{thrm}

\begin{thrm}
	Пусть $\widehat{N}_{0,Q} (\boldsymbol{\theta}) = 0$ при всех $\boldsymbol{\theta} \in \mathbb{S}^{d-1}$ и пусть $\widehat{\mathcal{N}}_Q^{(q)} (\boldsymbol{\theta}_0) \ne 0$  при некоторых $q \in \{1,\ldots,p(\boldsymbol{\theta}_0)\}$ и $\boldsymbol{\theta}_0 \in \mathbb{S}^{d-1}$. 
	Пусть $\tau \ne 0$ и $0 \le s < 2$. Тогда не существует такой константы $\mathcal{C}(\tau) >0$, чтобы оценка
	\begin{equation*}
	\| f^\varepsilon e^{-i \tau \mathcal{A}_{\varepsilon}} (f^\varepsilon)^{-1} - f_0 e^{-i \tau \mathcal{A}^0} f_0^{-1} \|_{H^s (\mathbb{R}^d) \to L_2 (\mathbb{R}^d)} \le \mathcal{C}(\tau) \varepsilon
	\end{equation*}
	выполнялась при всех достаточно малых $\varepsilon > 0$.
\end{thrm}
Теорема~\ref{sndw_A_eps_exp_smooth_shrp_thrm_1} была доказана в~\cite[теорема~13.12]{Su2017}.

Применение теоремы~\ref{sndw_A_exp_shrp_thrm_1} позволяет подтвердить точность теоремы~\ref{sndw_A_eps_exp_general_thrm} в отношении зависимости оценки от времени.
\begin{thrm}
	Пусть $\widehat{N}_{0,Q} (\boldsymbol{\theta}_0) \ne 0$ при некотором $\boldsymbol{\theta}_0 \in \mathbb{S}^{d-1}$. Тогда не существует положительной функции $\mathcal{C}(\tau)$ такой, что $\lim_{\tau \to \infty} \mathcal{C}(\tau)/ |\tau| = 0$ и выполнена оценка
	\begin{equation*}
	\| f^\varepsilon e^{-i \tau \mathcal{A}_{\varepsilon}} (f^\varepsilon)^{-1} - f_0 e^{-i \tau \mathcal{A}^0} f_0^{-1} \|_{H^3 (\mathbb{R}^d) \to L_2 (\mathbb{R}^d)} \le \mathcal{C}(\tau) \varepsilon
	\end{equation*}
	при всех $\tau \in \mathbb{R}$ и всех достаточно малых $\varepsilon > 0$.
\end{thrm}
Аналогично, применение теоремы~\ref{sndw_A_exp_shrp_thrm_2} позволяет подтвердить точность теорем~\ref{sndw_A_eps_exp_enchcd_thrm_1}, \ref{sndw_A_eps_exp_enchcd_thrm_2}.
\begin{thrm}
	Пусть $\widehat{N}_{0,Q} (\boldsymbol{\theta}) = 0$ при всех $\boldsymbol{\theta} \in \mathbb{S}^{d-1}$ и пусть $\widehat{\mathcal{N}}_Q^{(q)} (\boldsymbol{\theta}_0) \ne 0$  при некотором $q \in \{1,\ldots,p(\boldsymbol{\theta}_0)\}$ и $\boldsymbol{\theta}_0 \in \mathbb{S}^{d-1}$. Тогда не существует положительной функции $\mathcal{C}(\tau)$ такой, что $\lim_{\tau \to \infty} \mathcal{C}(\tau)/ |\tau|^{1/2} = 0$ и выполнена оценка
	\begin{equation*}
	\| f^\varepsilon e^{-i \tau \mathcal{A}_{\varepsilon}} (f^\varepsilon)^{-1} - f_0 e^{-i \tau \mathcal{A}^0} f_0^{-1} \|_{H^2 (\mathbb{R}^d) \to L_2 (\mathbb{R}^d)} \le \mathcal{C}(\tau) \varepsilon
	\end{equation*}
	при всех $\tau \in \mathbb{R}$ и всех достаточно малых $\varepsilon > 0$.
\end{thrm}

\section{Усреднение задачи Коши для уравнения типа Шрёдингера}
\label{main_results_Cauchy_section}
\subsection{Задача Коши для уравнения с оператором $\widehat{\mathcal{A}}_\varepsilon$}
Пусть $\mathbf{u}_\varepsilon (\mathbf{x}, \tau)$~--- решение следующей задачи Коши:
\begin{equation}
\label{Cauchy_hatA_eps}
\left\{
\begin{aligned}
&i\frac{\partial \mathbf{u}_\varepsilon (\mathbf{x}, \tau)}{\partial \tau} = b(\mathbf{D})^*g^{\varepsilon}(\mathbf{x}) b(\mathbf{D}) \mathbf{u}_\varepsilon (\mathbf{x}, \tau) + \mathbf{F} (\mathbf{x}, \tau), \qquad \mathbf{x} \in \mathbb{R}^d, \; \tau \in \mathbb{R}, \\
& \mathbf{u}_\varepsilon (\mathbf{x}, 0) = \boldsymbol{\phi} (\mathbf{x}),
\end{aligned}
\right.
\end{equation}
где $\boldsymbol{\phi} \in L_2 (\mathbb{R}^d; \mathbb{C}^n)$, $ \mathbf{F} \in L_{1, \mathrm{loc}} (\mathbb{R}; L_2 (\mathbb{R}^d; \mathbb{C}^n))$. Справедливо представление 
\begin{equation*}
\mathbf{u}_\varepsilon (\cdot, \tau) = e^{-i\tau\widehat{\mathcal{A}}_{\varepsilon}} \boldsymbol{\phi} - i \int_{0}^{\tau} e^{-i(\tau-\tilde{\tau})\widehat{\mathcal{A}}_{\varepsilon}} \mathbf{F} (\cdot, \tilde{\tau}) \, d \tilde{\tau}.
\end{equation*}
Пусть $\mathbf{u}_0 (\mathbf{x}, \tau)$~--- решение \textquotedblleft усреднённой\textquotedblright \ задачи:
\begin{equation}
\label{Cauchy_hatA0}
\left\{
\begin{aligned}
&i\frac{\partial \mathbf{u}_0 (\mathbf{x}, \tau)}{\partial \tau} = b(\mathbf{D})^* g^{0} b(\mathbf{D}) \mathbf{u}_0 (\mathbf{x}, \tau) + \mathbf{F} (\mathbf{x}, \tau), \qquad \mathbf{x} \in \mathbb{R}^d, \; \tau \in \mathbb{R}, \\
& \mathbf{u}_0 (\mathbf{x}, 0) = \boldsymbol{\phi} (\mathbf{x}), \qquad \mathbf{x} \in \mathbb{R}^d.
\end{aligned}
\right.
\end{equation}
Тогда 
\begin{equation*}
\mathbf{u}_0 (\cdot, \tau) = e^{-i\tau\widehat{\mathcal{A}}^0} \boldsymbol{\phi} - i\int_{0}^{\tau} e^{-i(\tau-\tilde{\tau})\widehat{\mathcal{A}}^0} \mathbf{F} (\cdot, \tilde{\tau}) \, d \tilde{\tau}.
\end{equation*}

Из теоремы~\ref{hatA_eps_exp_general_thrm} непосредственно вытекает следующий результат (доказанный ранее в~\cite[теорема~14.2]{BSu2008}).
\begin{thrm}[\cite{BSu2008}]
	\label{hatA_eps_Cauchy_general_thrm}
	Пусть $\mathbf{u}_\varepsilon$~--- решение задачи~\emph{(\ref{Cauchy_hatA_eps})} и $\mathbf{u}_0$~--- решение задачи~\emph{(\ref{Cauchy_hatA0})}.
	\begin{enumerate}[label=\emph{\arabic*$^{\circ}.$}, ref=\arabic*$^{\circ}$, leftmargin=2.5\parindent]
		\item Если $\boldsymbol{\phi} \in H^s (\mathbb{R}^d; \mathbb{C}^n)$, $\mathbf{F} \in L_{1,\mathrm{loc}}(\mathbb{R}; H^s (\mathbb{R}^d; \mathbb{C}^n))$, где $0 \le s \le 3$, то при $\tau \in \mathbb{R}$ и $\varepsilon > 0$ выполнена оценка
		\begin{equation*}
		\| \mathbf{u}_\varepsilon (\cdot, \tau) - \mathbf{u}_0 (\cdot, \tau) \|_{L_2(\mathbb{R}^d)} \le \varepsilon^{s/3} (1 +  |\tau|)^{s/3}  \widehat{\mathfrak{C}}_1 (s) \left(\| \boldsymbol{\phi} \|_{H^s(\mathbb{R}^d)} + \|\mathbf{F} \|_{L_1((0,\tau);H^s(\mathbb{R}^d))} \right).
		\end{equation*}
		При дополнительном предположении $\mathbf{F} \in L_p (\mathbb{R}_\pm; H^s (\mathbb{R}^d, \mathbb{C}^n))$, где $p \in [1, \infty]$, и при $\tau = \pm \varepsilon^{-\alpha}$, $0 < \varepsilon \le 1$, $0 < \alpha < s(s + 3/p')^{-1}$ справедлива оценка
		\begin{multline*}
		\| \mathbf{u}_\varepsilon (\cdot, \pm \varepsilon^{-\alpha}) - \mathbf{u}_0 (\cdot, \pm \varepsilon^{-\alpha}) \|_{L_2(\mathbb{R}^d)} \le \\ \le \varepsilon^{s(1-\alpha)/3} \cdot 2^{s/3}\widehat{\mathfrak{C}}_1 (s) \left(\| \boldsymbol{\phi} \|_{H^s(\mathbb{R}^d)} + \varepsilon^{-\alpha/p'} \|\mathbf{F} \|_{L_p(\mathbb{R}_\pm;H^s(\mathbb{R}^d))} \right).
		\end{multline*}
		Величина $\widehat{\mathfrak{C}}_1 (s)$ определена в~теореме \emph{\ref{hatA_eps_exp_general_thrm}}. Здесь $p^{-1} + (p')^{-1} = 1$.
		\item Если $\boldsymbol{\phi} \in L_2 (\mathbb{R}^d; \mathbb{C}^n)$ и $ \mathbf{F} \in L_{1, \mathrm{loc}} (\mathbb{R}; L_2 (\mathbb{R}^d; \mathbb{C}^n) )$, то
		\begin{equation*}
		\lim\limits_{\varepsilon \to 0} \| \mathbf{u}_\varepsilon (\cdot, \tau) - \mathbf{u}_0 (\cdot, \tau) \|_{L_2(\mathbb{R}^d)} = 0, \qquad \tau \in \mathbb{R}.
		\end{equation*}
		При дополнительном предположении $\mathbf{F} \in L_1 (\mathbb{R}_\pm; L_2 (\mathbb{R}^d, \mathbb{C}^n))$ справедливо
		\begin{equation*}
		\lim\limits_{\varepsilon \to 0} \| \mathbf{u}_\varepsilon (\cdot, \pm\varepsilon^{-\alpha}) - \mathbf{u}_0 (\cdot, \pm\varepsilon^{-\alpha}) \|_{L_2(\mathbb{R}^d)} = 0, \qquad 0 < \alpha < 1.
		\end{equation*}
	\end{enumerate}
\end{thrm}
Результат теоремы~\ref{hatA_eps_Cauchy_general_thrm} можно усилить при дополнительных предположениях. Применяя теорему~\ref{hatA_eps_exp_enchcd_thrm_1}, получаем следующее утверждение.
\begin{thrm}
	\label{hatA_eps_Cauchy_enchcd_thrm_1}
	Пусть выполнены условия теоремы~\emph{\ref{hatA_eps_Cauchy_general_thrm}}. Пусть оператор  $\widehat{N} (\boldsymbol{\theta})$ определён в~\emph{(\ref{hatN(theta)})}. Предположим, что $\widehat{N} (\boldsymbol{\theta}) = 0$ при всех $\boldsymbol{\theta} \in \mathbb{S}^{d-1}$.
	\begin{enumerate}[label=\emph{\arabic*$^{\circ}.$}, ref=\arabic*$^{\circ}$, leftmargin=2.5\parindent]
	\item	 Если $\boldsymbol{\phi} \in H^s (\mathbb{R}^d; \mathbb{C}^n)$, $\mathbf{F} \in L_{1,\mathrm{loc}}(\mathbb{R}; H^s (\mathbb{R}^d; \mathbb{C}^n))$, где $0 \le s \le 2$, то при $\tau \in \mathbb{R}$ и $\varepsilon > 0$ выполнена оценка
	\begin{equation*}
	\| \mathbf{u}_\varepsilon (\cdot, \tau) - \mathbf{u}_0 (\cdot, \tau) \|_{L_2(\mathbb{R}^d)} \le \varepsilon^{s/2} (1 +  |\tau|^{1/2})^{s/2} \widehat{\mathfrak{C}}_2 (s) \left(\| \boldsymbol{\phi} \|_{H^s(\mathbb{R}^d)} + \|\mathbf{F} \|_{L_1((0,s);H^s(\mathbb{R}^d))} \right).
	\end{equation*}
	При дополнительном предположении $\mathbf{F} \in L_p (\mathbb{R}_\pm; H^s (\mathbb{R}^d, \mathbb{C}^n))$, где $p \in [1, \infty]$, и при $\tau = \pm \varepsilon^{-\alpha}$, $0 < \varepsilon \le 1$, $0 < \alpha < 2s(s + 4/p')^{-1}$ справедлива оценка
	\begin{multline*}
	\| \mathbf{u}_\varepsilon (\cdot, \pm \varepsilon^{-\alpha}) - \mathbf{u}_0 (\cdot, \pm \varepsilon^{-\alpha}) \|_{L_2(\mathbb{R}^d)} \le \\ \le \varepsilon^{s(1-\alpha/2)/2} \cdot 2^{s/2}\widehat{\mathfrak{C}}_2 (s) \left(\| \boldsymbol{\phi} \|_{H^s(\mathbb{R}^d)} + \varepsilon^{-\alpha/p'} \|\mathbf{F} \|_{L_p(\mathbb{R}_\pm;H^s(\mathbb{R}^d))} \right).
	\end{multline*}
	Величина $\widehat{\mathfrak{C}}_2 (s)$ определена в~теореме \emph{\ref{hatA_eps_exp_enchcd_thrm_1}}. Здесь $p^{-1} + (p')^{-1} = 1$.
	\item Если $\boldsymbol{\phi} \in L_2 (\mathbb{R}^d; \mathbb{C}^n)$ и $\mathbf{F} \in L_1 (\mathbb{R}_\pm; L_2 (\mathbb{R}^d, \mathbb{C}^n))$, то
	\begin{equation*}
	\lim\limits_{\varepsilon \to 0} \| \mathbf{u}_\varepsilon (\cdot, \pm\varepsilon^{-\alpha}) - \mathbf{u}_0 (\cdot, \pm\varepsilon^{-\alpha}) \|_{L_2(\mathbb{R}^d)} = 0, \qquad 0 < \alpha < 2.
	\end{equation*}
	\end{enumerate}
\end{thrm}

Аналогично, применяя теорему~\ref{hatA_eps_exp_enchcd_thrm_2}, получаем следующее утверждение.
\begin{thrm}
	Пусть выполнены условия теоремы~\emph{\ref{hatA_eps_Cauchy_general_thrm}}. Предположим, что выполнено условие~\emph{\ref{cond1}} \emph{(}или более сильное условие~\emph{\ref{cond2})}.
	\begin{enumerate}[label=\emph{\arabic*$^{\circ}.$}, ref=\arabic*$^{\circ}$, leftmargin=2.5\parindent]
		\item	 Если $\boldsymbol{\phi} \in H^s (\mathbb{R}^d; \mathbb{C}^n)$, $\mathbf{F} \in L_{1,\mathrm{loc}}(\mathbb{R}; H^s (\mathbb{R}^d; \mathbb{C}^n))$, где $0 \le s \le 2$, то при $\tau \in \mathbb{R}$ и $\varepsilon > 0$ выполнена оценка
		\begin{equation*}
		\| \mathbf{u}_\varepsilon (\cdot, \tau) - \mathbf{u}_0 (\cdot, \tau) \|_{L_2(\mathbb{R}^d)} \le \varepsilon^{s/2} (1 +  |\tau|^{1/2})^{s/2} \widehat{\mathfrak{C}}_3 (s) \left(\| \boldsymbol{\phi} \|_{H^s(\mathbb{R}^d)} + \|\mathbf{F} \|_{L_1((0,\tau);H^s(\mathbb{R}^d))} \right).
		\end{equation*}
		При дополнительном предположении $\mathbf{F} \in L_p (\mathbb{R}_\pm; H^s (\mathbb{R}^d, \mathbb{C}^n))$, где $p \in [1, \infty]$, и при $\tau = \pm \varepsilon^{-\alpha}$, $0 < \varepsilon \le 1$, $0 < \alpha < 2s(s + 4/p')^{-1}$ справедлива оценка
		\begin{multline*}
		\| \mathbf{u}_\varepsilon (\cdot, \pm \varepsilon^{-\alpha}) - \mathbf{u}_0 (\cdot, \pm \varepsilon^{-\alpha}) \|_{L_2(\mathbb{R}^d)} \le \\ \le \varepsilon^{s(1-\alpha/2)/2} \cdot 2^{s/2}\widehat{\mathfrak{C}}_3 (s) \left(\| \boldsymbol{\phi} \|_{H^s(\mathbb{R}^d)} + \varepsilon^{-\alpha/p'} \|\mathbf{F} \|_{L_p(\mathbb{R}_\pm;H^s(\mathbb{R}^d))} \right).
		\end{multline*}
		Величина $\widehat{\mathfrak{C}}_3 (s)$ определена в~теореме \emph{\ref{hatA_eps_exp_enchcd_thrm_2}}. Здесь $p^{-1} + (p')^{-1} = 1$.
		\item Если $\boldsymbol{\phi} \in L_2 (\mathbb{R}^d; \mathbb{C}^n)$ и $\mathbf{F} \in L_1 (\mathbb{R}_\pm; L_2 (\mathbb{R}^d, \mathbb{C}^n))$, то
		\begin{equation*}
		\lim\limits_{\varepsilon \to 0} \| \mathbf{u}_\varepsilon (\cdot, \pm\varepsilon^{-\alpha}) - \mathbf{u}_0 (\cdot, \pm\varepsilon^{-\alpha}) \|_{L_2(\mathbb{R}^d)} = 0, \qquad 0 < \alpha < 2.
		\end{equation*}
	\end{enumerate}
\end{thrm}

\subsection{Задача Коши для уравнения с оператором $\mathcal{A}_\varepsilon$}

Рассмотрим более общую задачу Коши для уравнения с оператором $\mathcal{A}_\varepsilon$:
\begin{equation*}
\left\{
\begin{aligned}
&i\frac{\partial \mathbf{u}_\varepsilon (\mathbf{x}, \tau)}{\partial \tau} = (f^{\varepsilon}(\mathbf{x}))^* b(\mathbf{D})^* g^{\varepsilon}(\mathbf{x}) b(\mathbf{D}) f^{\varepsilon}(\mathbf{x}) \mathbf{u}_\varepsilon (\mathbf{x}, \tau) +  (f^\varepsilon (\mathbf{x}))^{-1} \mathbf{F} (\mathbf{x}, \tau), \qquad \mathbf{x} \in \mathbb{R}^d, \; \tau \in \mathbb{R},\\
& f^\varepsilon (\mathbf{x}) \mathbf{u}_\varepsilon (\mathbf{x}, 0) = \boldsymbol{\phi}(\mathbf{x}), \qquad \mathbf{x} \in \mathbb{R}^d,
\end{aligned}
\right.
\end{equation*}
где $\boldsymbol{\phi} \in L_2 (\mathbb{R}^d; \mathbb{C}^n)$, $ \mathbf{F} \in L_{1, \mathrm{loc}} (\mathbb{R}; L_2 (\mathbb{R}^d; \mathbb{C}^n) )$. Справедливо представление 
\begin{equation*}
\mathbf{u}_\varepsilon (\cdot, \tau) = e^{-i\tau\mathcal{A}_{\varepsilon}} (f^\varepsilon)^{-1} \boldsymbol{\phi} - i \int_{0}^{\tau} e^{-i(\tau-\tilde{\tau})\mathcal{A}_{\varepsilon}} (f^\varepsilon)^{-1} \mathbf{F} (\cdot, \tilde{\tau}) \, d \tilde{\tau}.
\end{equation*}
Пусть $\mathbf{u}_0 (\mathbf{x}, s)$~--- решение \textquotedblleft усреднённой\textquotedblright \ задачи:
\begin{equation*}
\left\{
\begin{aligned}
&i\frac{\partial \mathbf{u}_\varepsilon (\mathbf{x}, \tau)}{\partial \tau} = f_0 b(\mathbf{D})^* g^0 b(\mathbf{D}) f_0 \mathbf{u}_\varepsilon (\mathbf{x}, \tau) +  f_0^{-1} \mathbf{F} (\mathbf{x}, \tau), \qquad \mathbf{x} \in \mathbb{R}^d, \; \tau \in \mathbb{R},\\
& f_0 \mathbf{u}_\varepsilon (\mathbf{x}, 0) = \boldsymbol{\phi}(\mathbf{x}), \qquad \mathbf{x} \in \mathbb{R}^d.
\end{aligned}
\right.
\end{equation*}
Тогда 
\begin{equation*}
\mathbf{u}_0 (\cdot, \tau) = e^{-i\tau\mathcal{A}^0} f_0^{-1} \boldsymbol{\phi} - i \int_{0}^{\tau} e^{-i(\tau-\tilde{\tau})\mathcal{A}^0} f_0^{-1} \mathbf{F} (\cdot, \tilde{\tau}) \, d \tilde{\tau}.
\end{equation*}

Из теоремы~\ref{sndw_A_eps_exp_general_thrm} непосредственно вытекает следующий результат (доказанный ранее в~\cite[теорема~14.5]{BSu2008}).
\begin{thrm}[\cite{BSu2008}]
	\label{A_eps_Cauchy_general_thrm}
	Пусть $\mathbf{u}_\varepsilon$~--- решение задачи~\emph{(\ref{Cauchy_hatA_eps})} и $\mathbf{u}_0$~--- решение задачи~\emph{(\ref{Cauchy_hatA0})}.
	\begin{enumerate}[label=\emph{\arabic*$^{\circ}.$}, ref=\arabic*$^{\circ}$, leftmargin=2.5\parindent]
		\item Если $\boldsymbol{\phi} \in H^s (\mathbb{R}^d; \mathbb{C}^n)$, $\mathbf{F} \in L_{1,\mathrm{loc}}(\mathbb{R}; H^s (\mathbb{R}^d; \mathbb{C}^n))$, где $0 \le s \le 3$, то при $\tau \in \mathbb{R}$ и $\varepsilon > 0$ выполнена оценка
		\begin{equation*}
		\|f^\varepsilon \mathbf{u}_\varepsilon (\cdot, \tau) - f_0 \mathbf{u}_0 (\cdot, \tau) \|_{L_2(\mathbb{R}^d)} \le \varepsilon^{s/3} (1 +  |\tau|)^{s/3}  \mathfrak{C}_1 (s) \left(\| \boldsymbol{\phi} \|_{H^s(\mathbb{R}^d)} + \|\mathbf{F} \|_{L_1((0,\tau);H^s(\mathbb{R}^d))} \right).
		\end{equation*}
		При дополнительном предположении $\mathbf{F} \in L_p (\mathbb{R}_\pm; H^s (\mathbb{R}^d, \mathbb{C}^n))$, где $p \in [1, \infty]$, и при $\tau = \pm \varepsilon^{-\alpha}$, $0 < \varepsilon \le 1$, $0 < \alpha < s(s + 3/p')^{-1}$ справедлива оценка
		\begin{multline*}
		\| f^\varepsilon \mathbf{u}_\varepsilon (\cdot, \pm \varepsilon^{-\alpha}) - f_0 \mathbf{u}_0 (\cdot, \pm \varepsilon^{-\alpha}) \|_{L_2(\mathbb{R}^d)} \le \\ \le \varepsilon^{s(1-\alpha)/3} \cdot 2^{s/3}\mathfrak{C}_1 (s) \left(\| \boldsymbol{\phi} \|_{H^s(\mathbb{R}^d)} + \varepsilon^{-\alpha/p'} \|\mathbf{F} \|_{L_p(\mathbb{R}_\pm;H^s(\mathbb{R}^d))} \right).
		\end{multline*}
		Величина $\mathfrak{C}_1 (s)$ определена в~теореме \emph{\ref{sndw_A_eps_exp_general_thrm}}. Здесь $p^{-1} + (p')^{-1} = 1$.
		\item Если $\boldsymbol{\phi} \in L_2 (\mathbb{R}^d; \mathbb{C}^n)$ и $ \mathbf{F} \in L_{1, \mathrm{loc}} (\mathbb{R}; L_2 (\mathbb{R}^d; \mathbb{C}^n) )$, то
		\begin{equation*}
		\lim\limits_{\varepsilon \to 0} \|  f^\varepsilon \mathbf{u}_\varepsilon (\cdot, \tau) - f_0 \mathbf{u}_0 (\cdot, \tau) \|_{L_2(\mathbb{R}^d)} = 0, \qquad \tau \in \mathbb{R}.
		\end{equation*}
		При дополнительном предположении $\mathbf{F} \in L_1 (\mathbb{R}_\pm; L_2 (\mathbb{R}^d, \mathbb{C}^n))$ справедливо
		\begin{equation*}
		\lim\limits_{\varepsilon \to 0} \| f^\varepsilon \mathbf{u}_\varepsilon (\cdot, \pm\varepsilon^{-\alpha}) - f_0 \mathbf{u}_0 (\cdot, \pm\varepsilon^{-\alpha}) \|_{L_2(\mathbb{R}^d)} = 0, \qquad 0 < \alpha < 1.
		\end{equation*}
	\end{enumerate}
\end{thrm}

Результаты теоремы~\ref{A_eps_Cauchy_general_thrm} можно усилить при дополнительных предположениях. Применяя теорему~\ref{sndw_A_eps_exp_enchcd_thrm_1}, получаем следующее утверждение.
\begin{thrm}
	Пусть выполнены условия теоремы~\emph{\ref{A_eps_Cauchy_general_thrm}}. Пусть оператор  $\widehat{N}_Q (\boldsymbol{\theta})$ определён в~\emph{(\ref{N_Q(theta)})}. Предположим, что $\widehat{N}_Q (\boldsymbol{\theta}) = 0$ при всех $\boldsymbol{\theta} \in \mathbb{S}^{d-1}$.
	\begin{enumerate}[label=\emph{\arabic*$^{\circ}.$}, ref=\arabic*$^{\circ}$, leftmargin=2.5\parindent]
		\item Если $\boldsymbol{\phi} \in H^s (\mathbb{R}^d; \mathbb{C}^n)$, $\mathbf{F} \in L_{1,\mathrm{loc}}(\mathbb{R}; H^s (\mathbb{R}^d; \mathbb{C}^n))$, где $0 \le s \le 2$, то при $\tau \in \mathbb{R}$ и $\varepsilon > 0$ выполнена оценка
		\begin{equation*}
		\| f^\varepsilon \mathbf{u}_\varepsilon (\cdot, \tau) - f_0 \mathbf{u}_0 (\cdot, \tau) \|_{L_2(\mathbb{R}^d)} \le \varepsilon^{s/2} (1 +  |\tau|^{1/2})^{s/2} \mathfrak{C}_2 (s) \left(\| \boldsymbol{\phi} \|_{H^s(\mathbb{R}^d)} + \|\mathbf{F} \|_{L_1((0,s);H^s(\mathbb{R}^d))} \right).
		\end{equation*}
		При дополнительном предположении $\mathbf{F} \in L_p (\mathbb{R}_\pm; H^s (\mathbb{R}^d, \mathbb{C}^n))$, где $p \in [1, \infty]$, и при $\tau = \pm \varepsilon^{-\alpha}$, $0 < \varepsilon \le 1$, $0 < \alpha < 2s(s + 4/p')^{-1}$ справедлива оценка
		\begin{multline*}
		\| f^\varepsilon \mathbf{u}_\varepsilon (\cdot, \pm \varepsilon^{-\alpha}) - f_0 \mathbf{u}_0 (\cdot, \pm \varepsilon^{-\alpha}) \|_{L_2(\mathbb{R}^d)} \le \\ \le \varepsilon^{s(1-\alpha/2)/2} \cdot 2^{s/2}\mathfrak{C}_2 (s) \left(\| \boldsymbol{\phi} \|_{H^s(\mathbb{R}^d)} + \varepsilon^{-\alpha/p'} \|\mathbf{F} \|_{L_p(\mathbb{R}_\pm;H^s(\mathbb{R}^d))} \right).
		\end{multline*}
		Величина $\mathfrak{C}_2 (s)$ определена в~теореме \emph{\ref{sndw_A_eps_exp_enchcd_thrm_1}}. Здесь $p^{-1} + (p')^{-1} = 1$.
		\item Если $\boldsymbol{\phi} \in L_2 (\mathbb{R}^d; \mathbb{C}^n)$ и $\mathbf{F} \in L_1 (\mathbb{R}_\pm; L_2 (\mathbb{R}^d, \mathbb{C}^n))$, то
		\begin{equation*}
		\lim\limits_{\varepsilon \to 0} \| f^\varepsilon \mathbf{u}_\varepsilon (\cdot, \pm \varepsilon^{-\alpha}) - f_0 \mathbf{u}_0 (\cdot, \pm \varepsilon^{-\alpha}) \|_{L_2(\mathbb{R}^d)} = 0, \qquad 0 < \alpha < 2.
		\end{equation*}
	\end{enumerate}
\end{thrm}

Аналогично, применяя теорему~\ref{sndw_A_eps_exp_enchcd_thrm_2}, получаем следующее утверждение.
\begin{thrm}
	Пусть выполнены условия теоремы~\emph{\ref{A_eps_Cauchy_general_thrm}}. Предположим, что выполнено условие~\emph{\ref{sndw_cond1}} \emph{(}или более сильное условие~\emph{\ref{sndw_cond2})}.
	\begin{enumerate}[label=\emph{\arabic*$^{\circ}.$}, ref=\arabic*$^{\circ}$, leftmargin=2.5\parindent]
		\item	 Если $\boldsymbol{\phi} \in H^s (\mathbb{R}^d; \mathbb{C}^n)$, $\mathbf{F} \in L_{1,\mathrm{loc}}(\mathbb{R}; H^s (\mathbb{R}^d; \mathbb{C}^n))$, где $0 \le s \le 2$, то при $\tau \in \mathbb{R}$ и $\varepsilon > 0$ выполнена оценка
		\begin{equation*}
		\| f^\varepsilon \mathbf{u}_\varepsilon (\cdot, \tau) - f_0 \mathbf{u}_0 (\cdot, \tau) \|_{L_2(\mathbb{R}^d)} \le \varepsilon^{s/2} (1 +  |\tau|^{1/2})^{s/2} \mathfrak{C}_3 (s) \left(\| \boldsymbol{\phi} \|_{H^s(\mathbb{R}^d)} + \|\mathbf{F} \|_{L_1((0,\tau);H^s(\mathbb{R}^d))} \right).
		\end{equation*}
		При дополнительном предположении $\mathbf{F} \in L_p (\mathbb{R}_\pm; H^s (\mathbb{R}^d, \mathbb{C}^n))$, где $p \in [1, \infty]$, и при $\tau = \pm \varepsilon^{-\alpha}$, $0 < \varepsilon \le 1$, $0 < \alpha < 2s(s + 4/p')^{-1}$ справедлива оценка
		\begin{multline*}
		\| f^\varepsilon \mathbf{u}_\varepsilon (\cdot, \pm \varepsilon^{-\alpha}) - f_0 \mathbf{u}_0 (\cdot, \pm \varepsilon^{-\alpha}) \|_{L_2(\mathbb{R}^d)} \le \\ \le \varepsilon^{s(1-\alpha/2)/2} \cdot 2^{s/2} \mathfrak{C}_3 (s) \left(\| \boldsymbol{\phi} \|_{H^s(\mathbb{R}^d)} + \varepsilon^{-\alpha/p'} \|\mathbf{F} \|_{L_p(\mathbb{R}_\pm;H^s(\mathbb{R}^d))} \right).
		\end{multline*}
		Величина $\mathfrak{C}_3 (s)$ определена в~теореме \emph{\ref{sndw_A_eps_exp_enchcd_thrm_2}}. Здесь $p^{-1} + (p')^{-1} = 1$.
		\item Если $\boldsymbol{\phi} \in L_2 (\mathbb{R}^d; \mathbb{C}^n)$ и $\mathbf{F} \in L_1 (\mathbb{R}_\pm; L_2 (\mathbb{R}^d, \mathbb{C}^n))$, то
		\begin{equation*}
		\lim\limits_{\varepsilon \to 0} \| f^\varepsilon \mathbf{u}_\varepsilon (\cdot, \pm \varepsilon^{-\alpha}) - f_0 \mathbf{u}_0 (\cdot, \pm \varepsilon^{-\alpha}) \|_{L_2(\mathbb{R}^d)} = 0, \qquad 0 < \alpha < 2.
		\end{equation*}
	\end{enumerate}
\end{thrm}

Полученные общие результаты можно применить к конкретным уравнениям математической физики (см.~\cite[разделы~15, 16]{Su2017}). Так, для уравнения Шрёдингера с вещественной матрицей $g(\mathbf{x})$ (см.~\cite[п.~15.1]{Su2017}) справедлива теорема~\ref{hatA_eps_Cauchy_enchcd_thrm_1}. Для магнитного уравнения Шрёдингера с малым потенциалом (см.~\cite[п.~15.4]{Su2017}) и для двумерного волнового уравнения Паули (см.~\cite[п.~16.3]{Su2017}) выполнена теорема~\ref{A_eps_Cauchy_general_thrm}. Эти результаты являются точными как в отношении гладкости начальных данных, так и в отношении зависимости оценок от времени.

\end{document}